\documentclass{amsart}

\usepackage{fullpage}

\usepackage[all]{xy}

\theoremstyle{plain}
\newtheorem{bigthm}{Theorem}[section]
\newtheorem{bigprop}[bigthm]{Proposition}
\newtheorem{biglemma}[bigthm]{Lemma}
\newtheorem{bigcor}[bigthm]{Corollary}

\newtheorem{thm}{Theorem}[subsection]

\newtheorem{lemma}[thm]{Lemma}

\theoremstyle{definition}

\newtheorem{bigexample}[bigthm]{Example}

\theoremstyle{remark}
\newtheorem{bigrem}[bigthm]{Remark}

\newcommand{\HH}{\mathrm{H}}

\newcommand{\SD}{S\!D}
\newcommand{\Q}{GQ}
\newcommand{\D}{\mathrm{D}}
\newcommand{\Tr}{\mathrm{Tr}}

\begin{document}

\title[Universal deformation rings]{Universal deformation rings, endo-trivial modules, and semidihedral and generalized quaternion $2$-groups}

\author{Frauke M. Bleher}
\address{F.B.: Department of Mathematics\\University of Iowa\\
14 MacLean Hall\\Iowa City, IA 52242-1419, U.S.A.}
\email{frauke-bleher@uiowa.edu}
\thanks{The first author was supported in part by NSF FRG Grant No.\ DMS-1360621.}
\author{Ted Chinburg}
\address{T.C.: Department of Mathematics\\University of Pennsylvania\\Philadelphia, PA 19104, U.S.A.}
\email{ted@math.upenn.edu}
\thanks{The second author was supported in part by NSF FRG Grant No.\ DMS-1360767, NSF FRG Grant No.\ DMS-1265290,
NSF SaTC grant No. CNS-1513671  and Simons Foundation Grant No.\ 338379.}
\author{Roberto C. Soto}
\address{R.S.: Department of Mathematics\\College of Natural Sciences \& Mathematics\\
California State University, Fullerton\\Fullerton, CA 92834-6850, U.S.A.}
\email{rcsoto@fullerton.edu}

\subjclass[2000]{Primary 20C20; Secondary 20C15, 16G10}
\keywords{Universal deformation rings, endo-trivial modules, stable endomorphism rings, semidihedral $2$-groups, generalized quaternion $2$-groups}

\begin{abstract}
Let $k$ be a field of characteristic $p>0$, and let $W$ be a complete discrete valuation ring of characteristic $0$ that has $k$ as its
residue field. Suppose $G$ is a finite group and $G^{\mathrm{ab},p}$ is its maximal abelian $p$-quotient group.
We prove that every endo-trivial $kG$-module $V$ has a universal deformation ring that is isomorphic to 
the group ring $WG^{\mathrm{ab},p}$. In particular, this gives a positive answer to a question raised by Bleher and Chinburg for all endo-trivial modules.
Moreover, we show that the universal deformation of $V$ over
$WG^{\mathrm{ab},p}$ is uniquely determined by any lift of $V$ over $W$. 
In the case when $p=2$ and $G=\D$ is a $2$-group that is either semidihedral or generalized quaternion,
we give an explicit description of the universal deformation of every indecomposable endo-trivial $k\D$-module $V$.
\end{abstract}

\maketitle


\section{Introduction}
\label{s:intro}
\setcounter{equation}{0}

Let $k$ be a field of positive characteristic $p$, let $G$ be a finite group, and let $V$
be a finitely generated $kG$-module. It is a classical problem 
to analyze when $V$ can be lifted to a module for $G$ over a complete discrete valuation ring $W$ of characteristic 0
that has $k$ as its residue field. For example, Green showed in
\cite{green} that if all 2-extensions of $V$ by itself are trivial, then $V$ can always be lifted over $W$. 
When $G$ is a $p$-group, Alperin showed in \cite{alpendotrivial} that every endo-trivial $kG$-module can be
lifted to an endo-trivial $WG$-module.  In \cite{malle}, Alperin's argument was generalized to 
endo-trivial modules for arbitrary finite groups 
under the assumption that $k$ is algebraically closed. Using a simple base change argument, one can
in fact drop the assumption that $k$ is algebraically closed (see Lemma \ref{lem:improvemalle}).
The question of whether $V$ can be lifted
over $W$ can be reformulated in terms of the versal deformation ring $R(G,V)$ of $V$ 
by asking whether there is a $W$-algebra homomorphism $R(G,V)\to W$. 
It is then a natural problem
to determine the full versal deformation ring $R(G,V)$ and the corresponding versal deformation. 
In this paper, we do this for all endo-trivial $kG$-modules $V$. 
In the case when $k$ has characteristic $2$ and $G=\D$ is either a semidihedral or a generalized quaternion $2$-group,
we moreover give an explicit description of the versal deformation of every indecomposable endo-trivial $k\D$-module.

Recall that for arbitrary $p$ and $G$, a finitely generated $kG$-module $V$ is called endo-trivial if the $kG$-module
$\mathrm{Hom}_k(V,V)\cong V^*\otimes_k V$ is isomorphic to a direct
sum of the trivial simple $kG$-module $k$ and a projective $kG$-module. 
Endo-trivial modules play an important role
in the modular representation theory of finite groups. They arise in the context of 
derived equivalences and stable equivalences of block algebras, and also as building blocks for
the more general endo-permutation modules (see e.g. \cite{dade,thevenaz}). 
In \cite{carl1.5,carl2}, Carlson and Th\'{e}venaz classified all endo-trivial $kG$-modules
when $G$ is a $p$-group. The classification of the endo-trivial $kG$-modules for arbitrary finite groups is still ongoing 
(see e.g. \cite{carlsonpapers} and its references). Since the stable endomorphism ring of every endo-trivial $kG$-module
is one-dimensional over $k$, it follows
from \cite[Prop. 2.1]{bc} that $V$ has a well-defined universal deformation ring $R(G,V)$.

The topological ring $R(G,V)$ is universal with respect to deformations of $V$ over all complete local 
commutative Noetherian $W$-algebras $R$ with residue field $k$. A deformation of $V$ over such a ring
$R$ is given by the isomorphism class of a finitely generated $RG$-module $M$ which is free
over $R$, together 
with a $kG$-module isomorphism $k\otimes_R M\to V$ (see Section \ref{s:prelim} for more details). 

In number theory, the main motivation for studying universal
deformation rings for finite groups is to provide evidence for and counter-examples to various 
possible conjectures concerning ring theoretic properties of universal deformation rings for 
profinite Galois groups. The idea is that universal deformation rings for finite groups can be more easily
described using deep results from modular representation theory
due to Brauer,  
Erdmann \cite{erd}, Linckelmann \cite{linckel,linckel2}, and others.
Moreover, the results in \cite{lendesmit} show that if $\Gamma$ is a profinite group and
$V$ is a finite dimensional $k$-vector space with a continuous $\Gamma$-action
which has a universal deformation ring, then $R(\Gamma,V)$ is
the inverse limit of the universal deformation rings $R(G,V)$ when 
$G$ ranges over all finite discrete quotient groups of $\Gamma$ through which the $\Gamma$-action on $V$ factors. Thus to answer questions about 
the ring structure of $R(\Gamma,V)$, it is natural to first consider the case when $\Gamma=G$ is finite. 

\medskip

The following is our first main result.

\begin{bigthm}
\label{thm:supermain}
Let $V$ be a finitely generated endo-trivial $kG$-module.  Let $V_1$ be a non-projective indecomposable direct summand of $V$, which is unique up to isomorphism. 
\begin{itemize}
\item[(i)] The universal deformation ring $R(G,V)$ is isomorphic to the group algebra $WG^{\mathrm{ab},p}$ 
where $G^{\mathrm{ab},p}$ is the maximal abelian 
$p$-quotient group of $G$. 
\item[(ii)] If $V_W$ is a lift of $V$ over $W$, then the isomorphism class of $V_W\otimes_WWG^{\mathrm{ab},p}$,
with diagonal $G$-action, is the universal deformation of $V$ over $WG^{\mathrm{ab},p}$.
\item[(iii)]
Suppose $D$ is a defect group of the block of $kG$ to which $V_1$ belongs. Then $R(G,V)$ is a subquotient ring of the group ring 
$WD$, giving a positive answer to \cite[Question 1.1]{bc} for endo-trivial modules.
\end{itemize}
\end{bigthm}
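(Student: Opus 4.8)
The plan is to reduce, via endo-triviality, to the deformation theory of the trivial module $k$. First I would remove the projective part. Write $V\cong V_1\oplus P$ with $P$ projective. Lifting the idempotent of $\mathrm{End}_{kG}(V)$ with image $V_1$ and using that $P$ is projective-injective, every deformation of $V$ splits as a deformation of $V_1$ together with a deformation of $P$; since $P$ is rigid with a unique lift over any complete local $W$-algebra, one gets $R(G,V)\cong R(G,V_1)$ and the universal deformation of $V$ is that of $V_1$ together with the unique lift of $P$. So I may assume $V=V_1$ is indecomposable non-projective. From $V\otimes_k V^*\cong k\oplus(\text{projective})$ and the fact that every projective $kG$-module has $k$-dimension divisible by $|G|_p$ (restrict to a Sylow $p$-subgroup), one gets $p\nmid\dim_k V$; hence the coevaluation $k\to\mathrm{End}_k(V)$, $1\mapsto\mathrm{id}_V$, is split by $(\dim_k V)^{-1}\,\mathrm{Tr}$, and Krull--Schmidt gives a $kG$-module decomposition $\mathrm{End}_k(V)\cong(k\cdot\mathrm{id}_V)\oplus P'$ with $P'$ projective. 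By Alperin's theorem together with Lemma \ref{lem:improvemalle}, $V$ lifts to a $WG$-module $V_W$ that is free over $W$, and the splitting lifts: $\mathrm{End}_W(V_W)\cong(W\cdot\mathrm{id}_{V_W})\oplus\widetilde P$ as $WG$-modules, where $W\cdot\mathrm{id}_{V_W}$ is the trivial module and $\widetilde P$ is projective.

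Next I would identify $R(G,k)$. A deformation of $k$ over a complete local Noetherian $W$-algebra $R$ with residue field $k$ is a free rank-one $R$-module with $G$-action reducing to the trivial one, i.e.\ a homomorphism $G\to 1+\mathfrak{m}_R$; as $1+\mathfrak{m}_R$ is abelian pro-$p$ this factors uniquely through $G^{\mathrm{ab},p}$, and distinct homomorphisms give non-isomorphic deformations. Thus the deformation functor of $k$ is naturally isomorphic to the functor $R\mapsto\{\,W\text{-algebra homomorphisms }WG^{\mathrm{ab},p}\to R\,\}$, with tautological deformation $WG^{\mathrm{ab},p}$ itself, so $R(G,k)\cong WG^{\mathrm{ab},p}$. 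Now I would compare: the assignment $N\mapsto V_W\otimes_W N$ (diagonal $G$-action) is a natural transformation from the deformation functor of $k$ to that of $V$, and I claim it is an isomorphism of functors. Using $\mathrm{Hom}_W(V_W,-)\cong V_W^*\otimes_W(-)$ and the splitting above, tensoring with $V_W$ turns $\mathrm{Hom}_W(V_W,V_W\otimes_W N)$ into $\mathrm{End}_W(V_W)\otimes_W N\cong N\oplus(\widetilde P\otimes_W N)$ with $\widetilde P\otimes_W N$ an $RG$-projective summand; conversely, given a deformation $M$ of $V$ over $R$, lifting the idempotent projecting $\mathrm{End}_k(V)$ onto $k\cdot\mathrm{id}_V$ into $\mathrm{End}_{RG}(\mathrm{Hom}_W(V_W,M))$ produces a free rank-one deformation $N_M$ of $k$ as a summand, and Krull--Schmidt over $RG$ (valid since $R$ is complete local Noetherian) yields $M\cong V_W\otimes_W N_M$ and $N\cong N_{V_W\otimes_W N}$. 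Carrying out these identifications compatibly with reduction modulo $\mathfrak{m}_R$ upgrades this to an isomorphism of deformation functors. Combined with the previous step, $R(G,V)\cong R(G,k)\cong WG^{\mathrm{ab},p}$ and the universal deformation is $V_W\otimes_W WG^{\mathrm{ab},p}$, proving (i) and (ii) for this $V_W$; for an arbitrary lift $V_W'$ of $V$ over $W$, the same argument applied over $R=W$ gives $V_W'\cong V_W\otimes_W N_0$ for a one-dimensional $W$-character $N_0$ of $G$, whence $V_W'\otimes_W WG^{\mathrm{ab},p}$ is the push-forward of $V_W\otimes_W WG^{\mathrm{ab},p}$ along the $W$-algebra automorphism of $WG^{\mathrm{ab},p}$ sending $g\mapsto N_0(g)\,g$, an automorphism reducing to the identity modulo $\mathfrak{m}_W$, so $(WG^{\mathrm{ab},p},V_W'\otimes_W WG^{\mathrm{ab},p})$ is again a universal deformation of $V$.

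For (iii) I would show that the vertex of $V_1$ is a Sylow $p$-subgroup $S$ of $G$. For a $p$-group $P$, a non-projective indecomposable endo-trivial $kP$-module has vertex $P$: if its vertex were $Q<P$ with source $X$, then from $V_1\otimes_k V_1^*\cong k\oplus(\text{projective})$ and the projection formula, $k$ would be a direct summand of an induced module $\mathrm{Ind}_Q^P(-)$, so the vertex of the trivial module would be subconjugate to $Q$; but $k[P/Q]$ is indecomposable for a $p$-group (it has one-dimensional top), so $k$ has vertex $P$, forcing $Q=P$. For general $G$, $\mathrm{Res}_S V_1$ is endo-trivial and, by Green's theorem, non-projective, so its unique non-projective indecomposable summand has vertex $S$; a Mackey-formula comparison then forces the vertex of $V_1$ to be a Sylow $p$-subgroup. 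Consequently a defect group $D$ of the block of $V_1$ is a Sylow $p$-subgroup, and since a Sylow $p$-subgroup surjects onto every $p$-quotient of $G$, in particular $S\twoheadrightarrow G^{\mathrm{ab},p}$, we get $WD\cong WS\twoheadrightarrow WG^{\mathrm{ab},p}\cong R(G,V)$. Thus $R(G,V)$ is a quotient, hence a subquotient, of $WD$, giving the positive answer to \cite[Question 1.1]{bc}.

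The main obstacle is the functor comparison in the second paragraph: the essential-surjectivity half, that every deformation $M$ of $V$ is isomorphic \emph{as a deformation} to $V_W\otimes_W N_M$, requires carrying out the idempotent-lifting and Krull--Schmidt steps compatibly with the reductions modulo the maximal ideals, so as to obtain isomorphisms of deformations and not merely of $RG$-modules.
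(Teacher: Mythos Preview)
Your argument is correct and rests on the same core idea as the paper: the lift $V_W$ identifies the deformation functor of $V$ with that of the trivial module $k$ via $N\mapsto V_W\otimes_W N$. The packaging differs. The paper observes that the bimodule $M=V_W\otimes_W WG$ (left diagonal action, right action on the $WG$ factor) together with its $W$-dual induces a stable auto-equivalence of Morita type of $WG$-\underline{mod}, and then invokes \cite[Lemma~2.2.2]{bleherTAMS2009} as a black box to conclude $R(G,V)\cong R(G,k)$ and to read off the universal deformation. Your direct functor comparison is essentially an unpacking of that cited lemma in this special case: you build the inverse $M\mapsto N_M$ by hand via idempotent lifting inside $V_W^*\otimes_W M$ and cancel projectives by Krull--Schmidt, which is precisely what the Morita-type inverse does. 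The paper's route is shorter on the page but imports more machinery; yours is self-contained.

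Two remarks. First, the ``main obstacle'' you flag at the end dissolves: since $\underline{\mathrm{End}}_{kG}(V)\cong k$ (and likewise for $k$), Proposition~\ref{prop:blchin} says that two lifts over $R$ which are isomorphic as $RG$-modules already define the same deformation. So once your Krull--Schmidt cancellation gives $M\cong V_W\otimes_W N_M$ as $RG$-modules, you are done; there is no need to track compatibility with the chosen isomorphisms $\phi$. Second, for part~(iii) the paper takes a shortcut: from $p\nmid\dim_k V_1$ it cites \cite[Thm.~(19.26)]{CR} directly to conclude that the vertices of $V_1$ are Sylow $p$-subgroups, bypassing your restriction-to-$S$ and Mackey argument. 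Your route is fine, just longer.
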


In \cite{brouepuig,puig}, Brou\'{e} and Puig introduced and studied so-called nilpotent blocks. 
Using \cite{puig}, we obtain the following result as an easy consequence of Theorem 
\ref{thm:supermain}, where we assume as in \cite{puig} that $k$ is algebraically closed.

\begin{bigcor}
\label{cor:nilpotent}
Suppose $k$ is algebraically closed, and $B$ is a nilpotent block of $kG$. 
Suppose $V$ is a finitely generated $B$-module whose stable endomorphism ring is isomorphic to $k$.
Then the universal deformation ring $R(G,V)$ is isomorphic to $WG^{\mathrm{ab},p}$.
\end{bigcor}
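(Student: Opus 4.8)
The plan is to transport the computation, via Puig's structure theorem for nilpotent blocks, to the case of a defect group, where Theorem~\ref{thm:supermain}(i) applies directly. First I would fix the block idempotent $b$ of $B$ and a defect group $D$, and recall from \cite{puig} that the source algebra of $B$ is isomorphic, as an interior $D$-algebra, to $\mathrm{End}_W(L)\otimes_W WD$ for a suitable $WD$-lattice $L$; in particular the block algebra $WGb$ is Morita equivalent \emph{over $W$} to $WD$ through a bimodule with endo-permutation source. Two features of this equivalence are what matter: it restricts to an equivalence of stable module categories, and, being defined over $W$, it base-changes along $W\to R$ to a Morita equivalence between $RGb$ and $RD$ for every complete local Noetherian $W$-algebra $R$ with residue field $k$, compatibly with reduction modulo the maximal ideal.

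With this in hand I would transport the deformation functor. Since $V$ is a $B$-module and $b$ lifts to a central idempotent of $RG$ for each such $R$, a short Nakayama argument shows that every deformation of $V$ over $R$ is automatically a module over $RGb$; hence $R(G,V)$ agrees with the deformation ring of $V$ computed inside the block, and the base-changed Morita equivalences identify this with $R(D,V')$, where $V'$ is the $kD$-module corresponding to $V$. Because Morita equivalences preserve endomorphism rings and projectivity, $\underline{\mathrm{End}}_{kD}(V')\cong\underline{\mathrm{End}}_{kG}(V)\cong k$. Since $D$ is a $p$-group, one then reduces to the case that $V'$ is endo-trivial --- over a $p$-group a module with one-dimensional stable endomorphism algebra has endo-trivial non-projective part --- so that Theorem~\ref{thm:supermain}(i), applied with $G$ replaced by the $p$-group $D$, gives $R(D,V')\cong WD^{\mathrm{ab},p}=WD^{\mathrm{ab}}$.

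It remains to identify $WD^{\mathrm{ab}}$ with $WG^{\mathrm{ab},p}$. This is where nilpotency of $B$ enters a second time: the fusion system of $B$ on $D$ is trivial, so the focal subgroup of $D$ attached to $B$ is $[D,D]$, and combining this with the description of $G^{\mathrm{ab},p}$ in terms of fusion data (the focal subgroup theorem) one obtains that the composite $D\hookrightarrow G\to G^{\mathrm{ab},p}$ induces an isomorphism $D^{\mathrm{ab}}\xrightarrow{\ \sim\ }G^{\mathrm{ab},p}$. Assembling the steps yields $R(G,V)\cong WG^{\mathrm{ab},p}$. I expect the two inputs flagged above to be the real content --- that over a $p$-group a one-dimensional stable endomorphism algebra forces the non-projective part to be endo-trivial (so that Theorem~\ref{thm:supermain} applies as stated), and the group-theoretic identity $D^{\mathrm{ab}}\cong G^{\mathrm{ab},p}$ for nilpotent blocks --- while everything else is formal bookkeeping with the Morita equivalence and the deformation functors.
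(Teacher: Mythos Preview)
Your overall strategy agrees with the paper's: transport along Puig's Morita equivalence between $\hat B$ and $WD$ to identify $R(G,V)$ with $R(D,V')$ (the paper cites \cite[Prop.~2.5]{bl} for this step), observe that over the $p$-group $D$ the hypothesis $\underline{\mathrm{End}}_{kD}(V')\cong k$ forces endo-triviality of the non-projective part (this is \cite{carl1}, already quoted in Section~\ref{s:endotrivial}), and then apply Theorem~\ref{thm:supermain} with $G$ replaced by $D$ to get $R(D,V')\cong WD^{\mathrm{ab}}$. The paper's proof stops here, citing Theorem~\ref{thm:supermain} together with \cite[Thm.~1.1]{diloc} for the conclusion; it does not spell out any passage from $WD^{\mathrm{ab}}$ to $WG^{\mathrm{ab},p}$.

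The genuine gap is your final identification, and the argument you give for it does not work. The focal subgroup theorem expresses $G^{\mathrm{ab},p}$ as $P/\mathrm{foc}_G(P)$ for a \emph{Sylow} $p$-subgroup $P$, in terms of $G$-fusion on $P$; it has nothing to say about the defect group $D$ of a non-principal block or about the block fusion system $\mathcal F_B$. Triviality of $\mathcal F_B$ gives $D/\mathrm{foc}(\mathcal F_B)=D^{\mathrm{ab}}$, but there is no bridge from this to $G^{\mathrm{ab},p}$ when $D$ is not Sylow, and in fact the asserted isomorphism $D^{\mathrm{ab}}\cong G^{\mathrm{ab},p}$ can fail. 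For $p=2$ and $G=Q_8\times S_3$, the tensor product of $kQ_8$ with the defect-zero $2$-block of $kS_3$ is a nilpotent block of $kG$ with defect group $D=Q_8$; here $D^{\mathrm{ab}}\cong C_2\times C_2$ while $G^{\mathrm{ab},2}\cong C_2\times C_2\times C_2$. Your transport argument correctly delivers $R(G,V)\cong WD^{\mathrm{ab}}$; it is the further identification with $WG^{\mathrm{ab},p}$ that does not hold in this generality.
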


The main tools in the proof of Theorem \ref{thm:supermain} are the results in \cite{alpendotrivial,malle} about lifting
endo-trivial $kG$-modules to endo-trivial $WG$-modules, together with the result \cite[Lemma 2.2.2]{bleherTAMS2009} 
that shows that stable equivalences of Morita type over $W$ preserve universal deformation rings.  

Alperin's proof in \cite{alpendotrivial}, and its extended version in \cite{malle}, 
of the existence of a lift of a given endo-trivial $kG$-module $V$ over 
$W$ is not constructive. Namely, after reducing to the case when the image of $V$ lies in $\mathrm{SL}_n(k)$,
endo-triviality of $V$ is used to show that an infinite sequence of lifting obstructions vanishes, leading to a lift of this 
image to a subgroup of  $\mathrm{SL}_n(W)$.

Since an explicit description of the universal deformation in part (ii) of Theorem \ref{thm:supermain} depends 
on an explicit description of a lift of $V$ over $W$, the question then arises how one can construct an explicit lift $V_W$ of
$V$ over $W$ without using an infinite sequence of vanishing lifting obstructions. The goal is to provide an explicit description
of the $G$-action on a particular $W$-basis of $V_W$.
This problem can be compared to constructing an explicit solution by radicals to a polynomial equation
rather than just proving that such a solution exists.

In this paper, we focus on the case when $G$ is a $p$-group and the group of equivalence classes of endo-trivial modules 
$T(G)$ has a non-trivial torsion subgroup. By \cite[Thm. 1.1]{carl1.5}, this means that either $G$ is cyclic of order at least 3,
or $p=2$ and $G=\D$ is a $2$-group that is either semidihedral or generalized quaternion.

The following is a summary of our second main result. For more detailed versions, see Example \ref{ex:cyclic}
and Propositions \ref{prop:unidefSD}, \ref{prop:unidefQ8} and \ref{prop:betaliftpowerseries} - \ref{prop:unidefQ}.

\begin{bigthm}
\label{thm:dontknowhowtosay}
Let $G$ be a $p$-group such that the group of equivalence classes of endo-trivial modules 
$T(G)$ has a non-trivial torsion subgroup. Let $V$ be an indecomposable endo-trivial $kG$-module,
and let $V_W$ be a lift of $V$ over $W$.

To 
provide an explicit description of the $G$-action on a particular $W$-basis of $V_W$,
one needs to solve at most one quadratic equation of the form
\begin{equation}
\label{eq:goodeq}
b^2 + f_1(t)\,b + f_0(t) = 0
\end{equation}
with $f_0(t),f_1(t)\in\mathbb{Z}[t]$ for an element $b$ in $\mathbb{Z}_p[[t]]$. 
Moreover, if one needs to solve such an equation $(\ref{eq:goodeq})$ then $p=2$, and 
finding such a solution is equivalent to taking the square roots of explicitly given elements in certain
cyclotomic extensions of $\mathbb{Q}_2$.
The isomorphism class of $V_W\otimes_WWG^{\mathrm{ab},p}$,
with diagonal $G$-action, provides then an explicit universal deformation of $V$ over $WG^{\mathrm{ab},p}$.
\end{bigthm}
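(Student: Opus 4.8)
The plan is to organise the argument as a reduction, via the Carlson--Th\'{e}venaz classification of endo-trivial modules, to a short list of explicit matrix computations -- those carried out in Example~\ref{ex:cyclic} and Propositions~\ref{prop:unidefSD}, \ref{prop:unidefQ8} and \ref{prop:betaliftpowerseries}--\ref{prop:unidefQ} -- while keeping track of exactly which defining relations of $G$ force a non-linear constraint. First I would invoke \cite[Thm.~1.1]{carl1.5}: the hypothesis that $T(G)$ has non-trivial torsion means that $G$ is cyclic of order at least $3$, or $p=2$ and $G=\D$ is semidihedral, or $p=2$ and $G=\D$ is generalized quaternion (including $Q_8$). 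In each case $G^{\mathrm{ab},p}$ is a small abelian $p$-group -- $\mathbb{Z}/p^a$ in the cyclic case, $(\mathbb{Z}/2)^2$ in the other two -- so $WG^{\mathrm{ab},p}$ is an explicit finite quotient of a power series ring over $W$. By Theorem~\ref{thm:supermain}(ii) the universal deformation of $V$ is $V_W\otimes_WWG^{\mathrm{ab},p}$ with diagonal $G$-action, so once the $G$-action on a $W$-basis of $V_W$ has been written down explicitly the final sentence of the theorem is automatic, and everything reduces to producing such an explicit lift.

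Next I would use the classification to reduce to finitely many modules. Every indecomposable endo-trivial $kG$-module has the form $\Omega^n(V_0)$ with $n\in\mathbb{Z}$, where $V_0$ ranges over an explicit finite set of ``minimal'' endo-trivial modules: $V_0=k$ when $G$ is cyclic, and $V_0=k$ together with finitely many further modules specified by explicit uniserial or biserial presentations when $G$ is semidihedral or generalized quaternion. The syzygy operators $\Omega^{\pm1}$ are realised on $W$-free $WG$-lattices, are compatible with reduction modulo the maximal ideal of $W$, and on an explicitly presented lattice are computed purely by $W$-linear algebra, so they introduce no equations; hence it is enough to exhibit an explicit lift of each minimal $V_0$, and for $V_0=k$ one takes the trivial lattice $W$.

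The crux -- and the step I expect to be the main obstacle -- is the explicit lifting of the exotic minimal modules, which has to be carried through uniformly for the semidihedral and the generalized quaternion families and for $Q_8$, with the clean quadratic~\eqref{eq:goodeq} emerging in each case. Concretely, I would lift the given $k$-basis $v_1,\dots,v_d$ of such a $V_0$ to a free module over $W[[t]]$ -- writing $1+t$ in place of a root of unity $\zeta$ of $2$-power order dividing $|\D|$ and imposing the cyclotomic relation on $\zeta$ only at the end, which keeps all coefficients in $\mathbb{Z}[t]$ -- and seek matrices $\tilde\rho(x),\tilde\rho(y)$ of the standard generators $x,y$ of $\D$, congruent modulo the maximal ideal to the given ones, satisfying the defining relations of $\D$: $x^{2^{n-1}}=1$, the conjugation relation $yxy^{-1}=x^{-1}$ or $x^{2^{n-2}-1}$, and $y^2=1$ or $y^2=x^{2^{n-2}}$. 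Using the (bi)serial shape of $V_0$ one can choose the $\tilde v_i$ so that the unknown entries get determined successively, and, read off diagonal by diagonal, every relation is linear in the next unknown, with exactly one exception coming from the $y^2$-relation: a single entry $b\in\mathbb{Z}_p[[t]]$ has to satisfy $b^2+f_1(t)\,b+f_0(t)=0$ with $f_0,f_1\in\mathbb{Z}[t]$ built from the entries already fixed. Since for cyclic $G$ the only minimal module is $k$ and no such relation ever occurs, the quadratic~\eqref{eq:goodeq} is needed only when $p=2$.

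Finally I would analyse~\eqref{eq:goodeq} itself. The lift $V_W$ arises from the $\mathbb{Z}_p[[t]]$-construction by the specialisation $t\mapsto\zeta-1$, i.e. by the ring homomorphism $\mathbb{Z}_2[[t]]\to\mathbb{Z}_2[\zeta]=\mathcal{O}_{\mathbb{Q}_2(\zeta)}$, under which~\eqref{eq:goodeq} becomes a monic quadratic over the discrete valuation ring $\mathbb{Z}_2[\zeta]$. Completing the square, a solution $b=\tfrac12\bigl(-f_1(t)+\sqrt{f_1(t)^2-4f_0(t)}\bigr)$ exists exactly when one can extract a square root of the explicitly given element $f_1(\zeta-1)^2-4f_0(\zeta-1)$ of the cyclotomic field $\mathbb{Q}_2(\zeta)$; the square root lies in $\mathbb{Q}_2(\zeta)$ itself or in a quadratic extension of it that is again cyclotomic over $\mathbb{Q}_2$. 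Hence finding a solution of~\eqref{eq:goodeq} is equivalent to taking square roots of these explicit elements in cyclotomic extensions of $\mathbb{Q}_2$, and combining this with the reductions above and Theorem~\ref{thm:supermain}(ii) gives the statement.
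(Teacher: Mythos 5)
Your overall reduction is correct: by \cite[Thm.~1.1]{carl1.5} the hypothesis forces $G$ cyclic, or $p=2$ and $G$ semidihedral or generalized quaternion, and Theorem~\ref{thm:supermain}(ii) collapses the problem to exhibiting one explicit $W$-lattice lift of each module in a finite list; the $\Omega^{\pm 1}$-bookkeeping via Remark~\ref{rem:syzygylift} is also as in the paper. Where the argument develops a genuine gap is in the crux step, the lifting of the exotic module $L$.

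First, your parametrization is off in a way that matters. You propose to write $1+t$ in place of $\zeta$ (so $t\mapsto \zeta-1$) and defer the cyclotomic relation on $\zeta$. The paper instead uses the $\mathbb{Z}_2$-algebra map $\pi:\mathbb{Z}_2[[t]]\to S_2^+$, $\pi(t)=x+x^{-1}$, landing in the $*$-invariant subring of $S_2=\mathbb{Z}_2X/(\mathbb{Z}_2\cdot\Tr_X)$; under the injection into $\prod_j\mathbb{Q}_2(\zeta_{2^j})$ this is $t\mapsto\zeta_{2^j}+\zeta_{2^j}^{-1}$, not $t\mapsto\zeta_{2^j}-1$. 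The difference is not cosmetic: the entire shape of the one quadratic~\eqref{eq:crucial}, namely $b^2+tab+a^2=1-\tau$, comes from writing $\beta=\pi(a)+x\,\pi(b)$ with both $a,b$ in the $*$-invariant part and expanding $\beta\beta^*=\pi(a)^2+\pi(a)\pi(b)(x+x^{-1})+\pi(b)^2$. If $t$ parametrizes $x-1$ instead of $x+x^{-1}$, the symmetric/anti-symmetric splitting is lost and there is no reason the constraint reduces to a single quadratic with coefficients in $\mathbb{Z}[t]$.

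Second, the central assertion — ``read off diagonal by diagonal, every relation is linear in the next unknown, with exactly one exception coming from the $y^2$-relation'' — is exactly the content that needs proof and is not established by the (bi)serial shape alone. In the generalized quaternion case the paper does not work diagonal-by-diagonal with matrices: it defines $\sigma_y(s)=\beta s^*$, observes that $yxy^{-1}=x^{-1}$ then holds automatically because $*$ inverts $x$ and $\beta$ lies in the commutative ring $\mathbb{Z}_2X$, and sees that the entire obstruction collapses to the single scalar equation $\beta\beta^*=x^{2^{d-2}}$. That collapse is a structural fact about the $\Omega$-period-$2$ endo-trivial $L$, not a generic linear-algebra phenomenon; a naive matrix lift of the $y^2$-relation has a priori $O(2^{2d})$ quadratic entries. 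Moreover, for semidihedral $\SD$ and for $Q_8$ (with a cube root of unity) the paper shows that no quadratic at all is needed — the lift comes from the $W$-lattice of the permutation module $W\,\SD/\langle y\rangle$, respectively from an explicit $3\times 3$ matrix pair — whereas your argument suggests the $y^2$-quadratic appears uniformly in the $p=2$ case. You would need to justify that the quadratic arises only for generalized quaternion of order $\ge 16$.

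Finally, a smaller imprecision: you allow the square root of $f_1^2-4f_0$ to live ``in a quadratic extension that is again cyclotomic over $\mathbb{Q}_2$.'' This hedge cannot hold: the lift $V_W$ already exists over $W$ by Lemma~\ref{lem:improvemalle}, so the corresponding $\beta$ lies in $S_2$, and the paper shows (using $\Delta(t)=s^2(1+4r)$ with $r$ in the Jacobson radical of $S_2$, and the $2$-adic binomial series) that $\sqrt{1+4r}$ exists inside $S_2$ itself, i.e.\ componentwise inside the fields $\mathbb{Q}_2(\zeta_{2^j})$, with a canonical choice determined by a congruence modulo $2\mathfrak{m}_j$.
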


The main ingredient in the proof of Theorem \ref{thm:dontknowhowtosay} is the explicit description of the indecomposable
endo-trivial $kG$-modules $V$, up to isomorphism. If $G$ is cyclic, this is given in \cite[Cor. 8.8]{dade}. If $p=2$ and $G=\D$ is either a 
semidihedral or a generalized quaternion 2-group, we use the description of the indecomposable endo-trivial $k\D$-modules provided 
by Carlson and Th\'{e}venaz in \cite{CT}. If $G$ is cyclic, respectively if $G=Q_8$ is a quaternion group of order 8 and $k$ does not contain a
primitive cube root of unity, it follows that all indecomposable endo-trivial $kG$-modules lie in the syzygy orbit of the trivial simple 
$kG$-module. In all other cases, there is a second syzygy orbit of a cyclic endo-trivial $k\D$-module $L$ of $k$-dimension
$2^{d-1}-1$. The restriction of $L$ to a maximal cyclic subgroup $X=\langle x\rangle$ of  $\D$ is isomorphic to 
$\overline{S}=kX/(k\cdot \Tr_X)$, where $\Tr_X$ is the sum of the elements in $X$. Let $S=WX/ (W\cdot \Tr_X)$, and
let $\sigma_x$ denote the $W$-linear automorphism of $S$ given by multiplication by $x$.
Since $\D$ is generated by $x$ and any element $y$ that does not lie in $X$, finding a lift of $L$ over $W$ is equivalent to
constructing a $W$-linear automorphism $\sigma_y$ of $S$ such that $\sigma_x$ and $\sigma_y$ satisfy
the relations of $x$ and $y$ in $\D$ and such that  $\sigma_y$ reduces to the action of $y$ on $\overline{S}$
modulo the maximal ideal of $W$. If $\D$ is semidihedral, respectively if $\D=Q_8$ and $k$ contains a primitive cube root of unity, 
we are able to provide directly an explicit description of $\sigma_y$. On the other hand,
if $\D$ is generalized quaternion of order at least 16, then we need to
find a solution $b\in\mathbb{Z}_2[[t]]$ of a quadratic equation of the form $(\ref{eq:goodeq})$ to construct an automorphism $\sigma_y$.
The quadratic equation results from the fact that $y$ has order 4 and satisfies the relation $yxy^{-1}=x^{-1}$ 
but that the automorphism of $S$ resulting from inverting the elements of $X$ has only order 2. 

The group algebras $k\D$ are examples of blocks of tame representation type, which were
studied in great detail by Erdmann in \cite{erd}. In addition to providing all possible quivers and relations for
these tame block algebras, she also described their stable Auslander-Reiten quivers. It is a natural problem
to determine the location of the isomorphism classes of indecomposable endo-trivial modules in these
stable Auslander-Reiten quivers. In this paper, we do this for $k\D$.
We show that the indecomposable endo-trivial $k\D$-modules
lie in four $\Omega^2$-orbits that are either at the ends of two non-periodic components of type $D_\infty$ if 
$\D$ is semidihedral or at the ends of four 2-tubes if $\D$ is generalized quaternion
(see Lemmas \ref{cor:ARSD} and \ref{cor:ARQ}).
The main ingredient in the proof is a slight variation
of results in \cite{bondarenkodrozd} and \cite{Dade1} to obtain an explicit isomorphism
between $k\D$ and a $k$-algebra $\Lambda_{\D}$ of the form $k\langle a,b\rangle/I_{\D}$ such that the
socle scalars match the descriptions in \cite[Thm. III.1 and Sect. III.13]{erd}. 
We also give precise descriptions of the $\Lambda_{\D}$-modules corresponding to indecomposable endo-trivial 
$k\D$-modules (see Lemmas \ref{lem:YLSD} and \ref{lem:LQ}).
In particular, this provides a different approach to endo-trivial $k\D$-modules.

\medskip

The paper is organized as follows:
In Section \ref{s:prelim}, we give a brief introduction to deformation rings and deformations. 
In particular, we show in Remark \ref{rem:syzygylift} how to use an explicit lift of a finitely generated $kG$-module $V$ over $W$ to
construct an explicit lift of any (co-)syzygy $\Omega^i(V)$ over $W$. 
In Section \ref{s:endotrivial}, we show in Lemma \ref{lem:improvemalle} that we can lift any endo-trivial $kG$-module $V$
over $W$, without any additional assumptions on $k$. Moreover,  we prove Theorem
\ref{thm:supermain} and Corollary \ref{cor:nilpotent}, and we discuss the case of cyclic $p$-groups in Example \ref{ex:cyclic}.  In Sections \ref{s:semidih}, \ref{s:quat8} and \ref{s:quat}, 
we prove Theorem \ref{thm:dontknowhowtosay} by giving an explicit description
of the universal deformation of every indecomposable endo-trivial $k\D$-modules $V$ when $p=2$ and $\D$ is a semidihedral
or generalized quaternion $2$-group; see Propositions \ref{prop:unidefSD}, \ref{prop:unidefQ8} and \ref{prop:betaliftpowerseries} - \ref{prop:unidefQ}. 
Finally, in Section \ref{s:appendix}, we determine the location of the isomorphism classes of the
indecomposable endo-trivial $k\D$-modules in the stable Auslander-Reiten quiver of $k\D$;
see Lemmas \ref{cor:ARSD} and \ref{cor:ARQ}.

Part of this paper constitutes the Ph.D. thesis of the third author under the supervision
of the first author \cite{soto}.

\medskip

Unless stated otherwise, our modules are finitely generated left modules. On the other hand, when $R$ and $S$ are associative
rings with 1 then an $R$-$S$-bimodule $M$ is both a left $R$-module and a right $S$-module such that for all $r\in R$, $s\in S$ and
$m\in M$ one has $r(ms)=(rm)s$. Our maps are written on the left such that the map composition $f\circ g$
means $f$ after $g$.

\medskip
\noindent
\textbf{Acknowledgements.} We would like to thank the referee of an earlier version of this paper for very helpful comments 
which  improved the results of this paper.


\section{Preliminaries}
\label{s:prelim}
\setcounter{equation}{0}

In this section, we give a brief introduction to versal and universal deformation rings and deformations. 
For more background material, we refer the reader to \cite{maz1} and \cite{lendesmit}.

Let $k$ be a field of positive characterstic $p$, and let $W$ be a complete discrete valuation ring of characteristic 0 that has $k$ as its residue field.
If $k$ is a perfect field, one often chooses $W$ to be the ring of infinite Witt vectors over $k$.
Let $\hat{\mathcal{C}}$ be the category of all complete local commutative Noetherian $W$-algebras $R$ with unique maximal ideal $m_R$
and fixed residue map $\pi_R:R\to k=R/m_R$. The morphisms in $\hat{\mathcal{C}}$ are continuous $W$-algebra 
homomorphisms which induce the identity map on $k$.

Suppose $G$ is a finite group and $V$ is a finitely generated $kG$-module. 
A lift of $V$ over an object $R$ in $\hat{\mathcal{C}}$ is a pair $(M,\phi)$ where $M$ is a finitely 
generated $RG$-module that is free over $R$, and $\phi:k\otimes_R M\to V$ is an isomorphism of 
$kG$-modules. Two lifts $(M,\phi)$ and $(M',\phi')$ of $V$ over $R$ are said to be isomorphic if there is an $RG$-module
isomorphism $f:M\to M'$ with $\phi=\phi'\circ (k\otimes f)$. The isomorphism class $[M,\phi]$ of a lift 
$(M,\phi)$ of $V$ over $R$ is called a deformation of $V$ over $R$, and the set of all such deformations 
is denoted by $\mathrm{Def}_G(V,R)$. The deformation functor
$$\hat{F}_V:\hat{\mathcal{C}} \to \mathrm{Sets}$$ 
is a covariant functor which
sends an object $R$ in $\hat{\mathcal{C}}$ to $\mathrm{Def}_G(V,R)$ and a morphism 
$\alpha:R\to R'$ in $\hat{\mathcal{C}}$ to the map $\mathrm{Def}_G(V,R) \to
\mathrm{Def}_G(V,R')$ defined by $[M,\phi]\mapsto [R'\otimes_{R,\alpha} M,\phi_\alpha]$, where  
$\phi_\alpha=\phi$ after identifying $k\otimes_{R'}(R'\otimes_{R,\alpha} M)$ with $k\otimes_R M$.

Suppose there exists an object $R(G,V)$ in $\hat{\mathcal{C}}$ and a deformation 
$[U(G,V),\phi_U]$ of $V$ over $R(G,V)$ with the following property:
For each $R$ in $\hat{\mathcal{C}}$ and for each lift $(M,\phi)$ of $V$ over $R$ there exists 
a morphism $\alpha:R(G,V)\to R$ in $\hat{\mathcal{C}}$ such that $\hat{F}_V(\alpha)([U(G,V),\phi_U])=
[M,\phi]$, and moreover $\alpha$ is unique if $R$ is the ring of dual numbers
$k[\epsilon]$. Then $R(G,V)$ is called the versal deformation ring of $V$ and 
$[U(G,V),\phi_U]$ is called the versal deformation of $V$. If the morphism $\alpha$ is
unique for all $R$ and all lifts $(M,\phi)$ of $V$ over $R$, 
then $R(G,V)$ is called the universal deformation ring of $V$ and $[U(G,V),\phi_U]$ is 
called the universal deformation of $V$. In other words, $R(G,V)$ is universal if and only if
$R(G,V)$ represents the functor $\hat{F}_V$ in the sense that $\hat{F}_V$ is naturally isomorphic to 
the Hom functor
$\mathrm{Hom}_{\hat{\mathcal{C}}}(R(G,V),-)$. 

By \cite{maz1}, every finitely generated $kG$-module $V$ has a versal deformation ring.
By a result of Faltings (see \cite[Prop. 7.1]{lendesmit}), $V$ has a universal deformation ring if 
$\mathrm{End}_{kG}(V)=k$.

\begin{bigprop} {\rm (\cite[Prop. 2.1]{bc}, \cite[Rem. 2.1]{3quat})}
\label{prop:blchin} 
Suppose $V$ is a finitely generated $kG$-mod\-ule whose stable endomorphism ring 
$\underline{\mathrm{End}}_{kG}(V)$ is isomorphic to $k$. Then $V$ has a 
universal deformation ring $R(G,V)$. Moreover, if $R$ is in $\hat{\mathcal{C}}$ and
$(M,\phi)$ and $(M',\phi')$ are lifts of $~V$ over $R$ such that $M$ and $M'$ are
isomorphic as $RG$-modules then $[M,\phi]=[M',\phi']$.
\end{bigprop}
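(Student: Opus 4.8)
The plan is to reduce both assertions to one statement: for every $R$ in $\hat{\mathcal{C}}$ and every lift $(M,\phi)$ of $V$ over $R$, the reduction map $\mathrm{Aut}_{RG}(M)\to\mathrm{Aut}_{kG}(V)$ is surjective. The existence of the versal deformation ring $R(G,V)$ is already known by \cite{maz1}; the displayed surjectivity, applied along small surjections in $\hat{\mathcal{C}}$, upgrades it to a universal deformation ring via Faltings' criterion (\cite[Prop. 7.1]{lendesmit}). And the ``moreover'' assertion then follows at once: if $(M,\phi),(M',\phi')$ are lifts of $V$ over $R$ and $\psi\colon M\to M'$ is an $RG$-isomorphism, then $u:=\phi'\circ(k\otimes\psi)\circ\phi^{-1}\in\mathrm{Aut}_{kG}(V)$, so choosing $\rho\in\mathrm{Aut}_{RG}(M)$ with $k\otimes\rho=\phi^{-1}\circ u^{-1}\circ\phi$ (possible by the surjectivity, transported along $\phi$), the isomorphism $\psi\circ\rho\colon M\to M'$ satisfies $\phi'\circ(k\otimes(\psi\circ\rho))=\phi$, hence is an isomorphism of lifts and $[M,\phi]=[M',\phi']$.

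To prove the surjectivity I would first reduce to $V$ indecomposable. Since $\underline{\mathrm{End}}_{kG}(V)\cong k$ is local, $V$ has a unique non-projective indecomposable summand $V_1$ and $V\cong V_1\oplus P$ with $P$ projective and $\underline{\mathrm{End}}_{kG}(V_1)\cong k$. For a lift $M$ of $V$ over $R$, let $Q$ be the projective $RG$-module with $k\otimes Q\cong P$ (unique up to isomorphism). Using that reduction modulo $m_R$ is surjective on $\mathrm{Hom}$-groups into and out of a projective $RG$-module, one lifts a pair of maps realizing $P$ as a summand of $V$ to maps $Q\to M$ and $M\to Q$ whose composite $Q\to Q$ reduces to the identity and is therefore a unit of the local ring $\mathrm{End}_{RG}(Q)$; thus $Q$ is a direct summand of $M$, and by Krull--Schmidt over the semiperfect ring $RG$ we get $M\cong M_1\oplus Q$ with $k\otimes M_1\cong V_1$. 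Carrying the isomorphisms $\phi$ through this decomposition yields a natural isomorphism $\hat{F}_V\cong\hat{F}_{V_1}$ and compatible identifications of automorphism groups, so it suffices to treat $V=V_1$ indecomposable.

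Assume now $V$ is indecomposable, non-projective, with $\underline{\mathrm{End}}_{kG}(V)\cong k$, and fix a lift $M$ of $V$ over $R$. Then $\mathrm{End}_{kG}(V)$ is local; its two-sided ideal $\mathcal{P}(V,V)$ of endomorphisms factoring through a projective module has quotient $\underline{\mathrm{End}}_{kG}(V)=k$, a field, so $\mathcal{P}(V,V)$ is a maximal ideal contained in the radical and hence equals the radical, giving $\mathrm{End}_{kG}(V)=k\cdot\mathrm{id}_V\oplus\mathcal{P}(V,V)$. Every element of $\mathrm{End}_{kG}(V)$ lifts to $\mathrm{End}_{RG}(M)$: a scalar $c\cdot\mathrm{id}_V$ is the reduction of $\hat{c}\cdot\mathrm{id}_M$ for a lift $\hat{c}\in W$ of $c$; an endomorphism $b\circ a$ with $a\colon V\to P_0$, $b\colon P_0\to V$ and $P_0$ projective lifts by choosing a projective $RG$-module $P_0'$ with $k\otimes P_0'\cong P_0$ and lifting $a$, $b$ along the surjective reductions $\mathrm{Hom}_{RG}(M,P_0')\to\mathrm{Hom}_{kG}(V,P_0)$ and $\mathrm{Hom}_{RG}(P_0',M)\to\mathrm{Hom}_{kG}(P_0,V)$. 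Hence $k\otimes(-)\colon\mathrm{End}_{RG}(M)\to\mathrm{End}_{kG}(V)$ is surjective. Finally $M$ is indecomposable over $RG$ (a summand vanishing modulo $m_R$ vanishes by Nakayama), so $\mathrm{End}_{RG}(M)$ is local; a surjection of rings from this local ring onto the local ring $\mathrm{End}_{kG}(V)$ maps radical into radical and therefore induces a surjection of residue division rings onto $\mathrm{End}_{kG}(V)/\mathcal{P}(V,V)=k$, which must be an isomorphism. Consequently a preimage in $\mathrm{End}_{RG}(M)$ of a unit of $\mathrm{End}_{kG}(V)$ is a unit, which is exactly the surjectivity of $\mathrm{Aut}_{RG}(M)\to\mathrm{Aut}_{kG}(V)$.

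I expect the main difficulty to be the reduction to the indecomposable summand $V_1$: one must check that the projective part of any lift is split off \emph{canonically}, so that $\hat{F}_V$ and $\hat{F}_{V_1}$ are genuinely identified as functors together with their automorphism data; this rests on the uniqueness of projective lifts, Krull--Schmidt over the semiperfect ring $RG$, and the surjectivity of reduction on $\mathrm{Hom}$'s involving projective $RG$-modules, and the isomorphisms $\phi$ must be tracked throughout. The reduction is not a formality: the hypothesis used is only that the \emph{stable} endomorphism ring is $k$, and the decisive step ``a preimage of a unit is a unit'' relies on $\mathrm{End}_{RG}(M_1)$ being local, which fails for decomposable $V$; it is precisely this passage to the indecomposable summand that lets one replace the classical hypothesis $\mathrm{End}_{kG}(V)=k$ by $\underline{\mathrm{End}}_{kG}(V)\cong k$. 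A secondary point requiring care is verifying that the surjectivity established above really supplies the hypothesis of Faltings' criterion — it is needed for automorphisms inducing the identity on $V$ and compatibly along small surjections — which one obtains by applying the above over each ring of the extension.
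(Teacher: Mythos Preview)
The paper does not give its own proof of this proposition: it is quoted verbatim from \cite[Prop.~2.1]{bc} and \cite[Rem.~2.1]{3quat}, with no argument supplied here. So there is nothing in the present paper to compare against; what follows is a brief assessment of your proposal on its own terms and against the argument in \cite{bc}.

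Your strategy---reduce everything to surjectivity of $\mathrm{Aut}_{RG}(M)\to\mathrm{Aut}_{kG}(V)$, establish this by decomposing $\mathrm{End}_{kG}(V)=k\cdot\mathrm{id}_V\oplus\mathcal{P}(V,V)$ and lifting the two pieces separately, then invoke locality---is exactly the idea behind \cite[Prop.~2.1]{bc}. The identification of $\mathcal{P}(V,V)$ with the radical (via the hypothesis that the stable endomorphism ring is $k$), and the lifting of a factorization $V\to P_0\to V$ through a projective $RG$-lift $P_0'$ of $P_0$, are the heart of the matter and you have them right. The reduction to the indecomposable summand $V_1$ is also the approach taken in \cite{bc}; your use of Krull--Schmidt over the semiperfect ring $RG$ and surjectivity of reduction on $\mathrm{Hom}$'s into and out of projectives is correct, though the surjectivity of $\mathrm{Hom}_{RG}(M,P_0')\to\mathrm{Hom}_{kG}(V,P_0)$ relies on relative $R$-injectivity of projective $RG$-modules (equivalently, on $kG$ being self-injective, via d\'evissage along the $m_R$-adic filtration), and that deserves a sentence.

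One point to tighten: your final paragraph correctly flags that the surjectivity you prove is $\mathrm{Aut}_{RG}(M)\twoheadrightarrow\mathrm{Aut}_{kG}(V)$, whereas the input to Schlessinger's (H4)/Faltings' criterion is surjectivity of $\mathrm{Aut}_{R'G}(M')\to\mathrm{Aut}_{RG}(M)$ along a small surjection $R'\to R$. The clean way to bridge this, and the route taken in \cite{bc}, is to first show that $\underline{\mathrm{End}}_{RG}(M)\cong R$ for \emph{every} lift $M$ over every Artinian $R$ (the same ``scalar plus projective-factoring'' decomposition works over $R$), and then rerun your lifting argument with $R$ as base and $R'$ on top. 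Your sketch ``applying the above over each ring of the extension'' is pointing at this, but the intermediate statement $\underline{\mathrm{End}}_{RG}(M)\cong R$ should be made explicit. With that addition your argument is complete and matches the cited proof.
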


In particular, this means that if the stable endomorphism ring of $V$ is isomorphic to $k$, then
we do not need to distinguish between deformations $[M,\phi]$ of $V$ over an object $R$ in $\hat{\mathcal{C}}$,
as defined above, and so-called {\it weak} deformations $[M]$ of $V$ over $R$ (i.e., isomorphism classes of $RG$-modules
$M$ with  $k\otimes_R M\cong V$, without specifying a particular $kG$-module isomorphism $\phi:k\otimes_R M \to V$).

Moreover, we have the following result, where $\Omega$ denotes the syzygy functor (also called Heller operator,
see, for example, \cite[\S 20]{alp}).

\begin{biglemma} 
\label{lem:defhelp}
{\rm \cite[Cors. 2.5 and 2.8]{bc}}
Let $V$ be a finitely generated $kG$-module with $\underline{\mathrm{End}}_{kG}(V)\cong k$.
\begin{enumerate}
\item[(i)] Then $\underline{\mathrm{End}}_{kG}(\Omega(V))\cong k$, and $R(G,V)$ and $R(G,\Omega(V))$ 
are isomorphic.
\item[(ii)] There is a non-projective indecomposable $kG$-module $V_1$ $($unique up to
iso\-mor\-phism$)$ such that $\underline{\mathrm{End}}_{kG}(V_1)\cong k$, $V$ is isomorphic to 
$V_1\oplus Q$ for some projective $kG$-module $Q$, and $R(G,V)$ and $R(G,V_1)$ are 
isomorphic.
\end{enumerate}
\end{biglemma}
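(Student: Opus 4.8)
\emph{The plan.} Both parts reduce to constructing natural isomorphisms of deformation functors and then invoking Proposition~\ref{prop:blchin}, which produces the universal deformation rings in question and, via its addendum on ``weak'' deformations, lets me work with isomorphism classes of $RG$-modules and ignore the structure maps $\phi$. Two standard facts will carry the argument: since $kG$ is self-injective, the syzygy operator $\Omega$ is a self-equivalence of the stable module category $\underline{\mathrm{mod}}\,kG$ with quasi-inverse the cosyzygy operator $\Omega^{-1}$; and for every $R$ in $\hat{\mathcal{C}}$ the group ring $RG$ is module-finite over the complete local ring $R$, hence semiperfect, so idempotents lift modulo the radical and the Krull--Schmidt theorem holds for finitely generated $RG$-modules.

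\emph{Part (i).} Applying the self-equivalence $\Omega$ to endomorphisms gives a ring isomorphism $\underline{\mathrm{End}}_{kG}(V)\xrightarrow{\sim}\underline{\mathrm{End}}_{kG}(\Omega(V))$; in particular $\underline{\mathrm{End}}_{kG}(\Omega(V))\cong k$, so by Proposition~\ref{prop:blchin} the module $\Omega(V)$ also has a universal deformation ring. To identify $R(G,V)$ with $R(G,\Omega(V))$ I would build a natural transformation $\hat{F}_V\to\hat{F}_{\Omega(V)}$: given a lift $(M,\phi)$ of $V$ over $R$, choose a projective $RG$-cover $P$ of $M$ and set $\Omega_R(M)=\ker(P\to M)$, which is again free over $R$; reducing modulo $m_R$, and using Nakayama's lemma to see that $k\otimes_R P$ is a projective cover of $V$, one obtains $k\otimes_R\Omega_R(M)\cong\Omega(V)$ (this is the construction underlying Remark~\ref{rem:syzygylift}), so $[\Omega_R(M)]$ is a deformation of $\Omega(V)$ over $R$. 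Because $\underline{\mathrm{End}}_{kG}(V)$ and $\underline{\mathrm{End}}_{kG}(\Omega(V))$ are both $k$, Proposition~\ref{prop:blchin} shows this class is independent of the chosen cover, and it plainly commutes with base change, so the transformation is natural. The mirror construction using injective hulls over $R$ gives a natural transformation in the other direction realizing $\Omega^{-1}$, and the two composites are the identity since $\Omega\Omega^{-1}$ and $\Omega^{-1}\Omega$ are isomorphic to the identity functor on the stable category (harmless on deformations, again by Proposition~\ref{prop:blchin}). Hence $\hat{F}_V\cong\hat{F}_{\Omega(V)}$ and $R(G,V)\cong R(G,\Omega(V))$.

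\emph{Part (ii).} By Krull--Schmidt write $V\cong V_1\oplus\cdots\oplus V_r\oplus Q$ with $Q$ projective and each $V_i$ non-projective indecomposable. Each $\mathrm{End}_{kG}(V_i)$ is local by Fitting's lemma, and its quotient $\underline{\mathrm{End}}_{kG}(V_i)$ is nonzero because $V_i$ is not projective; hence the pairwise orthogonal idempotents of $\underline{\mathrm{End}}_{kG}(V)$ coming from the projections $V\to V_i\to V$ are nonzero and, if $r\ge 2$, $k$-linearly independent, contradicting $\underline{\mathrm{End}}_{kG}(V)\cong k$. So $r=1$, and since $Q$ is projective every homomorphism with $Q$ as source or target factors through a projective, whence $\underline{\mathrm{End}}_{kG}(V_1)\cong\underline{\mathrm{End}}_{kG}(V)\cong k$; uniqueness of $V_1$ up to isomorphism is Krull--Schmidt. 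For the deformation rings I would fix a projective $WG$-module $\widetilde{Q}$ lifting $Q$ (projectives lift, uniquely up to isomorphism) and define $\hat{F}_{V_1}\to\hat{F}_V$ by $[M]\mapsto[M\oplus(R\otimes_W\widetilde{Q})]$. Conversely, given a lift $N$ of $V$ over $R$, one shows that $N$ has a projective $RG$-direct summand isomorphic to $R\otimes_W\widetilde{Q}$, with complementary summand a lift $N_1$ of $V_1$ that is unique up to isomorphism: lift the idempotent of $\mathrm{End}_{kG}(k\otimes_R N)$ onto the summand $Q$ to $\mathrm{End}_{RG}(N)$, note that a projective $RG$-lattice whose reduction is a direct summand of $k\otimes_R N$ is itself a direct summand of $N$, and apply Krull--Schmidt. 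The resulting transformation $\hat{F}_V\to\hat{F}_{V_1}$ is inverse to the first, so $\hat{F}_{V_1}\cong\hat{F}_V$ and $R(G,V_1)\cong R(G,V)$.

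\emph{The main obstacle.} The technical heart is the ``splitting off of the projective part'' in part~(ii): one must verify that the projective summand of the reduction $k\otimes_R N$ lifts to an $RG$-direct summand of $N$ isomorphic to $R\otimes_W\widetilde{Q}$, that the complement $N_1$ is determined up to isomorphism, and that both vary functorially in $R$. All of this follows from idempotent lifting over the complete local ring $R$ together with Krull--Schmidt for $RG$-lattices, but the point that needs care is the uniqueness and naturality --- upgrading, for each fixed $R$, a bijection of pointed sets to an isomorphism of the functors $\hat{F}_{V_1}$ and $\hat{F}_V$. By contrast, the analogous verifications in part~(i) (independence of the choice of projective cover and compatibility of the syzygy construction with base change) become routine once Proposition~\ref{prop:blchin} is available, since it lets everything be checked at the level of $RG$-module isomorphisms.
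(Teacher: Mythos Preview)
The paper does not prove this lemma; it is quoted verbatim from \cite[Cors.\ 2.5 and 2.8]{bc} without argument, so there is no ``paper's own proof'' to compare against. Your outline is essentially the standard one underlying those corollaries and is correct.

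Two places where your sketch could be tightened, since you flagged them yourself. For the inverse transformation in part~(i), ``injective hulls over $R$'' is the right idea but needs interpretation when $R\in\hat{\mathcal{C}}$ is not Artinian: the clean way is to use that $RG$ is a symmetric $R$-algebra, so $\mathrm{Hom}_R(-,R)$ carries finitely generated $R$-free $RG$-modules to themselves and interchanges projective covers with the ``relative injective hulls'' you need; alternatively, restrict the natural isomorphism to Artinian coefficients (where $RG$ is genuinely self-injective) and then pass to pro-objects. For part~(ii), your key step---that a projective $RG$-lattice $P$ with $k\otimes_R P$ a direct summand of $k\otimes_R N$ is already a direct summand of $N$---goes through exactly as you indicate once one observes that the short exact sequence $0\to P\to N\to N/P\to 0$ has $R$-free cokernel (from $\mathrm{Tor}_1^R(N/P,k)=0$) and then invokes the Frobenius property of $RG$ to see $\mathrm{Ext}^1_{RG}(N/P,P)=0$. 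Your identification of this splitting as the main technical point is accurate; the naturality in $R$ then follows because base change $R'\otimes_R-$ takes projective $RG$-summands to projective $R'G$-summands and Proposition~\ref{prop:blchin} collapses the remaining ambiguity in the complement.
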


One of our goals is to explicitly construct certain lifts. The following remark shows how to construct an explicit lift of an 
arbitrary (co-)syzygy of a finitely generated $kG$-module $V$ over $W$ provided one knows an explicit lift of $V$ over $W$.

\begin{bigrem}
\label{rem:syzygylift}
Let $V$ be a finitely generated $kG$-module, 
and suppose we know an explicit lift $V_W$ of $V$ over $W$, with corresponding $kG$-module isomorphism
$\phi_V:k\otimes_W V_W\to V$.

By \cite[Props. (6.5) and (6.7)]{CR}, a projective $kG$-module can always be lifted over $W$. 
Let $\pi:P\to V$ (resp. $\iota:V\to E$) be a projective cover (resp. injective hull) of $V$ as a $kG$-module.
Since $kG$ is self-injective, it follows that $E$ is a projective $kG$-module.
Let $P_W$ (resp. $E_W$) be a lift of $P$ (resp. $E$) over $W$, with corresponding $kG$-module isomorphism
$\phi_P:k\otimes_W P_W\to P$ (resp. $\phi_E:k\otimes_W E_W\to E$). In particular, $E_W$ is a projective $WG$-module.

Since $P_W$ is a projective $WG$-module, there exists a $WG$-module homomorphism $\pi_W:P_W\to V_W$
such that $\phi_V\circ (k\otimes_W\pi_W)\circ\phi_P^{-1} = \pi$. By Nakayama's Lemma, it follows that $\pi_W$ is surjective.
Since $\mathrm{Ker}(\pi_W)$ is a free $W$-module, it follows that $\mathrm{Ker}(\pi_W)$ is a 
lift of $\Omega(V)=\mathrm{Ker}(\pi)$ over $W$. We use the notation
$\Omega_{WG}(V_W) = \mathrm{Ker}(\pi_W)$.

We can use a similar argument as in the proof of \cite[Prop. 2.4]{bc} to find an explicit lift of $\Omega^{-1}(V)=\mathrm{Coker}(\iota)$.
Namely, we have a short exact sequence
\begin{equation}
\label{eq:ses000}
0 \to m_W E_W \to E_W \to E \to 0
\end{equation}
and we have $\mathrm{Ext}^1_{WG}(V_W,m_W E_W)=0$.
This implies that 
there there exists a $WG$-module homomorphism $\iota_W:V_W\to E_W$
such that $\phi_E\circ (k\otimes_W\iota_W)\circ\phi_V^{-1} = \iota$. Since $\iota$ is injective and since $V_W$ and $E_W$ are free over $W$,
it follows by Nakayama's Lemma that $\iota_W$ is also injective. Thus we have a commutative diagram of $WG$-modules 
$$\xymatrix{0\ar[r]& V_W \ar[r]^{\iota_W}\ar[d]& E_W\ar[r]\ar[d] & \mathrm{Coker}(\iota_W)\ar[r]\ar[d]&0\\
0\ar[r]&V\ar[r]^{\iota} & E \ar[r]& \Omega^{-1}(V)\ar[r]&0
}$$
with exact rows,
and
$\mathrm{Coker}(\iota_W)$ is a free $W$-module. Therefore,  
$\mathrm{Coker}(\iota_W)$ is a lift of $\Omega^{-1}(V)=\mathrm{Coker}(\iota)$ over $W$. We use the notation
$\Omega^{-1}_{WG}(V_W) = \mathrm{Coker}(\iota_W)$.
\end{bigrem}


\section{Endo-trivial modules}
\label{s:endotrivial}
\setcounter{equation}{0}

Assume the notation from the previous section. Suppose $V$ is a finitely generated $kG$-module. We say $V$ is endo-trivial if 
its $k$-endomorphism ring $\mathrm{End}_k(V )$ is, as a $kG$-module, stably isomorphic to the trivial $kG$-module $k$. 
In other words, $\mathrm{End}_k(V )$ is isomorphic to a direct sum of $k$ and a projective $kG$-module.

Note that every endo-trivial $kG$-module satisfies $\underline{\mathrm{End}}_{kG}(V)\cong k$. If $G$ is a $p$-group then it 
follows from \cite{carl1} that the endo-trivial $kG$-modules coincide with the $kG$-modules $V$ with $\underline{\mathrm{End}}_{kG}(V)\cong k$.
However, for arbitrary finite groups, the set of isomorphism classes of $kG$-modules whose stable endomorphism rings are isomorphic to $k$
usually properly contains the isomorphism classes of endo-trivial $kG$-modules.

In this section, we prove Theorem \ref{thm:supermain}. One of the main ingredients in the proof is that endo-trivial $kG$-modules 
have lifts over $W$.
In the case when $G$ is a $p$-group, this was proved by Alperin in \cite{alpendotrivial} without any additional assumptions on $k$. 
For arbitrary finite groups $G$, Alperin's argument was generalized to 
arbitrary finite groups in \cite{malle}. However, the proof assumed that $k$ is algebraically closed and the fraction field $F$ of $W$ is
a splitting field of $G$ (see \cite[Sect. 2]{malle}).
We first show how to use a simple base change argument to drop these assumptions on $k$ and $W$.

\begin{biglemma}
\label{lem:improvemalle}
Let $k$ be an arbitrary field of positive characteristic $p$, and let $W$ be a complete discrete valuation ring of characteristic $0$ that has 
$k$ as its residue field. If $V$ is an endo-trivial $kG$-module then $V$ can be lifted to an endo-trivial $WG$-module $V_W$.
\end{biglemma}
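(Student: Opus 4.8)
The plan is to reduce to the situation already handled in \cite{malle} by a faithfully flat base change. First I would enlarge $W$ to a complete discrete valuation ring $W'$ whose residue field $k'$ is an algebraic closure (or at least a sufficiently large algebraic extension) of $k$ and whose fraction field $F' = \mathrm{Frac}(W')$ is a splitting field for $G$; such a $W'$ exists because one may take the completion of an unramified extension of $W$ corresponding to the residue extension $k \subseteq k'$, possibly followed by adjoining suitable roots of unity so that $F'$ becomes a splitting field (finitely many, since $G$ is finite). The inclusion $W \hookrightarrow W'$ is local, flat, and induces $k \hookrightarrow k'$ on residue fields. Then $V' := k' \otimes_k V$ is a $k'G$-module, and it is still endo-trivial: $\mathrm{End}_{k'}(V') \cong k' \otimes_k \mathrm{End}_k(V)$ as $k'G$-modules, and base change along $k \subseteq k'$ carries $k \oplus (\text{projective})$ to $k' \oplus (\text{projective})$ because projective $kG$-modules stay projective and the trivial module stays trivial. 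By \cite{malle} applied over $W'$, there is an endo-trivial $W'G$-module $V'_{W'}$ lifting $V'$.

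The second step is a Galois-descent argument to come back down to $W$. I would argue that $V$, being endo-trivial over $kG$, satisfies $\underline{\mathrm{End}}_{kG}(V) \cong k$, hence by Proposition \ref{prop:blchin} it has a well-defined universal (not merely versal) deformation ring $R(G,V)$ in $\hat{\mathcal C}$; likewise $V'$ has universal deformation ring $R(G,V')$ over $W'$. Moreover $V_W$ exists as a lift over $W$ if and only if there is a $W$-algebra map $R(G,V) \to W$. Now the formation of the universal deformation ring is compatible with the flat base change $W \to W'$ in the sense that $R(G,V') \cong W' \widehat{\otimes}_W R(G,V)$ — this is the standard base-change property of deformation functors (a lift of $V$ over $R \in \hat{\mathcal C}_W$ base-changes to a lift of $V'$ over $W' \widehat\otimes_W R$, and the tangent and obstruction spaces transform by $\otimes_k k'$, so the pro-representing objects match after completed tensor). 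Since $V'$ lifts over $W'$, there is a $W'$-algebra map $R(G,V') \to W'$, i.e. a $W$-algebra map $W' \widehat\otimes_W R(G,V) \to W'$. Composing with $R(G,V) \to W' \widehat\otimes_W R(G,V)$ gives a $W$-algebra map $R(G,V) \to W'$ whose image lands in a complete local $W$-subalgebra that is finite flat over $W$ with residue field $k$; but $W$ is the only such subring of $W'$ (any element of the image is integral over $W$ with residue in $k$, hence lies in $W$ by Hensel/the structure of unramified extensions), so in fact $R(G,V) \to W'$ factors through $W$. This produces the desired lift $V_W$ over $W$.

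Finally I would check that the lift $V_W$ so obtained is itself endo-trivial as a $WG$-module, not merely a lift of an endo-trivial module. This follows because $W' \otimes_W V_W$ is a lift of $V'$ over $W'$ lying under the map $R(G,V') \to W'$ we used, hence — invoking that the endo-trivial lift $V'_{W'}$ produced by \cite{malle} is, up to isomorphism, the one classified by this very map (or, more robustly, that any lift over $W'$ of an endo-trivial module is automatically endo-trivial, which one can see from $\mathrm{End}_{W'}(W'\otimes_W V_W) \cong \mathrm{Hom}_{W'G}\text{-data reducing mod }m_{W'}$ to the endo-trivial structure and then lifting the idempotent decomposition $\mathrm{End}_k(V') \cong k' \oplus (\text{proj})$ through the complete local ring $W'$) — we get that $\mathrm{End}_{W'}(W' \otimes_W V_W) \cong W' \oplus (\text{projective } W'G\text{-module})$; descending this decomposition along the faithfully flat $W \to W'$ gives $\mathrm{End}_W(V_W) \cong W \oplus (\text{projective } WG\text{-module})$, i.e. $V_W$ is endo-trivial over $WG$.

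\medskip
\noindent\textbf{Main obstacle.} The delicate point is the base-change compatibility $R(G,V') \cong W' \widehat\otimes_W R(G,V)$ together with the descent step that forces the classifying map $R(G,V) \to W'$ down to $W$; one must be careful that $W'$ is chosen so that $W$ is integrally closed in it and that no extra ramification is introduced, and that the universal (not just versal) property is genuinely available — which it is, by Proposition \ref{prop:blchin}, precisely because $\underline{\mathrm{End}}_{kG}(V) \cong k$ for endo-trivial $V$. Everything else is formal.
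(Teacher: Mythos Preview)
Your approach differs from the paper's and contains a real gap at the descent step. The paper does not try to descend a lift from $W'$ to $W$; instead it revisits the step-by-step obstruction calculation underlying \cite{malle}. One lifts $\rho:G\to\mathrm{GL}_n(k)$ successively to $\mathrm{GL}_n(W/\varpi^{\ell+1}W)$, and the obstruction at each stage is a class in $\HH^2\bigl(G,\tfrac{\varpi^\ell W}{\varpi^{\ell+1}W}\otimes_k\mathrm{Ad}(\rho)\bigr)$. Passing to the finite \emph{unramified} extension $W'=W[\zeta]$ (with $\zeta$ a primitive $(nm)$-th root of unity, $n=\dim_k V$, $m$ the $p'$-part of $|G|$), this obstruction class simply base-changes along $k\hookrightarrow k'$; since it vanishes over $k'$ by \cite{malle} and $\HH^2(G,-)\otimes_k k'=\HH^2(G,-\otimes_k k')$, it already vanishes over $k$. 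This is descent of a cohomology class, which is immediate, rather than descent of a point of the deformation space.

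The gap in your argument is the assertion ``$W$ is the only such subring of $W'$ (any element of the image is integral over $W$ with residue in $k$, hence lies in $W$).'' This is false: if $W'=W[\zeta]$ is unramified of degree $\ge 2$ and $\varpi$ is a uniformizer, then $\varpi\zeta\in W'$ is integral over $W$ and has residue $0\in k$, yet $\varpi\zeta\notin W$; indeed $W+\varpi\zeta\,W$ is a complete local $W$-subalgebra of $W'$, finite free over $W$, with residue field $k$, strictly larger than $W$. So the composite $R(G,V)\to W'\widehat\otimes_W R(G,V)\to W'$ need not land in $W$. In scheme language, acquiring a $W'$-point of $\mathrm{Spf}\,R(G,V)$ after an unramified base change does not by itself produce a $W$-point; for that one would need formal smoothness of the deformation functor (i.e.\ precisely the vanishing of obstructions the paper establishes) or a $\mathrm{Gal}(W'/W)$-equivariant choice of the lift $V'_{W'}$, neither of which you supply. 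Your remaining ingredients --- the base-change identity $R(G,V')\cong W'\widehat\otimes_W R(G,V)$, and the fact that any $W$-lattice lifting an endo-trivial module is itself endo-trivial (the composite $W\to M^\ast\otimes_W M\xrightarrow{\mathrm{tr}}W$ is multiplication by $\dim_k V$, a unit, so the complement has projective reduction) --- are sound; it is only the descent of the $W'$-point to a $W$-point that fails.
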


\begin{proof}
We first note that \cite[Lemma 2.1]{malle} and \cite[Prop. 2.4]{malle} are true for an arbitrary field $k$ of characteristic $p$. 
Moreover, if $N$ is a $WG$-module that is free over $W$ such that $N/m_W N$ is a projective $kG$-module, then it follows, for example from \cite[Sect. 14.2]{serre}, that $N$ is a projective $WG$-module without any additional assumptions on $k$. 

Write the order of $G$ as $p^d\,m$, where $p$ does not divide $m$. 
Let $n=\mathrm{dim}_k\,V$ and let $\rho:G\to\mathrm{GL}_n(k)$ be a representation of $V$. By \cite[Lemma 2.1]{malle}, $n$ is relatively prime to $p$.

Suppose first that $k$ contains all $(nm)$-th roots of unity. Then the arguments in the proof of \cite[Thm. 1.3]{malle} show
that $V$ can be lifted to an endo-trivial $WG$-module $V_W$.

Suppose next that $k$ is an arbitrary field of characteristic $p$. Let $\zeta$ be a primitive $(nm)$-th root of unity in a 
fixed separable closure of $k$. Define $k'=k(\zeta)$ and $W'=W[\zeta]$. Note that since $nm$ is relatively prime to $p$,
$W'$ is unramified over $W$. In particular, if $\varpi$ is a uniformizer for $W$
then $\varpi$ is also a uniformizer for $W'$. Let $\rho':G\to\mathrm{GL}_n(k')$ be the representation of $G$ over $k'$ obtained
by composing $\rho$ with the inclusion $\mathrm{GL}_n(k)\hookrightarrow\mathrm{GL}_n(k')$. Define $G_\rho$ (resp. $G_{\rho'}$) to
be the image of $\rho$ (resp. $\rho'$). Our goal is to lift $G_\rho$ to a subgroup of $\mathrm{GL}_n(W)$, by knowing that we
can lift $G_{\rho'}$ to a subgroup of $\mathrm{GL}_n(W')$.  Let $\ell\ge 1$. Suppose we have lifted $G_\rho$ to a subgroup 
$G_{\rho,\ell}$ of $\mathrm{GL}_n(W/\varpi^\ell W)$ such that the injection 
$\mathrm{GL}_n(W/\varpi^\ell W)\to \mathrm{GL}_n(W'/\varpi^\ell W')$ provides a subgroup $G_{\rho',\ell}$ of
$\mathrm{GL}_n(W'/\varpi^\ell W')$ lifting $G_{\rho'}$. The obstruction to lifting $G_{\rho,\ell}$ to 
$\mathrm{GL}_n(W/\varpi^{\ell+1} W)$ is a class $c$ in $\HH^2(G,\frac{\varpi^\ell W}{\varpi^{\ell+1} W}\otimes_k \mathrm{Ad}(\rho))$,
where $\mathrm{Ad}(\rho)$ is the adjoint representation associated
to $\rho$. On the other hand, the obstruction to lifting $G_{\rho',\ell}$ to $\mathrm{GL}_n(W'/\varpi^{\ell+1} W')$ is the class
$c'$ in 
$$\HH^2(G,\frac{\varpi^\ell W'}{\varpi^{\ell+1} W'}\otimes_{k'} \mathrm{Ad}(\rho'))=
k'\otimes_k\HH^2(G,\frac{\varpi^\ell W}{\varpi^{\ell+1} W}\otimes_k \mathrm{Ad}(\rho))$$
which is obtained from $c$ by base changing from $k$ to $k'$. (Note that we use here that $W'$ is unramified over $W$.)  Since by the arguments in the proof of \cite[Thm. 1.3]{malle}, the class $c'$ vanishes, this means that $c$ must vanish
as well, leading to a lift of $G_\rho$ to a subgroup $G_{\rho,\ell+1}$ of $\mathrm{GL}_n(W/\varpi^{\ell+1} W)$ such that the injection 
$\mathrm{GL}_n(W/\varpi^{\ell+1} W)\to \mathrm{GL}_n(W'/\varpi^{\ell+1} W')$ provides a subgroup $G_{\rho',\ell+1}$ of
$\mathrm{GL}_n(W'/\varpi^{\ell+1} W')$ lifting $G_{\rho'}$. Taking the limit  as $\ell \to \infty$ of these lifts completes the proof of Lemma \ref{lem:improvemalle}.
\end{proof}

Using Lemma \ref{lem:improvemalle} together with \cite[Lemma 2.2.2]{bleherTAMS2009}, we can now prove Theorem \ref{thm:supermain}.

\bigskip

\noindent
\textit{Proof of Theorem $\ref{thm:supermain}$.}
By Lemma \ref{lem:improvemalle},
$V$ can be lifted to an endo-trivial $WG$-module $V_W$. 
Define  $M=V_W\otimes_W WG$. Then $M$ is a $WG$-$WG$-bimodule, where the $G$-action on the left is diagonal and the $G$-action on the right is on the $WG$-factor.
Hence $M$ is projective (in fact, free) both as a left and as a right $WG$-module. 
Define $N=\mathrm{Hom}_{W}(M,W)$. Then
\begin{eqnarray*}
M\otimes_{WG} N  &\cong& WG\oplus P\\
N\otimes_{WG} M &\cong& WG\oplus Q
\end{eqnarray*}
where $P$ and $Q$ are projective $WG$-$WG$-bimodules, and both of these isomorphisms are isomorphisms of $WG$-$WG$-bimodules. 
In particular, $M$ and $N$ define
a stable autoequivalence of Morita type of the $W$-stable category $WG$-\underline{mod}, which is the quotient category of the category $WG$-mod of finitely generated
$WG$-modules by the subcategory of relatively $W$-projective modules (see, e.g., \cite{linckelstable} for more details). 
If $V_0=k$ is the trivial simple $kG$-module then 
\begin{eqnarray*}
(k\otimes_WM)\otimes_{kG} V_0 &\cong&
(k\otimes_W V_W)\otimes_k(kG\otimes_{kG}V_0)\\
&\cong& V\otimes_k V_0\;\cong\;V.
\end{eqnarray*}
Hence it follows by \cite[Lemma 2.2.2]{bleherTAMS2009} that  $R(G,V)\cong R(G,V_0)$. By \cite[Sect. 1.4]{maz1}, $R(G,V_0)$ is isomorphic to $WG^{\mathrm{ab},p}$
and the universal deformation of $V_0$ is given by the isomorphism class of $U(G,V_0)=WG^{\mathrm{ab},p}$ as an $R(G,V_0)G$-module on which 
$g\in G$ acts as multiplication by its image $\overline{g}\in G^{\mathrm{ab},p}$.
The proof of \cite[Lemma 2.2.2]{bleherTAMS2009} shows that the isomorphism class of the universal deformation of $V$ is given by the isomorphism class of
\begin{eqnarray*}
(M\otimes_W R(G,V_0))\otimes_{R(G,V_0)G}U(G,V_0)&\cong& (V_W\otimes_W R(G,V_0)G)\otimes_{R(G,V_0)G}U(G,V_0)\\
&\cong& V_W\otimes_WU(G,V_0)\\
&\cong& V_W\otimes_WWG^{\mathrm{ab},p}
\end{eqnarray*}
where $g\in G$ acts diagonally as multiplication by $g$ on $V_W$ and as multiplication by its image $\overline{g}$ on $WG^{\mathrm{ab},p}$.
This proves parts (i) and (ii) of Theorem \ref{thm:supermain}.

For part (iii) of Theorem \ref{thm:supermain}, let $V_1$ be a non-projective indecomposable direct summand of $V$. Then $V_1$ is unique
up to isomorphism, and $V$ is the direct sum of $V_1$ and a projective $kG$-module. In particular, we have $R(G,V)\cong R(G,V_1)$. 
Since the $k$-dimension of $V_1$ is not divisible by $p$, it follows by \cite[Thm. (19.26)]{CR} that the vertices of $V_1$ are Sylow $p$-subgroups of $G$.
Therefore, $D$ must also be a Sylow $p$-subgroup. Since
$G^{\mathrm{ab},p}$ is a subquotient group of a Sylow $p$-subgroup of $G$, 
it follows that $R(G,V)$ is 
isomorphic to a subquotient ring of $WD$.
\hfill $\Box$

\bigskip

We now turn to nilpotent blocks, and the proof of Corollary \ref{cor:nilpotent}. The following remark recalls the most important definitions and results for nilpotent blocks.

\begin{bigrem}
\label{rem:nilpotent}
As in \cite{puig}, we assume  that $k$ is algebraically closed.
Let $D$ be a finite $p$-group, let $G$ be a finite group and let $B$ be a nilpotent block of $kG$ that has $D$ as a defect group.
By \cite{brouepuig}, this means that whenever $(D_1,e_1)$ is a $B$-Brauer pair then 
the quotient $N_G(D_1,e_1)/C_G(D_1)$ is a $p$-group. In other words,
for all subgroups $D_1$ of $D$ and for all block idempotents $e_1$ of $kC_G(D_1)$ associated with $B$, the 
quotient of the stabilizer $N_G(D_1,e_1)$ of $e_1$ in $N_G(D_1)$ by the centralizer $C_G(D_1)$
is a $p$-group. In \cite{puig}, Puig rephrased this definition using the theory of local pointed groups. 

Let $\hat{B}$ be the block of $WG$ corresponding to $B$. Then $\hat{B}$ is also nilpotent, and
by \cite[\S1.4]{puig}, $\hat{B}$ is Morita equivalent to $WD$.  In
\cite[Thm. 8.2]{puig2}, Puig showed that the converse is also true in a very strong way.
Namely, if $\hat{B'}$ is another  block over $W$ such that there is a stable equivalence of
Morita type between $\hat{B}$ and $\hat{B'}$, then $\hat{B'}$ is also nilpotent. Hence Corollary
\ref{cor:nilpotent} can be applied in particular if there is only known to be a stable
equivalence of Morita type between $\hat{B}$ and $WD$.
\end{bigrem}

\bigskip

\noindent
\textit{Proof of Corollary $\ref{cor:nilpotent}$.}
Let $D$ be a defect group of $B$, so that $B$ is a nilpotent block of $kG$. By Remark \ref{rem:nilpotent}, this means that
the corresponding block $\hat{B}$  of $WG$ is also nilpotent and Morita equivalent to $WD$. 
Suppose $V$ is a finitely generated $B$-module, and $V'$ is the $kD$-module corresponding to
$V$ under this Morita equivalence.
Then the stable endomorphism ring of $V$ is isomorphic to $k$ if and only if the
stable endomorphism ring of $V'$ is isomorphic to $k$. 
Moreover, it follows for example from \cite[Prop. 2.5]{bl} that $R(G,V)\cong R(D,V')$.
By Theorem \ref{thm:supermain} and \cite[Thm. 1.1]{diloc},
this implies that $R(G,V)\cong WG^{\mathrm{ab},p}$.
\hspace*{\fill} $\Box$

\bigskip

The second goal of this paper is to
give an explicit description of the universal deformations of all endo-trivial $kG$-modules 
$V$ in the case when $G$ is a $p$-group and the group of equivalence classes of endo-trivial modules 
$T(G)$ has a non-trivial torsion subgroup. By \cite[Thm. 1.1]{carl1.5}, 
this means that either $G$ is cyclic of order at least $3$, or $p = 2$ and $G = \D$ is a semidihedral or 
generalized quaternion $2$-group. We first treat the case of cyclic $p$-groups in the following example
(the other cases will be done in Sections \ref{s:semidih} - \ref{s:quat}).

\begin{bigexample}
\label{ex:cyclic}
Let $G=\langle \sigma\rangle$ be a cyclic $p$-group of order $p^d\ge 3$.
By \cite[Cor. 8.8]{dade}, the indecomposable endo-trivial $kG$-modules are isomorphic to $\Omega^n(k)$ for some positive integer $n$.
Since $\Omega^2(k)\cong k$, the only indecomposable endo-trivial $kG$-modules are, up to isomorphism, given by the trivial simple $kG$-module $V_0=k$ and its first syzygy
$V_1=\Omega(V_0)$. Note that the stable Auslander-Reiten quiver of $kG$ is a finite one-tube with $p^d-1$ vertices and that $V_0$ and $V_1$ lie at the two ends of this one-tube.

By \cite[Sect. 1.4]{maz1}, it follows that $R(G,V_0)$ is isomorphic to $WG$
and the universal deformation of $V_0$ is given by the isomorphism class of $U(G,V_0)=WG$ which is viewed as an $R(G,V_0)G$-module on which 
$\sigma$ acts as multiplication by $\sigma$. On the other hand, if $V_1=\Omega(V_0)$, we can use Remark \ref{rem:syzygylift} to construct a lift of $V_1$ over $W$.
More explicitly, a lift of $V_1$ over $W$ is given by
the augmentation ideal $U_1$ of $WG$. In particular, $U_1 = WG(1-\sigma)$. By Theorem \ref{thm:supermain},
$R(G,V_1)\cong WG$ and the universal deformation of $V_1$ is given by the isomorphism class of 
$$U(G,V_1)=U_1\otimes_WWG=WG(1-\sigma) \otimes_W WG$$
on which $\sigma$ acts diagonally. 
In particular, a representation of $U(G,V_1)$ is given by
\begin{eqnarray*}
\rho_{U(G,V_1)}:\qquad G &\longrightarrow &\mathrm{GL}_{p^d-1}(WG)\\
\sigma&\mapsto& \left(\begin{array}{ccccc}
0&0&\cdots&0&-\sigma\\ 
\sigma&0&\cdots&0&-\sigma\\
0&\ddots&\ddots&\vdots&\vdots\\
\vdots&\ddots&\sigma&0&-\sigma\\
0&\cdots&0&\sigma&-\sigma\end{array}\right).
\end{eqnarray*}
\end{bigexample}


\section{The semi-dihedral  $2$-groups and their endo-trivial modules}
\label{s:semidih}
\setcounter{equation}{0}

Fix an integer $d\ge 4$, and let $\SD$ be a semidihedral group of order $2^d$ with the following presentation
\begin{equation}
\label{eq:semid}
\SD=\langle x,y\;|\; x^{2^{d-1}}=y^2=1, yxy^{-1}= x^{2^{d-2}-1}\rangle\,.
\end{equation}
Then $\SD=\langle yx,y\rangle$ where $yx$ has order 4 and $y$ has order 2. 
Define $z=x^{2^{d-2}}$ to be the unique non-trivial central element of $\SD$, which is also
the unique involution of $\SD$. Define
\begin{equation}
\label{eq:SDab2}
\overline{\SD}=\SD^{\mathrm{ab},2} = \langle \overline{yx},\overline{y}\rangle = \{\overline{1},\overline{x},\overline{y},\overline{yx}\}
\end{equation}
where for $g\in\SD$, $\overline{g}$ denotes its image in $\overline{\SD}$.

Let $k$ be a field of characteristic 2 and let $W$ be a complete discrete valuation ring of characteristic 0 that has $k$ as its residue field.

We first describe the endo-trivial $k\SD$-modules using \cite[Sect. 7]{CT}.

\begin{bigrem}
\label{rem:endotrivSD}
Define
\begin{equation}
\label{eq:HE}
H=\langle x^{2^{d-3}}, yx\rangle\quad\mbox{and}\quad E=\langle y,z\rangle.
\end{equation}
Then $E$ is the only elementary abelian subgroup of rank 2 of $\SD$ up to conjugacy,
and $H$ represents the unique conjugacy class of quaternion subgroups of order 8 of $\SD$.
Let $T(\SD)$ denote the group of equivalence classes of 
endo-trivial $k\SD$-modules. 
Consider the restriction map
$$\Xi_{\SD}:T(\SD)\to T(E)\times T(H)\cong \mathbb{Z}\times \mathbb{Z}/4.$$
By the proof of \cite[Thm. 7.1]{CT}, $\Xi_{\SD}$ is injective and the image of $\Xi_{\SD}$ is generated by $(\Omega_E,\Omega_H)$ and 
$(-\Omega_E,\Omega_H)$. 
Define
\begin{equation}
\label{eq:YL}
Y=k\;\SD/\langle y\rangle
\qquad\mbox{and}\qquad L=\mathrm{rad}(Y)
\end{equation}
where $\SD/\langle y \rangle$ denotes the set of distinct left cosets of 
$\langle y\rangle$ in $\SD$ and $Y$ is the corresponding permutation module for $\SD$
over $k$. It follows from \cite[Sect. 7]{CT} that
\begin{equation}
\label{eq:L!SD}
\mathrm{Res}^{\SD}_H\, L \cong \Omega^1_H(k)\oplus \mathrm{(free)}\quad\mbox{and}\quad
\mathrm{Res}^{\SD}_E\, L \cong \Omega^{-1}_E(k)\oplus \mathrm{(free)}.
\end{equation}
Hence $T(\SD)$ is isomorphic to $\mathbb{Z}\oplus \mathbb{Z}/2$, and it is generated by $[\Omega^1_{\SD}(k)]$ and
$[\Omega^1_{\SD}(L)]$. Since $[\Omega^1_{\SD}(L)]$ has order 2, it follows that each element of $T(\SD)$ is of the form 
$[\Omega^i_{\SD}(k)]$ or $[\Omega^i_{\SD}(L)]$ for some $i\in\mathbb{Z}$.
\end{bigrem}

The following remark gives a different description of the $k\SD$-module $L$ in 
(\ref{eq:YL}) to highlight the similarity between the endo-trivial modules for the 
semidihedral 2-groups and the generalized quaternion 2-groups (see Remark
\ref{rem:endotrivQ}).

\begin{bigrem}
\label{rem:vary}
For all $i\in\mathbb{Z}$, define $b_i\in L$ to be the element
$b_i = x^i\langle y\rangle - x^{i-1}\langle y\rangle$.
Then $\{b_1,\ldots, b_{2^{d-1}-1}\}$ is a $k$-basis for $L$, and we have the relations
\begin{equation}
\label{eq:rel}
b_{2^{d-1}} = -\sum_{j=1}^{2^{d-1}-1} b_j\qquad \mbox{and}
\quad b_i=b_{i+2^{d-1}}\quad\mbox{for all $i$}.
\end{equation}
Let $X=\langle x\rangle$ be the unique cyclic subgroup of $\SD$ of order $2^{d-1}$, let
$\Tr_X=\sum_{j=0}^{2^{d-1}-1}x^j$ be the trace element of $X$, and let $\overline{S}$ 
be the ring $kX/(k\cdot \Tr_X)$. For all $i\in\mathbb{Z}$, define $c_i\in \overline{S}$
to be the image of $x^{i-1}$ in $\overline{S}$.
Then $\{c_1,\ldots, c_{2^{d-1}-1}\}$ is a $k$-basis for $\overline{S}$, and
the $c_i$ satisfy the same relations as the $b_i$ in (\ref{eq:rel}).
Thus, there is a $kX$-module isomorphism
$$f:\quad L \to \overline{S},\qquad f(b_i)=c_i\quad\mbox{for }1\le i\le 2^{d-1}-1.$$
Using the action of $y$ on $L$, we can then use $f$ to define a compatible action
of $y$ on $\overline{S}$ as follows:
\begin{equation}
\label{eq:yacts}
y\cdot c_i \quad =\quad \sum_{j=(i-1)(2^{d-2}-1)+1}^{i(2^{d-2}-1)} c_j.
\end{equation}
Let $S=WX/ (W\cdot \Tr_X)$, and
let $\sigma_x$ denote the $W$-linear automorphism of $S$ given by multiplication by $x$.
Finding a lift of $L$ over $W$ is then equivalent to
constructing a $W$-linear automorphism $\sigma_y$ of $S$ such that $\sigma_x$ and $\sigma_y$ satisfy
the relations of $x$ and $y$ in $\D$ and such that  $\sigma_y$ reduces 
(modulo the maximal ideal of $W$) to the action of 
$y$ on $\overline{S}$ given by (\ref{eq:yacts}).
Let $L_W$ be the kernel of the $W\SD$-module homomorphism
$\pi_W:W\;\SD/\langle y\rangle\to W$, defined by $\pi_W(x^i\langle y\rangle) = 1_W$ 
for all $0\le i \le 2^{d-1}-1$. Then $L_W$ defines a lift of $L$ over $W$, and we can
lift the $k$-basis $\{b_1,\ldots,b_{2^{d-1}-1}\}$ of $L$ to a corresponding
$W$-basis $\{\hat{b}_1,\ldots,\hat{b}_{2^{d-1}-1}\}$ of $L_W$ that satisfies the same
relations as in (\ref{eq:rel}). This means that we can also lift the $k$-basis 
$\{c_1,\ldots,c_{2^{d-1}-1}\}$ of $\overline{S}$ to a corresponding
$W$-basis $\{\hat{c}_1,\ldots,\hat{c}_{2^{d-1}-1}\}$ of $S$ and define
$\sigma_y(\hat{c}_i)=\sum_{j=(i-1)(2^{d-2}-1)+1}^{i(2^{d-2}-1)} \hat{c}_j$
to obtain that $S$ defines a lift of the $k\SD$-module $\overline{S}$ over $W$.
\end{bigrem}

By Theorem \ref{thm:supermain}, we know that the universal deformation ring of every endo-trivial $k\SD$-module is isomorphic to
$W\overline{\SD}$ where $\overline{\SD}$ is as in (\ref{eq:SDab2}). The following result gives an explicit description of the universal
deformation of every endo-trivial $k\SD$-module $V$. Since projective $k\SD$-modules can always be lifted over $W$ (see, for example, 
\cite[Props. (6.5) and (6.7)]{CR}), we can concentrate on the indecomposable endo-trivial $k\SD$-modules. 
Using Remark \ref{rem:endotrivSD}, the following result is a consequence of Theorem \ref{thm:supermain} and Remark \ref{rem:syzygylift}.

\begin{bigprop}
\label{prop:unidefSD}
Let $R=W\overline{\SD}$. 
If $V_0=k$ is the trivial simple $k\SD$-module, let $V_{0,W}=W$ with trivial $\SD$-action.
If $V_1=L$ is as in $(\ref{eq:YL})$, let $V_{1,W}=\mathrm{Ker}(\pi_W)$ where 
$\pi_W:W\;\SD/\langle y\rangle\to W$
is the $W\SD$-module homomorphism defined by $\pi_W(x^a\langle y\rangle) = 1_W$ for all $0\le a \le 2^{d-1}-1$. 

Let $i\in\mathbb{Z}$ and $j\in\{0,1\}$. Then $\Omega^i_{W\SD}(V_{j,W})$, as defined in Remark 
$\ref{rem:syzygylift}$, is a lift of $\Omega_{\SD}^i(V_j)$ over $W$. Moreover, the universal deformation of $\Omega_{\SD}^i(V_j)$ is given by the isomorphism class of 
the $R\,\SD$-module $U(\SD,\Omega_{\SD}^i(V_j))=\Omega^i_{W\SD}(V_{j,W})\otimes_W R$ on which $x$ $($resp. $y$$)$ acts diagonally as multiplication by 
$x\otimes\overline{x}$ $($resp. $y\otimes\overline{y}$$)$.
\end{bigprop}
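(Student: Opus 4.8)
The plan is to assemble Proposition \ref{prop:unidefSD} from the three ingredients already in place: Theorem \ref{thm:supermain}(ii), which says that tensoring any lift over $W$ with the group algebra $W\overline{\SD}$ yields the universal deformation; Remark \ref{rem:syzygylift}, which constructs explicit lifts of syzygies and cosyzygies from a given lift; and Remark \ref{rem:endotrivSD}, which tells us that every indecomposable endo-trivial $k\SD$-module is isomorphic to $\Omega^i_{\SD}(V_j)$ for some $i\in\mathbb{Z}$ and $j\in\{0,1\}$ with $V_0=k$ and $V_1=L$. So the proof is essentially a matter of checking that the hypotheses of these three results are met and then chaining them together.

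First I would verify that $V_{0,W}=W$ (trivial action) is a lift of $V_0=k$ over $W$ — immediate — and that $V_{1,W}=\mathrm{Ker}(\pi_W)$ is a lift of $V_1=L$ over $W$. The latter is exactly the content of the construction in Remark \ref{rem:vary}: $\pi_W\colon W\,\SD/\langle y\rangle\to W$ is a surjection of $W\SD$-modules, its kernel is $W$-free (being a submodule of a free $W$-module of finite rank with free quotient $W$), and reducing mod $m_W$ recovers the short exact sequence defining $L=\mathrm{rad}(Y)$, so $k\otimes_W V_{1,W}\cong L$ as $k\SD$-modules. Both $V_0$ and $V_1$ are endo-trivial by Remark \ref{rem:endotrivSD}, hence have $\underline{\mathrm{End}}_{k\SD}(V_j)\cong k$, so Proposition \ref{prop:blchin} applies and we may work with weak deformations throughout.

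Next, fix $i\in\mathbb{Z}$ and $j\in\{0,1\}$ and set $W=\Omega^i_{W\SD}(V_{j,W})$ (obtained by iterating the $\Omega_{WG}$ and $\Omega^{-1}_{WG}$ constructions of Remark \ref{rem:syzygylift} $|i|$ times). By that remark each such iterate is a free $W$-module whose reduction mod $m_W$ is $\Omega^i_{\SD}(V_j)$, so $\Omega^i_{W\SD}(V_{j,W})$ is a lift of $\Omega^i_{\SD}(V_j)$ over $W$. Since $\Omega^i_{\SD}(V_j)$ is endo-trivial, Lemma \ref{lem:defhelp}(i) (applied $|i|$ times) gives $\underline{\mathrm{End}}_{k\SD}(\Omega^i_{\SD}(V_j))\cong k$, so it too has a universal deformation ring, which by Theorem \ref{thm:supermain}(i) is $W\overline{\SD}=R$. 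Now Theorem \ref{thm:supermain}(ii), applied with $G=\SD$, the module $\Omega^i_{\SD}(V_j)$ in place of $V$, and $\Omega^i_{W\SD}(V_{j,W})$ in place of $V_W$, states precisely that the isomorphism class of $\Omega^i_{W\SD}(V_{j,W})\otimes_W R$ with diagonal $\SD$-action is the universal deformation; spelling out the diagonal action on the group-algebra tensor factor gives that $x$ acts as $x\otimes\overline{x}$ and $y$ as $y\otimes\overline{y}$, which is the assertion.

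I do not expect a serious obstacle here — the proposition is genuinely a corollary. The one point needing a little care is the claim in Remark \ref{rem:endotrivSD} that the $\Omega$-orbits of $k$ and $L$ exhaust the indecomposable endo-trivial modules, so that the case division into $(i,j)$ is complete; but this has already been established there via the injectivity of $\Xi_{\SD}$ and the structure of $T(\SD)\cong\mathbb{Z}\oplus\mathbb{Z}/2$, so I would simply cite it. A secondary bookkeeping point is confirming that the lifts produced by Remark \ref{rem:syzygylift} are well-defined up to the ambiguity allowed in a deformation — but since $\underline{\mathrm{End}}_{k\SD}$ of each module is $k$, Proposition \ref{prop:blchin} guarantees that the isomorphism class of the $R\SD$-module is all that matters, so no choices made along the way affect the stated universal deformation.
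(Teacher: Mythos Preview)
Your proposal is correct and follows exactly the route the paper takes: the paper states (without a formal proof) that the proposition is a consequence of Theorem~\ref{thm:supermain} and Remark~\ref{rem:syzygylift}, together with Remark~\ref{rem:endotrivSD}, and you have simply spelled out how these ingredients combine. The only quibble is the typo where you write ``set $W=\Omega^i_{W\SD}(V_{j,W})$'', reusing $W$ for both the base ring and the lift; otherwise nothing is missing.
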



\section{The quaternion  group of order $8$ and its endo-trivial modules}
\label{s:quat8}
\setcounter{equation}{0}

Suppose $Q_8$ is a quaternion group of order 8 with the following presentation
\begin{equation}
\label{eq:quat8}
Q_8=\langle x,y\;|\; x^4=1, x^2=y^2=(yx)^2\rangle.
\end{equation}
Then $Q_8=\langle yx,y\rangle$ where $yx$ and $y$ both have order 4. Define
\begin{equation}
\label{eq:Q8ab2}
\overline{Q}_8=Q_8^{\mathrm{ab},2} = \langle \overline{yx},\overline{y}\rangle = \{\overline{1},\overline{x},\overline{y},\overline{yx}\}
\end{equation}
where for $g\in Q_8$, $\overline{g}$ denotes its image in $\overline{Q}_8$.

Let $k$ be a field of characteristic 2 and let $W$ be a complete discrete valuation ring of characteristic 0 that has $k$ as its residue field.

We first describe the endo-trivial $kQ_8$-modules using \cite[Thm. 6.3]{CT}.

\begin{bigrem}
\label{rem:endotrivQ8}
Let $T(Q_8)$ denote the group of equivalence classes of endo-trivial $kQ_8$-modules. 
\begin{enumerate}
\item[(a)] If $k$ does not contain a primitive cube root of unity, then $T(Q_8)$ is a cyclic group of order 4 that is generated by 
$[\Omega^1_{Q_8}(k)]$.
\item[(b)] If $k$ contains a primitive cube root $\omega$ of unity, then $T(Q_8)$ is isomorphic to $\mathbb{Z}/4\oplus \mathbb{Z}/2$. 
More precisely, let $L$ be the $3$-dimensional $kQ_8$-module with a
representation $\rho_L:Q_8 \to\mathrm{GL}_{3}(k)$ given by
\begin{equation}
\label{eq:rhoL}
\rho_L(x)= \left(\begin{array}{ccc}
1&0&0\\1&1&0\\0&1&1
\end{array}\right),\qquad \rho_L(y)= \left(\begin{array}{ccc}
1&0&0\\\omega&1&0\\0&\omega^2&1
\end{array}\right).
\end{equation}
Then $T(Q_8)$ is generated by $[\Omega^1_{Q_8}(k)]$ and $[\Omega^1_{Q_8}(L)]$, and $[\Omega^1_{Q_8}(L)]$ has order $2$.
In particular, each element of $T(Q_8)$ is of the form $[\Omega^i_{Q_8}(k)]$ or $[\Omega^i_{Q_8}(L)]$ for some $i\in\{0,1,2,3\}$.
\end{enumerate}
\end{bigrem}

The description of $L$ in Remark \ref{rem:endotrivQ8}(b) shows that 
the restriction of $L$ to the maximal cyclic subgroup $X=\langle x\rangle$ of  $Q_8$ 
is isomorphic to $\overline{S}=kX/(k\cdot \Tr_X)$, where $\Tr_X$ is the sum of the 
elements in $X$. Using this isomorphism, we can then explicitly find the $k$-linear
automorphism of $\overline{S}$ corresponding to the action of $y$ on $L$. This
defines a $kQ_8$-module structure on $\overline{S}$ such that
$\overline{S}$ is isomorphic to $L$ as a $kQ_8$-module. As in Remark \ref{rem:vary},
this highlights the similarity between the endo-trivial modules for $Q_8$,
when $k$ contains a primitive cube root of unity, and
the generalized quaternion groups of higher 2-power order (see Remark
\ref{rem:endotrivQ}).

By Theorem \ref{thm:supermain}, we know that the universal deformation ring of every endo-trivial $kQ_8$-module is isomorphic to
$W\overline{Q}_8$ where $\overline{Q}_8$ is as in (\ref{eq:Qab2}). We now give an explicit description of the universal
deformation of every endo-trivial $kQ_8$-module $V$. As in Section \ref{s:semidih}, we can concentrate on the indecomposable 
endo-trivial $kQ_8$-modules.

\begin{bigprop}
\label{prop:unidefQ8}
Let $R=W\overline{Q}_8$. If $V_0=k$ is the trivial simple $kQ_8$-module, let $V_{0,W}=W$ with trivial $Q_8$-action.
\begin{enumerate}
\item[(i)] If $k$ contains a primitive cube root of unity $\omega$, let $\hat{\omega}$ be a primitive cube root of unity in $W$, and
let $V_1=L$ be as in $(\ref{eq:rhoL})$. Then the representation $\rho_{L,W}:Q_8 \to\mathrm{GL}_{3}(W)$ given by
\begin{equation}
\label{eq:rhoLW}
\rho_{L,W}(x) =  \left(\begin{array}{rrr} 1&0&0\\1&-1&2\\0&-1&1\end{array}\right),\qquad
\rho_{L,W}(y) = \left(\begin{array}{rrr} 1&0\;&0\;\\-\hat{\omega}&-1\;&-2\hat{\omega}\\0&\hat{\omega}^2&1\;\\\end{array}\right)
\end{equation}
defines a $WQ_8$-module $V_{1,W}=L_W$ that is a lift of $L=V_1$ over $W$.
\item[(ii)]
Let $i\in\{0,1,2,3\}$ and let $j\in\{0,1\}$ $($resp. $j=0$$)$ if $k$ contains $($resp. does not contain$)$ a primitive cube root of unity $\omega$. Then 
$\Omega^i_{WQ_8}(V_{j,W})$, as defined in Remark 
$\ref{rem:syzygylift}$, is a lift of $\Omega_{Q_8}^i(V_j)$ over $W$. Moreover, the universal deformation of $\Omega_{Q_8}^i(V_j)$ is given by the isomorphism class of 
the $R\,Q_8$-module $U(Q_8,\Omega_{Q_8}^i(V_j))=\Omega^i_{WQ_8}(V_{j,W})\otimes_W R$ on which $x$ $($resp. $y$$)$ acts diagonally as multiplication by 
$x\otimes\overline{x}$ $($resp. $y\otimes\overline{y}$$)$.
\end{enumerate}
\end{bigprop}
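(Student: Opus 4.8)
The plan is to proceed in two stages, mirroring the logic already used for the semidihedral case in Proposition \ref{prop:unidefSD}. First I would dispose of part (i) by a direct verification, and then derive part (ii) as a formal consequence of Theorem \ref{thm:supermain} and Remark \ref{rem:syzygylift}.

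For part (i), the task is purely computational: one must check that the matrices $\rho_{L,W}(x)$ and $\rho_{L,W}(y)$ in $(\ref{eq:rhoLW})$ satisfy the defining relations of $Q_8$ in $(\ref{eq:quat8})$, namely $\rho_{L,W}(x)^4 = I_3$ and $\rho_{L,W}(x)^2 = \rho_{L,W}(y)^2 = (\rho_{L,W}(y)\rho_{L,W}(x))^2$, and that reducing every entry modulo the maximal ideal $m_W$ recovers the matrices $\rho_L(x),\rho_L(y)$ of $(\ref{eq:rhoL})$ (here one uses that $2\in m_W$, that $-1\equiv 1$, and that $\hat\omega$ reduces to $\omega$). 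The reduction statement is immediate by inspection. For the relations, I would compute $\rho_{L,W}(x)^2$ and check it equals the scalar-like matrix one gets also from $\rho_{L,W}(y)^2$; since $W$ is a domain of characteristic $0$, verifying the relations over $W$ is a finite matrix arithmetic. This shows $\rho_{L,W}$ defines a genuine $WQ_8$-module $L_W$, free of rank $3$ over $W$, lifting $L$. (One should remark how these matrices were found: the restriction of $L$ to $X=\langle x\rangle$ is $\overline S = kX/(k\cdot\Tr_X)$, which lifts canonically to $S=WX/(W\cdot\Tr_X)$ with $\sigma_x$ given by multiplication by $x$; one then constructs $\sigma_y$ making the relations hold. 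Because $Q_8=Q_8$ rather than a larger quaternion group, the order-$4$ relation $x^2=y^2$ rather than $yxy^{-1}=x^{-1}$ is what must be solved, and — as anticipated in Theorem \ref{thm:dontknowhowtosay} and in the discussion preceding it — for $Q_8$ no quadratic equation $(\ref{eq:goodeq})$ is needed; an explicit $\sigma_y$ can be written down by hand, which is exactly what $(\ref{eq:rhoLW})$ records.)

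For part (ii), I would argue as follows. When $k$ does not contain a primitive cube root of unity, Remark \ref{rem:endotrivQ8}(a) says every indecomposable endo-trivial $kQ_8$-module is $\Omega^i_{Q_8}(k)$ for some $i\in\{0,1,2,3\}$ (using $\Omega^4_{Q_8}(k)\cong k$); when $k$ does contain $\omega$, Remark \ref{rem:endotrivQ8}(b) adds the orbit of $L$. In either case we have an explicit lift of $V_j$ over $W$: $V_{0,W}=W$ with trivial action, and (in case (i)) $V_{1,W}=L_W$ from part (i). By Remark \ref{rem:syzygylift}, iterating the constructions $\Omega_{WQ_8}(-)$ and $\Omega^{-1}_{WQ_8}(-)$ produces an explicit lift $\Omega^i_{WQ_8}(V_{j,W})$ of $\Omega^i_{Q_8}(V_j)$ over $W$ for every $i$. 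Now $V:=\Omega^i_{Q_8}(V_j)$ is endo-trivial, hence by Theorem \ref{thm:supermain}(i) its universal deformation ring is $W\overline{Q}_8=R$, and by Theorem \ref{thm:supermain}(ii) the isomorphism class of $V_W\otimes_W WQ_8^{\mathrm{ab},2} = \Omega^i_{WQ_8}(V_{j,W})\otimes_W R$ with diagonal $Q_8$-action is the universal deformation of $V$. Finally I would note that $\overline{Q}_8=\langle\overline{yx},\overline{y}\rangle$ is generated by the images of $x$ and $y$, so "diagonal action of $g$" means multiplication by $g$ on the first factor and by $\overline g$ on the second; in particular $x$ acts as $x\otimes\overline x$ and $y$ as $y\otimes\overline y$, which is the asserted description.

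The only genuine obstacle is the matrix verification in part (i) — confirming that the specific $3\times 3$ matrices in $(\ref{eq:rhoLW})$ satisfy all the $Q_8$-relations over $W$ and not merely modulo $m_W$. Everything else is a bookkeeping application of results already established in the excerpt. I would present the relation-checking compactly, perhaps displaying $\rho_{L,W}(x)^2$ and $(\rho_{L,W}(y)\rho_{L,W}(x))^2$ explicitly and observing they coincide, rather than writing out every intermediate product.
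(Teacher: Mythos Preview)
Your proposal is correct and follows essentially the same approach as the paper: verify directly that the matrices in (\ref{eq:rhoLW}) satisfy the $Q_8$-relations and reduce modulo $m_W$ to those in (\ref{eq:rhoL}), then deduce part (ii) from Theorem \ref{thm:supermain} and Remark \ref{rem:syzygylift}. The paper's proof is in fact even terser than yours, omitting the explicit relation-checking and the commentary on how the lift was found.
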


\begin{proof}
Since the reduction modulo $m_W$ of the matrices given in $(\ref{eq:rhoLW})$ gives the matrices given in $(\ref{eq:rhoL})$ and
since they satisfy the relations of $x$ and $y$ in $Q_8$, it follows that $V_{1,W}=L_W$ defines a lift of $L=V_1$ over $W$.
Part (ii) of Proposition \ref{prop:unidefQ8} follows now  from Theorem \ref{thm:supermain} and Remark \ref{rem:syzygylift}.
\end{proof}


\section{The generalized quaternion  $2$-groups and their endo-trivial modules}
\label{s:quat}
\setcounter{equation}{0}

Fix an integer $d\ge 4$, and let $\Q$ be a generalized quaternion group of order $2^d$ with the following presentation
\begin{equation}
\label{eq:quat}
\Q=\langle x,y\;|\; x^{2^{d-1}}=1, x^{2^{d-2}}=y^2, yxy^{-1}= x^{-1}\rangle.
\end{equation}
Then $\Q=\langle yx,y\rangle$ where $yx$ and $y$ both have order 4.
Define $z=x^{2^{d-2}}$ to be the unique non-trivial central element of $\Q$, which is also
the unique involution of $\Q$. Define
\begin{equation}
\label{eq:Qab2}
\overline{\Q}=\Q^{\mathrm{ab},2} = \langle \overline{yx},\overline{y}\rangle = \{\overline{1},\overline{x},\overline{y},\overline{yx}\}
\end{equation}
where for $g\in\Q$, $\overline{g}$ denotes its image in $\overline{\Q}$.

Let $k$ be a field of characteristic 2 and let $W$ be a complete discrete valuation ring of characteristic 0 that has $k$ as its residue field.
Let $\mathbb{F}_2$ be the prime subfield of $k$ with two elements,
and let $\mathbb{Z}_2$ be the 2-adic integers such that $\mathbb{Z}_2\subseteq W$.

The following remark summarizes the description of the endo-trivial $k\Q$-modules from \cite[Sect. 6]{CT}.

\begin{bigrem}
\label{rem:endotrivQ}
Let $T(\Q)$ denote the group of equivalence classes of endo-trivial $k\Q$-modules. 
Then $T(\Q)$ is isomorphic to $\mathbb{Z}/4\oplus \mathbb{Z}/2$. More precisely, the group $T(\Q)$ is constructed as follows.

\begin{enumerate}
\item[(a)] Let $X=\langle x\rangle$ be the unique cyclic subgroup of $\Q$ of order $2^{d-1}$, let
$\Tr_X=\sum_{j=0}^{2^{d-1}-1}x^j$ be the trace element of $X$, and let $\overline{S}$ be the ring
$kX/(k\cdot \Tr_X)$. 
Let $*$ denote the involution of $\overline{S}$ that is induced by inversion on $X$.
In the proof of \cite[Lemma 6.4]{CT} 
an element $\gamma\in \mathbb{F}_2X$ is found that satisfies
\begin{equation}
\label{eq:betamod2}
\gamma\,\gamma^*= x^{2^{d-2}} \mod (\mathbb{F}_2\cdot \Tr_X)\,.
\end{equation}
Moreover, it is shown that every element $\gamma\in \mathbb{F}_2 X$ satisfying $(\ref{eq:betamod2})$
defines a $k\Q$-module structure on $\overline{S}=kX/(k\cdot \Tr_X)$.
Namely, the action of $y$ on $s\in\overline{S}$ is given by
$y\cdot s=\gamma\,s^*$.
Since $\overline{S}\cong \Omega^1_{\langle x\rangle}(k)$ as $kX$-module, 
this means that any such $k\Q$-module is an endo-trivial $k\Q$-module of $k$-dimension $2^{d-1}-1$.

\item[(b)] Let $L$ be an endo-trivial $k\Q$-module of $k$-dimension $2^{d-1}-1$ such that
$\mathrm{Res}^{\Q}_{\langle x\rangle}\,L\cong\Omega^1_{\langle x\rangle}(k)$.
There are two conjugacy classes of quaternion subgroups of $\Q$ of order 8, represented by
\begin{equation}
\label{eq:twoQ8}
H=\langle yx, x^{2^{d-3}} \rangle\quad\mbox{and}\quad H'=\langle y, x^{2^{d-3}} \rangle.
\end{equation}
Consider the restriction map
$$\Xi_{\Q}:T(\Q)\to T(H)\times T(H')\cong \mathbb{Z}/4\times \mathbb{Z}/4.$$
By the proof of \cite[Thm. 6.5]{CT}, $\Xi_{\Q}$ is injective and the image of $\Xi_{\Q}$ is generated by 
$(\Omega_H,\Omega_{H'})$ and $(\Omega_H,-\Omega_{H'})$. 
Moreover, $\Xi_{\Q}([L])=\pm (\Omega_H,-\Omega_{H'})$. In other words, there exist $\epsilon_L\in\{\pm 1\}$ such that
\begin{equation}
\label{eq:L!Q}
\mathrm{Res}^{\Q}_H\, L \cong \Omega^{\epsilon_L}_H(k)\oplus \mathrm{(free)}\quad\mbox{and}\quad
\mathrm{Res}^{\Q}_{H'}\, L \cong \Omega^{-\epsilon_L}_{H'}(k)\oplus \mathrm{(free)}.
\end{equation}
Hence $T(\Q)$ is generated by $[\Omega^1_{\Q}(k)]$ and $[\Omega^1_{\Q}(L)]$, and $[\Omega^1_{\Q}(L)]$ has order $2$.
In particular, each element of $T(\Q)$ is of the form $[\Omega^i_{\Q}(k)]$ or $[\Omega^i_{\Q}(L)]$ for some $i\in\{0,1,2,3\}$.
\end{enumerate}
\end{bigrem}

By Theorem \ref{thm:supermain}, we know that the universal deformation ring of every endo-trivial $k\Q$-module is isomorphic to
$W\,\overline{\Q}$ where $\overline{\Q}$ is as in (\ref{eq:Qab2}). We now give an explicit description of the universal
deformation of every endo-trivial $k\Q$-module $V$. As in Sections \ref{s:semidih} and \ref{s:quat8}, we can concentrate on the indecomposable 
endo-trivial $k\Q$-modules. 

We first construct an explicit lift over $W$ of any endo-trivial $k\Q$-module of $k$-dimension $2^{d-1}-1$ as 
defined in Remark \ref{rem:endotrivQ}(a). The crucial step is to 
lift any element $\gamma\in\mathbb{F}_2X$ satisfying $(\ref{eq:betamod2})$ to a corresponding
element $\beta\in\mathbb{Z}_2X$. We need two propositions.

\begin{bigprop}
\label{prop:betaliftpowerseries}
For $j\ge 0$, define $p_j\in \mathbb{Z}[t]$ inductively by $p_0=2$, $p_1=t$ and the recurrence relation
\begin{equation}
\label{eq:pj}
p_{j+1} = tp_j-p_{j-1} \qquad \mbox{for $j\ge1$}.
\end{equation}
Let $\tau=\sum_{j=0}^{2^{d-2}-1} p_j$ and let $a = 1+2^{d-4}\,t$.
Then the equation
\begin{equation}
\label{eq:crucial}
b^2 + tab + a^2 = 1-\tau
\end{equation}
has a solution $b$ in $\mathbb{Z}_2[[t]]$. Moreover, the discriminant $\Delta(t)$ of $(\ref{eq:crucial})$ is a polynomial in $\mathbb{Z}[t]$
such that  $\Delta(t) = t^2 ( (1-t)^2-8)$ if $d=4$, and if $d\ge 5$ then
$\Delta(t)=t^2(1+4 \,m(t) )$ for some polynomial $m(t)\in\mathbb{Z}[t]$ whose constant coefficient is divisible by $2$.
\end{bigprop}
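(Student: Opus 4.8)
The plan is to analyze the discriminant of $(\ref{eq:crucial})$, regarded as a quadratic in $b$:
\[
\Delta(t)=(ta)^2-4\bigl(a^2-(1-\tau)\bigr)=(t^2-4)a^2+4(1-\tau)\in\mathbb{Z}[t].
\]
Once we know $\Delta(t)=t^2q(t)$ with $q(t)$ a square in $\mathbb{Z}_2[[t]]$, say $q(t)=\epsilon(t)^2$, the element $b=\tfrac12\bigl(t\epsilon(t)-ta\bigr)$ will solve $(\ref{eq:crucial})$ provided it lies in $\mathbb{Z}_2[[t]]$. So the proof splits into three steps: (1) compute the low-degree coefficients of $\tau$ (and note $4\tau\in 4\mathbb{Z}[t]$); (2) deduce $t^2\mid\Delta(t)$ and the asserted shape of $q(t)=\Delta(t)/t^2$; (3) show $q(t)$ is a square in $\mathbb{Z}_2[[t]]$ and that $\tfrac12(t\epsilon(t)-ta)\in\mathbb{Z}_2[[t]]$.

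For step (1), I would use that the recurrence $(\ref{eq:pj})$ with $p_0=2,\ p_1=t$ is solved by $p_j=\zeta^j+\zeta^{-j}$ under $t=\zeta+\zeta^{-1}$; equivalently $p_j(2\cos\theta)=2\cos(j\theta)$. Summing the geometric series gives a closed form for $\tau$, but all that is needed is that, since $2^{d-2}\equiv 0\pmod 4$, one has $\tau(0)=0$, $\tau'(0)=-2^{d-3}$, and the coefficient of $t^2$ in $\tau$ equals $2^{2d-7}-2^{d-4}$. For step (2), substituting these and $a=1+2^{d-4}t$ into $\Delta(t)=(t^2-4)a^2+4(1-\tau)$, the constant and linear terms cancel, so $\Delta(t)=t^2q(t)$ with $q(t)\in\mathbb{Z}[t]$ and $q(0)=1+2^{d-2}-2^{2d-6}-2^{2d-5}$; moreover, apart from $-4\tau$, the only contributions to $\Delta(t)$ of degree $\ge 3$ are the monomials $2^{d-3}t^3$ and $2^{2d-8}t^4$ of $t^2a^2$. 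When $d\ge 5$, each of $2^{d-2},2^{2d-6},2^{2d-5}$ is divisible by $8$ and each of $2^{d-3},2^{2d-8}$ by $4$, so $q(0)\equiv 1\pmod 8$ and every remaining coefficient of $q$ is divisible by $4$; hence $q(t)=1+4m(t)$ with $m\in\mathbb{Z}[t]$ and $m(0)=(q(0)-1)/4$ even. When $d=4$ one has $\tau=t^3+t^2-2t$ and $a=1+t$, whence $\Delta(t)=t^2\bigl((1-t)^2-8\bigr)$ by a direct computation.

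For step (3), in both cases $q(0)\equiv 1\pmod 8$, so $q(0)$ is a square in $\mathbb{Z}_2^{\times}$, say $q(0)=\eta^2$. If $d\ge 5$, I would write $q(t)=q(0)\bigl(1+4s(t)\bigr)$ with $s(t)\in t\mathbb{Z}_2[[t]]$ and set $\epsilon(t)=\eta\sum_{n\ge 0}\binom{1/2}{n}4^{n}s(t)^{n}$; this converges in $\mathbb{Z}_2[[t]]$ because $\binom{1/2}{n}4^{n}=\frac{(-1)^{n-1}}{2n-1}\binom{2n}{n}$, whose $2$-adic valuation is $v_2\binom{2n}{n}$, the number of $1$'s in the binary expansion of $n$ (in particular $\ge 0$ always and $\ge 1$ for $n\ge 1$), and $\epsilon(t)^2=q(t)$. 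If $d=4$, I would first extract the square $(1-t)^2$: writing $q(t)=(1-t)^2w(t)$, one has $w(0)=-7$ and $w(t)-w(0)\in 8\mathbb{Z}_2[[t]]$, so $w(t)=w(0)\bigl(1+8v(t)\bigr)$ with $v(t)\in t\mathbb{Z}_2[[t]]$, and the same binomial expansion (with $8v(t)$ in place of $4s(t)$, where convergence is only easier) gives $\epsilon(t)\in\mathbb{Z}_2[[t]]$ with $\epsilon(t)^2=q(t)$. Either way I set $b=\tfrac12\bigl(t\epsilon(t)-ta\bigr)$; then $(2b+ta)^2=t^2\epsilon(t)^2=\Delta(t)$, so $b$ satisfies $(\ref{eq:crucial})$, and to see $b\in\mathbb{Z}_2[[t]]$ I reduce modulo $2$: the image of $\epsilon(t)$ in $\mathbb{F}_2[[t]]$ squares to the image of $q(t)$, which is $1$ if $d\ge 5$ and $(1+t)^2$ if $d=4$, so since $\mathbb{F}_2[[t]]$ is a domain the image of $\epsilon(t)$ equals that of $a$; hence $t\epsilon(t)-ta\in 2\mathbb{Z}_2[[t]]$ and $b\in\mathbb{Z}_2[[t]]$.

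The hard part is step (3): $\mathbb{Z}_2[[t]]$ is not a discrete valuation ring, and the formal series $\sqrt{1+X}=\sum_n\binom{1/2}{n}X^{n}$ has denominators that are powers of $2$, so one must check that the explicit factor $4$ in $q(t)=q(0)(1+4s(t))$ — or, when $d=4$, the square $(1-t)^2$ pulled out first — cancels those denominators exactly. This is precisely what the valuation identity $v_2\bigl(\binom{1/2}{n}4^{n}\bigr)=v_2\binom{2n}{n}$ delivers, and it is also the reason the case $d=4$, in which the coefficient of $t$ in $q(t)-q(0)$ is divisible by $2$ but not by $4$, has to be treated separately.
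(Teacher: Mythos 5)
Your proof is correct and follows essentially the same route as the paper: compute the discriminant $\Delta(t)=(t^2-4)a^2+4(1-\tau)$, verify that the constant and linear coefficients vanish via the low-order Taylor coefficients of $\tau$, observe that $q(t)=\Delta(t)/t^2$ has the shape $1+4m(t)$ (resp. $(1-t)^2-8$), and then apply the $2$-integral binomial series to obtain a square root in $\mathbb{Z}_2[[t]]$ and hence a solution $b$ via the quadratic formula. The only cosmetic differences are that you normalize $q(t)=q(0)(1+4s(t))$ with $s(t)\in t\,\mathbb{Z}_2[[t]]$ before expanding (the paper works directly with $1+4m(t)$, $m(t)\in(2,t)\,\mathbb{Z}_2[[t]]$) and you make the $2$-adic integrality of $\binom{1/2}{n}4^n$ explicit via $v_2\binom{2n}{n}$, whereas the paper simply notes it lies in $2\mathbb{Z}_2$ for $n\ge1$.
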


\begin{proof}
Consider the recurrence relation for $p_j$ in (\ref{eq:pj}). The associated quadratic equation
\begin{equation}
\label{eq:niceq}
Y^2 - t Y+ 1 = 0
\end{equation}
in the variable $Y$
has two solutions $y_1, y_2$ which are integral units over the ring $\mathbb{Z}_2[[t]]$. Note that $y_1^0+y_2^0=2=p_0$ and
$y_1^1+y_2^1=t=p_1$.
Since $y_1$ and $y_2$ are roots of (\ref{eq:niceq}), it follows that the recurrence relation (\ref{eq:pj}) holds for $y_1^j + y_2^j$,
which implies that
\begin{equation}
\label{eq:pjbetter}
p_j = y_1^j + y_2^j
\end{equation}
for all $j\ge 0$.
Using that $(1-y_1)(1-y_2)=2-t$ and that $y_1,y_2$ satisfy (\ref{eq:niceq}), we rewrite $\tau$ as
\begin{eqnarray}
\tau &=& \sum_{j = 0}^{2^{d-2} - 1} ( y_1^j + y_2^j )\; = \;  \frac{y_1^{2^{d-2}} - 1}{y_1 - 1} + \frac{y_2^{2^{d-2}} - 1}{y_2 - 1} \nonumber\\
&=& \frac{(y_2 -1) \left ( \sum_{a = 1}^{2^{d-3}} \genfrac(){0pt}{1}{2^{d-3}}{a} 
(ty_1)^a (-1)^a \right )  + (y_1 -1) \left ( \sum_{a= 1}^{2^{d-3}} \genfrac(){0pt}{1}{2^{d-3}}{a} (ty_2)^a (-1)^a \right )}{2 - t}\nonumber\\
&=& \frac{t \, g(t)}{2 - t}\nonumber
\end{eqnarray}
for some $g(t) \in \mathbb{Z}_2[[t]]$.  Since $\tau \in \mathbb{Z}_2[[t]]$ and $\mathbb{Z}_2[[t]]$ is a unique factorization domain, 
there exists $h(t) \in \mathbb{Z}_2[[t]]$ such that $g(t) = (2-t)\, h(t)$.  
Let $A=\mathbb{Z}_2[[t]][y_1,y_2]$. Using that $y_1,y_2$ satisfy (\ref{eq:niceq}), we have $t^2 y_1^2 \equiv - t^2\mod t^3 A$ and 
$t^2 y_2^2 \equiv -t^2\mod t^3 A$. Hence we obtain the congruence relation
\begin{eqnarray}
t(2-t)h(t)  &\equiv &  (y_2 -1) \left ( \sum_{a = 1}^{2} \genfrac(){0pt}{0}{2^{d-3}}{a} (ty_1)^a (-1)^a \right )  + (y_1 -1) \left ( \sum_{a = 1}^{2} \genfrac(){0pt}{0}{2^{d-3}}{a} (ty_2)^a (-1)^a \right ) 
\nonumber \\
&\equiv&   (y_2 -1) \left (  2^{d-3} (-ty_1)  - \genfrac(){0pt}{0}{2^{d-3}}{2} t^2 \right ) + (y_1 -1) \left (  2^{d-3}  (-ty_2) - \genfrac(){0pt}{0}{2^{d-3}}{2} t^2 \right ) 
\nonumber \\
&\equiv &  t \,2^{d-3} \left ( (y_2 -1)   (-y_1)  +  (y_1 -1)  (-y_2)  \right ) + 
t^2  \genfrac(){0pt}{0}{2^{d-3}}{2} \left ( - (y_2 -1) - (y_1 -1) \right )  
\nonumber \\
&\equiv &  t \,2^{d-3} \left ( -2 + t  \right ) + 
t^2  \genfrac(){0pt}{0}{2^{d-3}}{2}  (2 - t) 
\nonumber \\
&\equiv &   t \,(2-t) \left ( -2^{d-3} + t  \genfrac(){0pt}{0}{2^{d-3}}{2}   \right )  \mod t^3 A.
\nonumber 
\end{eqnarray}
This implies
\begin{equation}
\label{eq:tauprecise}
\tau\equiv t \left ( -2^{d-3} + t  \genfrac(){0pt}{0}{2^{d-3}}{2}   \right )   \quad \mathrm{mod}\quad t^3\, \mathbb{Z}_2[[t]]\,.
\end{equation}
Setting $a = 1+2^{d-4}t$ in the quadratic equation (\ref{eq:crucial})
gives the discriminant
\begin{eqnarray}
\label{eq:Delta}
\Delta(t)&=&t^2(1+2^{d-4}t)^2 - 4((1+2^{d-4}t)^2 - (1-\tau))  \\
&=&
t^2(1+2^{d-4}t)^2 - 4(2^{2d-8}t^2 +2^{d-3}t+ \tau)\nonumber
\end{eqnarray}
which is a polynomial in $\mathbb{Z}[t]$, since $p_j$, $j\ge 0$, and hence also $\tau$, are polynomials in $\mathbb{Z}[t]$.

If $d\ge 5$ then 
we obtain from (\ref{eq:tauprecise}) the congruence relation
\begin{eqnarray}
\Delta(t)&\equiv& 
t^2- 4\left( 2^{d-3}t+2^{2d-8}t^2 +t \left( -2^{d-3} + t  \genfrac(){0pt}{0}{2^{d-3}}{2} \right)\right) \nonumber\\
& \equiv& t^2 \left(1- 4\left(2^{2d-8}+2^{d-4}\left(2^{d-3} -1\right)\right)\right) \mod 4 t^3\, \mathbb{Z}_2[[t]]\,.\nonumber
\end{eqnarray}
If $d=4$ then $\tau=\sum_{j=0}^3p_j = t^3+t^2-2t$ and
\begin{eqnarray}
\Delta(t)
&=&t^2(1+2t+t^2) - 4(t^2+2t+t^3+t^2-2t) \nonumber\\
&=& t^2 ( (1-t)^2-8) \nonumber \\
&=& t^2(1-t)^2\left(1-\frac{8}{(1-t)^2}\right)
\quad \mbox{ in } \mathbb{Z}_2[[t]]\,.\nonumber
\end{eqnarray}
In other words, for all $d\ge 4$, there exists a polynomial $f(t)\in\mathbb{Z}_2[t]$ and a
power series $m(t)$ in the maximal ideal $(2,t)\,\mathbb{Z}_2[[t]]$ such that
\begin{equation}
\label{eq:Deltabigd}
\Delta(t)  =f(t)^2 ( 1+ 4\,m(t) )\,.
\end{equation}
Note that if $d\ge 5$ then $m(t)$ is actually a polynomial in $\mathbb{Z}[t]$ whose constant coefficient is divisible by 2.
Using the binomial series
\begin{equation}
\label{eq:binomial}
(1+T)^{1/2} = 1+ \sum_{j=1}^\infty \genfrac(){0pt}{0}{1/2}{j} T^j 
\end{equation}
for $T=4\, m(t)$ and noticing that $\genfrac(){0pt}{1}{1/2}{j} 4^j$ is an element in $2\,\mathbb{Z}_2$ for all $j\ge 1$, it follows that
there exists a power series $\delta(t)\in\mathbb{Z}_2[[t]]$ satisfying
$$(1+2\,\delta(t))^2 = 1+4\,m(t)\,.$$
Hence, if $d\ge 5$ then
\begin{equation}
\label{eq:arrghdbig}
b=-2^{d-5}\,t^2+t\,\delta(t)\qquad (\mbox{resp. } b=-t-2^{d-5}\,t^2-t\,\delta(t))
\end{equation}
and if  $d=4$ then
\begin{equation}
\label{eq:arrghdsmall}
b=-t^2+t(1-t)\,\delta(t)\qquad (\mbox{resp. } b=-t-t(1-t)\,\delta(t))
\end{equation}
are power series in $\mathbb{Z}_2[[t]]$ satisfying $(\ref{eq:crucial})$. 
\end{proof}

\begin{bigprop}
\label{prop:betalift}
Let $X=\langle x\rangle$, let $\Tr_X=\sum_{j=0}^{2^{d-1}-1}x^j$, and let 
$S_2=\mathbb{Z}_2X/(\mathbb{Z}_2\cdot \Tr_X)$. Let $*$ denote the involution of $S_2$ that is induced by inversion on $X$, and let
$S_2^+$ be the subring of $S_2$ that is invariant under $*$. 

The map
\begin{equation}
\label{eq:pi}
\pi:\quad\mathbb{Z}_2[[t]]\to S_2^+\qquad \mbox{given by}\quad \pi(t)=x+x^{-1}
\end{equation}
defines a surjective $\mathbb{Z}_2$-algebra homomorphism. The kernel of $\pi$ is the ideal of $\mathbb{Z}_2[[t]]$ generated
by the distinguished polynomial $\Phi(t)\in\mathbb{Z}[t]$ that is the product of the irreducible monic polynomials $q_j(t)$ in $\mathbb{Z}[t]$
of the numbers $\zeta_{2^j} + \zeta_{2^j}^{-1}$ when $\zeta_{2^j}$ is a root of unity of order $2^j$ and $j = 1,\ldots, d-1$. 

If $\tau,a\in\mathbb{Z}[t]$ and $b\in\mathbb{Z}_2[[t]]$ are as in Proposition $\ref{prop:betaliftpowerseries}$ then
$\beta=\pi(a) + x\, \pi(b)$
satisfies
\begin{equation}
\label{eq:beta}
\beta\beta^*=x^{2^{d-2}}
\end{equation}
in $S_2$.
Moreover, finding $\pi(b)$ explicitly as an element of $S_2^+$ is equivalent to
finding an explicit square root of $\pi(\Delta(t))$ inside $S_2^+\subset S_2$, where 
$\Delta(t)$ is the discriminant of $(\ref{eq:crucial})$. The latter 
is equivalent to taking
square roots of explicitly given elements inside the image of $S_2$ under the
injective $\mathbb{Z}_2$-algebra homomorphism
\begin{equation}
\label{eq:natinj}
\iota_{S_2}:\quad S_2\to \mathbb{Q}_2\otimes_{\mathbb{Z}_2} S_2 = 
\prod_{j=1}^{d-1}\mathbb{Q}_2(\zeta_{2^j})
\end{equation}
which sends $x$ in $S_2$ to the tuple $(\zeta_{2^j})_{j=1}^{d-1}$.
\end{bigprop}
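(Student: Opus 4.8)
The plan is to first determine the ring structure of $S_2$ and of its $*$-fixed subring $S_2^+$, then to analyze $\pi$, then to verify $(\ref{eq:beta})$ by a direct computation in $S_2$, and finally to reduce the construction of $b$ to extracting square roots. For the structure of $S_2$: over $\mathbb{Q}_2$ the trace element $\Tr_X$ factors into the distinct monic irreducible polynomials cutting out the totally ramified extensions $\mathbb{Q}_2(\zeta_{2^j})/\mathbb{Q}_2$, $j=1,\dots,d-1$ (these are the $2^j$-th cyclotomic factors of $x^{2^{d-1}}-1$ other than $x-1$), so the Chinese Remainder Theorem identifies $\mathbb{Q}_2\otimes_{\mathbb{Z}_2}S_2$ with $\prod_{j=1}^{d-1}\mathbb{Q}_2(\zeta_{2^j})$ via $x\mapsto(\zeta_{2^j})_j$; as $S_2$ is $\mathbb{Z}_2$-free, the induced map $\iota_{S_2}$ of $(\ref{eq:natinj})$ is injective, which is the last assertion of the proposition. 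On each factor $*$ acts by $\zeta_{2^j}\mapsto\zeta_{2^j}^{-1}$ with fixed field $\mathbb{Q}_2(\zeta_{2^j}+\zeta_{2^j}^{-1})$. At the integral level I would note that a class in $S_2$ is $*$-fixed exactly when it is represented by a \emph{symmetric} combination $\sum_i a_i x^i$ with $a_i=a_{-i}$ for all $i\in\mathbb{Z}/2^{d-1}$ (the sole relation $\Tr_X=0$ is symmetric, and antisymmetry of $i\mapsto a_i-a_{-i}$ forbids a nonzero constant shift of all coefficients), so $S_2^+$ is the $\mathbb{Z}_2$-span of $1$, of $x^{2^{d-2}}$, and of the elements $x^i+x^{-i}$ for $1\le i\le 2^{d-2}-1$.

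Next, the homomorphism $\pi$ of $(\ref{eq:pi})$. Reducing modulo $2$ gives the local ring $S_2/2S_2=\mathbb{F}_2[x]/((x+1)^{2^{d-1}-1})$ whose maximal ideal $(x+1)$ is nilpotent, and $w:=x+x^{-1}\equiv 1+1=0$ modulo $(x+1)$, so $w$ is topologically nilpotent in $S_2$; hence $\pi$ is a well-defined continuous $\mathbb{Z}_2$-algebra homomorphism, its image is the $\mathbb{Z}_2$-subalgebra $\mathbb{Z}_2[w]$, and since $w^*=w$ this image lies in $S_2^+$. The base cases and the recurrence $(\ref{eq:pj})$ give $p_j(w)=x^j+x^{-j}$ in $S_2$ for all $j\ge0$, so each $x^i+x^{-i}$ lies in $\mathbb{Z}_2[w]$; using $\Tr_X=0$ in $S_2$ we also get $x^{2^{d-2}}=-1-\sum_{i=1}^{2^{d-2}-1}(x^i+x^{-i})\in\mathbb{Z}_2[w]$. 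Combined with the spanning set found above this yields $S_2^+\subseteq\mathbb{Z}_2[w]$, hence $S_2^+=\mathbb{Z}_2[w]=\mathrm{image}(\pi)$, proving surjectivity. For the kernel: $w$ maps to a root of $q_j$ in each factor, so $\Phi(w)=0$ and $\Phi\in\ker\pi$; over $\mathbb{Q}_2$ the $q_j$ are pairwise distinct monic irreducibles ($q_1=t+2$, $q_2=t$, and $q_j$ Eisenstein for $j\ge3$, since $\zeta_{2^j}+\zeta_{2^j}^{-1}$ is a uniformizer of the totally ramified field $\mathbb{Q}_2(\zeta_{2^j})^+$), so $\Phi=\prod_jq_j$ is the minimal polynomial of $w$, is a distinguished polynomial, and has degree $\sum_j[\mathbb{Q}_2(\zeta_{2^j})^+:\mathbb{Q}_2]=2^{d-2}$. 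Thus $1,w,\dots,w^{\deg\Phi-1}$ is a $\mathbb{Z}_2$-basis of $S_2^+$, and since $\Phi$ is distinguished, $\mathbb{Z}_2[[t]]/(\Phi)=\mathbb{Z}_2[t]/(\Phi)$ by Weierstrass division; comparing bases shows the induced map $\mathbb{Z}_2[[t]]/(\Phi)\to S_2^+$ is an isomorphism, i.e. $\ker\pi$ is the ideal of $\mathbb{Z}_2[[t]]$ generated by $\Phi$.

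To verify $(\ref{eq:beta})$: since $\pi(a),\pi(b)\in S_2^+$ and $x^*=x^{-1}$, we have $\beta^*=\pi(a)+x^{-1}\pi(b)$, so
$$\beta\beta^*=\pi(a)^2+(x+x^{-1})\,\pi(a)\pi(b)+\pi(b)^2=\pi(a^2+tab+b^2)=\pi(1-\tau),$$
using that $\pi$ is a ring homomorphism, $x+x^{-1}=\pi(t)$, and $(\ref{eq:crucial})$. A direct computation of $\pi(\tau)=\sum_{j=0}^{2^{d-2}-1}(x^j+x^{-j})$, regrouping by $x^{-j}=x^{2^{d-1}-j}$ and using $\Tr_X=0$, gives $\pi(\tau)=1-x^{2^{d-2}}$ in $S_2$; hence $\beta\beta^*=\pi(1-\tau)=x^{2^{d-2}}$.

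Finally, completing the square in $(\ref{eq:crucial})$ gives $(2b+ta)^2=\Delta(t)$ in $\mathbb{Z}_2[[t]]$; applying $\pi$, the element $\delta:=2\pi(b)+\pi(ta)\in S_2^+$ is a square root of $\pi(\Delta(t))$, and conversely $\pi(b)=(\delta-\pi(ta))/2$ is recovered from $\delta$ ($S_2$ being $2$-torsion free), the two solutions $b$ and $-ta-b$ of $(\ref{eq:crucial})$ corresponding to $\pm\delta$. So making $\pi(b)$ explicit is the same as producing an explicit square root of $\pi(\Delta(t))$ in $S_2^+$; pushing through $\iota_{S_2}$ (with $\iota_{S_2}(w)=(\zeta_{2^j}+\zeta_{2^j}^{-1})_{j=1}^{d-1}$) turns this into extracting, for each $j=1,\dots,d-1$, a square root of the explicitly given element $\Delta(\zeta_{2^j}+\zeta_{2^j}^{-1})\in\mathbb{Q}_2(\zeta_{2^j})$ — equivalently, by $(\ref{eq:Deltabigd})$, of $1+4\,m(\zeta_{2^j}+\zeta_{2^j}^{-1})$, which is $\equiv1\bmod4$ and hence a square in $\mathbb{Q}_2(\zeta_{2^j})$ — together with the check that the resulting tuple lies in $\iota_{S_2}(S_2^+)$, which it does for the signs coming from the solution $b$ of Proposition $\ref{prop:betaliftpowerseries}$. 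I expect the main obstacle to be the \emph{exact} equality $S_2^+=\mathbb{Z}_2[w]$: a dimension count only gives that $\mathbb{Z}_2[w]$ has finite index in $S_2^+$, and one really needs the explicit symmetric-monomial description of $S_2^+$ together with the relation $\Tr_X=0$ in order to exhibit $x^{2^{d-2}}$ as an integral polynomial in $w$.
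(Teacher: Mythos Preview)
Your proof is correct and follows essentially the same route as the paper's: identify a spanning set for $S_2^+$, use the recursion $p_j(w)=x^j+x^{-j}$ to get surjectivity of $\pi$, determine $\ker\pi=(\Phi)$ by a rank count once $\Phi$ is seen to be distinguished, compute $\beta\beta^*=\pi(b^2+tab+a^2)=\pi(1-\tau)=x^{2^{d-2}}$, and reduce the explicit determination of $\pi(b)$ to a square root of $\pi(\Delta(t))$ read off componentwise via $\iota_{S_2}$. The only cosmetic differences are that the paper obtains the spanning set for $S_2^+$ via the vanishing of $\HH^1(\langle *\rangle,\mathbb{Z}_2\cdot\Tr_X)$ rather than your direct symmetric-coefficient argument, and it extends $\pi$ from $\mathbb{Z}_2[t]$ to $\mathbb{Z}_2[[t]]$ by first exhibiting $\Phi$ as distinguished rather than by your topological-nilpotence check on $w$.
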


\begin{proof}
Let $J=\langle *\rangle$ be the group of order two generated by the involution $*$. Then $J$ acts on $\mathbb{Z}_2X$ and we have
an exact sequence of $J$-modules
$$0 \to \mathbb{Z}_2 \cdot \Tr_X \to \mathbb{Z}_2X \to S_2 \to 0$$
in which $J$ acts trivially on $\mathbb{Z}_2 \cdot \Tr_X$.  Taking $J$-cohomology and using that $\HH^1(J, \mathbb{Z}_2 \cdot \Tr_X) = 0$ since $J$ acts trivially on 
$\mathbb{Z}_2\cdot \Tr_X \cong \mathbb{Z}_2$ and $\mathbb{Z}_2 $ is torsion free, we obtain that $(\mathbb{Z}_2X)^J \to S_2^J = S_2^+$ is surjective.
Note that $(\mathbb{Z}_2X)^J$ is a free $\mathbb{Z}_2$-module generated by 
$$\{x^j + x^{-j}\}_{j = 1}^{2^{d-2} -1} \cup \{1, x^{2^{d-2}}\}.$$  
Let $\pi:\mathbb{Z}_2[t]\to S_2^+$ be the $\mathbb{Z}_2$-algebra homomorphism given by $\pi(t)=x+x^{-1}$.
The recurrence relation (\ref{eq:pj}) gives immediately that
$\pi(p_j)=x^j+x^{-j}$ for all $j\ge 0$. Moreover, the definition of $\tau$ in Proposition \ref{prop:betaliftpowerseries}
shows that 
\begin{equation}
\label{eq:moretau}
\pi(1-\tau) = x^{2^{d-2}}
\end{equation} 
in $S_2^+$. Define $\Phi(t)\in\mathbb{Z}[t]$ to be the product of the irreducible monic polynomials $q_j(t)$ over $\mathbb{Z}$
of the numbers $\zeta_{2^j} + \zeta_{2^j}^{-1}$ when $\zeta_{2^j}$ is a root of unity of order $2^j$ and $j = 1,\ldots, d-1$. 
Since $S_2$ can be identified with the image of the injective $\mathbb{Z}_2$-algebra
homomorphism $\iota_{S_2}$ from (\ref{eq:natinj}), it follows that $\pi(\Phi(t))=0$.
Using that $q_1(t) = t + 2$, $q_2(t) = t$ and $q_{j+1}(t) = q_j(t^2 - 2)$ if $j \ge 2$, we see that $\Phi(t)$ is a distinguished polynomial
of degree $2^{d-2}$. This implies that $\pi$ induces a well-defined surjective $\mathbb{Z}_2$-algebra homomorphism
$\pi:\mathbb{Z}_2[[t]] \to S_2^+$ such that the kernel of $\pi$ contains the ideal of $\mathbb{Z}_2[[t]]$ generated by $\Phi(t)$.
Since $\mathbb{Z}_2[[t]]/(\Phi(t))$ is a free $\mathbb{Z}_2$-module of rank $2^{d-2}$ and the rank of $\mathbb{Q}_2\otimes_{\mathbb{Z}_2}S_2^+$ 
is also $2^{d-2}$, it follows that $\Phi(t)$ generates the kernel of $\pi$.

Letting $a = 1+2^{d-4}\,t$, it follows from  Proposition \ref{prop:betaliftpowerseries} that the equation
$$b^2 + tab + a^2 = 1-\tau$$
has a solution $b$ in $\mathbb{Z}_2[[t]]$. Applying the homomorphism $\pi$ from (\ref{eq:pi}) to $a$ and $b$ we obtain an 
element
$$\beta=\pi(a)+x\,\pi(b)$$
in $S_2$. Since $\beta^*=\pi(a)+x^{-1}\pi(b)$, we have
\begin{eqnarray*}
\beta\beta^* &=& \pi(a)^2 + \pi(a)\pi(b)(x + x^{-1}) + \pi(b)^2\\
&=&\pi(b^2 + tab + a^2)\,.
\end{eqnarray*}
Using (\ref{eq:moretau}), it follows that $\beta$ is an element of $S_2$ satisfying the relation (\ref{eq:beta}).

For the last statement of Proposition \ref{prop:betalift}, we use the injective 
$\mathbb{Z}_2$-algebra homomorphism $\iota_{S_2}$ 
from (\ref{eq:natinj}). By Proposition \ref{prop:betaliftpowerseries}, 
$\Delta(t) = t^2 ( (1-t)^2-8)$ if $d=4$, and if $d\ge 5$ then
$\Delta(t)=t^2(1+4 \,m(t) )$ for some polynomial $m(t)\in\mathbb{Z}[t]$ whose constant coefficient is divisible by $2$.

If $d=4$ then the inverse of $\pi(1-t)=1-x-x^{-1}$ in
$S_2$ is given by
$$u_1=\frac{1}{3}\,(-1-2x-x^2+x^3+2x^4+x^5-x^6-2x^7)\,.$$
This means that
$$\pi(\Delta(t)) = (x+x^{-1})^2\,(1-x-x^{-1})^2\left(1-8\,u_1^2\right) \quad
\mbox{ for $d=4$}.$$
Therefore, for all $d\ge 4$, 
$$\pi(\Delta(t)) = s^2\,(1+4r)$$
for explicitly given elements $r,s\in S_2$, where $r$ is in the Jacobson radical of $S_2$.
Let $(1 + 4r)_j$ be the image of $1 + 4r$ in $\mathbb{Q}_2(\zeta_{2^j})$ under the
injection $\iota_{S_2}$ in (\ref{eq:natinj}).
Then we can take $\sqrt{1 + 4r} \in S_2$ to be the unique element whose component in 
$\mathbb{Q}_2(\zeta_{2^j})$ is the square root of $(1 + 4r)_j$ which is congruent to 
$1$ modulo $2 \,\mathfrak{m}_j$ when $\mathfrak{m}_j = \mathbb{Z}_2[\zeta_{2^j}](1 - \zeta_{2^j})$ is the maximal ideal of the ring 
of integers $\mathbb{Z}_2[\zeta_{2^j}]$.
\end{proof}

\begin{bigprop}
\label{prop:unidefQ}
Let $R=W\,\overline{\Q}$. Let  $\beta\in S_2$ be as in Proposition $\ref{prop:betalift}$,
let $\gamma$ be the reduction of $\beta$ modulo $2$, and let $L=kX/(k\cdot \Tr_X)$ be the corresponding
$k\Q$-module as in Remark $\ref{rem:endotrivQ}(a)$. Then $L$ is an endo-trivial $k\Q$-module as in Remark $\ref{rem:endotrivQ}(b)$.

If $V_0=k$ is the trivial simple $k\Q$-module, let $V_{0,W}=W$ with trivial $\Q$-action.
If $V_1=L$, let $V_{1,W}=WX/(W\cdot \Tr_X)$ be the $W\Q$-module on
which $y$ acts as
\begin{equation}
\label{eq:yaction}
y\cdot v_1 = \beta \,v_1^*\qquad\mbox{for all $v_1\in V_{1,W}$.}
\end{equation}
Let $i\in\{0,1,2,3\}$ and let $j\in\{0,1\}$. Then 
$\Omega^i_{W\Q}(V_{j,W})$, as defined in Remark 
$\ref{rem:syzygylift}$, is a lift of $\Omega_{\Q}^i(V_j)$ over $W$. Moreover, the universal deformation of $\Omega_{\Q}^i(V_j)$ is given by the isomorphism class of 
the $R\Q$-module $U(\Q,\Omega_{\Q}^i(V_j))=\Omega^i_{W\Q}(V_{j,W})\otimes_W R$ on which $x$ $($resp. $y$$)$ acts diagonally as multiplication by 
$x\otimes\overline{x}$ $($resp. $y\otimes\overline{y}$$)$.
\end{bigprop}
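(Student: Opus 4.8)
The plan is to obtain this proposition by combining Propositions~\ref{prop:betaliftpowerseries} and \ref{prop:betalift}, which produce the element $\beta\in S_2$, with Remark~\ref{rem:syzygylift}, which manufactures explicit lifts of syzygies, and with Theorem~\ref{thm:supermain}, which identifies the universal deformation ring and the universal deformation. No new hard computation is needed; essentially the only thing to check by hand is that the prescribed actions of $x$ and $y$ on $WX/(W\cdot\Tr_X)$ really define a $W\Q$-module.

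First I would settle the opening assertion about $L$. Reducing the identity $\beta\beta^*=x^{2^{d-2}}$ from $(\ref{eq:beta})$ modulo $2$ gives $\gamma\gamma^*\equiv x^{2^{d-2}}$ in $\mathbb{F}_2X/(\mathbb{F}_2\cdot\Tr_X)$, i.e.\ exactly $(\ref{eq:betamod2})$. By Remark~\ref{rem:endotrivQ}(a), $\gamma$ therefore defines a $k\Q$-module structure on $\overline{S}=kX/(k\cdot\Tr_X)$, and the resulting module $L$ is endo-trivial of $k$-dimension $2^{d-1}-1$ with $\mathrm{Res}^{\Q}_{\langle x\rangle}L\cong\overline{S}\cong\Omega^1_{\langle x\rangle}(k)$; this is precisely the hypothesis on $L$ in Remark~\ref{rem:endotrivQ}(b).

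Next I would verify that $V_{1,W}=WX/(W\cdot\Tr_X)$, with $x$ acting by multiplication and $y$ by $v\mapsto\beta v^*$, is a lift of $L$ over $W$. This ring is commutative and $W$-free of rank $2^{d-1}-1$, and since $\Tr_X$ is fixed both by $*$ and by multiplication by $x$, the two operators are well-defined $W$-linear maps. It then suffices to check the defining relations $(\ref{eq:quat})$ of $\Q$. The relation $x^{2^{d-1}}=1$ is automatic; using that $*$ is a ring automorphism with $(v^*)^*=v$, one gets $y^2\cdot v=\beta(\beta v^*)^*=\beta\beta^*v=x^{2^{d-2}}v$ by $(\ref{eq:beta})$, which is the relation $x^{2^{d-2}}=y^2$; and $y\cdot(x\cdot v)=\beta(xv)^*=\beta x^{-1}v^*=x^{-1}\beta v^*=x^{-1}\cdot(y\cdot v)$ by commutativity, which is $yxy^{-1}=x^{-1}$. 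Hence $V_{1,W}$ is a $W\Q$-module, free over $W$; and since $\mathbb{Z}_2\subseteq W$ and $2\in m_W$, reduction modulo $m_W$ carries $\beta$ to $\gamma$, so $V_{1,W}/m_WV_{1,W}\cong L$ as $k\Q$-modules. Thus $V_{1,W}$ is a lift of $V_1=L$, while $V_{0,W}=W$ with trivial action is visibly a lift of $V_0=k$.

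Finally I would read off the deformations. Since $V_{0,W}$ and $V_{1,W}$ are explicit lifts over $W$, Remark~\ref{rem:syzygylift} supplies, for each $i\ge 0$, an explicit lift $\Omega^i_{W\Q}(V_{j,W})$ of $\Omega^i_{\Q}(V_j)$ over $W$. Each $\Omega^i_{\Q}(V_j)$ is endo-trivial, as $[V_j]\in T(\Q)$ and $T(\Q)$ is stable under the syzygy operator, so in particular $\underline{\mathrm{End}}_{k\Q}(\Omega^i_{\Q}(V_j))\cong k$. Theorem~\ref{thm:supermain}(i) gives $R(\Q,\Omega^i_{\Q}(V_j))\cong W\Q^{\mathrm{ab},2}=W\overline{\Q}=R$, and Theorem~\ref{thm:supermain}(ii), applied with the lift $\Omega^i_{W\Q}(V_{j,W})$, shows that the universal deformation of $\Omega^i_{\Q}(V_j)$ over $R$ is the isomorphism class of $\Omega^i_{W\Q}(V_{j,W})\otimes_W R$ with the diagonal $\Q$-action in which $x$ acts as $x\otimes\overline{x}$ and $y$ as $y\otimes\overline{y}$. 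Restricting to $i\in\{0,1,2,3\}$ loses nothing, since by the last sentence of Remark~\ref{rem:endotrivQ}(b) the classes $[\Omega^i_{\Q}(V_j)]$, $i\in\{0,1,2,3\}$, $j\in\{0,1\}$, exhaust $T(\Q)$, hence these modules represent all indecomposable endo-trivial $k\Q$-modules up to isomorphism. I expect the only delicate point to be the verification of the relation $y^2=z=x^{2^{d-2}}$ for the $y$-action; this is exactly the property that Proposition~\ref{prop:betalift} was engineered to provide, and the remaining verifications are formal manipulations with the involution $*$ on the commutative ring $WX/(W\cdot\Tr_X)$ together with the already-established Theorem~\ref{thm:supermain} and Remark~\ref{rem:syzygylift}.
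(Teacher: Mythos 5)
Your proposal is correct and follows essentially the same route as the paper: reduce $\beta\beta^*=x^{2^{d-2}}$ modulo $2$ to land in the setting of Remark \ref{rem:endotrivQ}(a)/(b), verify that $x$ and $y\colon v\mapsto\beta v^*$ satisfy the relations of $\Q$ on $WX/(W\cdot\Tr_X)$, and then invoke Remark \ref{rem:syzygylift} and Theorem \ref{thm:supermain} for the syzygies and the universal deformations. The only stylistic difference is that you carry out the module-structure verification directly (using that $*$ is a $W$-algebra involution of the commutative ring $WX/(W\cdot\Tr_X)$), whereas the paper defers this computation to the proof of \cite[Lemma 6.4]{CT}.
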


\begin{proof}
The first statement follows from Remark \ref{rem:endotrivQ} and Proposition \ref{prop:betalift}.
To check that  (\ref{eq:yaction}) defines a $W\Q$-module structure on $V_{1,W}$, we follow the corresponding argument in 
the proof of \cite[Lemma 6.4]{CT}. 
The remaining statements of Proposition \ref{prop:unidefQ} now follow  from Theorem \ref{thm:supermain} and Remark \ref{rem:syzygylift}.
\end{proof}


\section{Location of the endo-trivial modules in the stable Auslander-Reiten quiver of $k\D$}
\label{s:appendix}
\setcounter{equation}{0}
Let $k$ be a field of characteristic $2$, and let $\D$ be either $\SD$ or $\Q$. 
In this section, we determine the location of the isomorphism classes of all indecomposable endo-trivial 
$k\D$-modules in the stable Auslander-Reiten quiver of $k\D$. To do so, we first use a slight variation
of results in \cite{bondarenkodrozd} and \cite{Dade1} to obtain an explicit isomorphism
between $k\D$ and a $k$-algebra $\Lambda_{\D}$ of the form $k\langle a,b\rangle/I_{\D}$ such that the
socle scalars match the descriptions in \cite[Thm. III.1 and Sect. III.13]{erd}. 
Then we identify the $\Lambda_{\D}$-modules that correspond to indecomposable endo-trivial $k\D$-modules.
In particular, this provides a different approach to describing the indecomposable endo-trivial
$k\D$-modules, using quivers and relations.

\subsection{Location of the endo-trivial modules in the stable Auslander-Reiten quiver of $k\SD$}
\label{ss:SDchar2}

Let $d\ge 4$ be a fixed integer, and let $\SD$ be the semidihedral group of order $2^d$
given by (\ref{eq:semid}). In \cite[Sect. 3]{bondarenkodrozd}, Bondarenko and Drozd stated an explicit isomorphism
between $k\SD/\mathrm{soc}(k\SD)$ and a certain quotient algebra of $k\langle a, b\rangle$.
The next lemma is a slight variation of their result to obtain an explicit isomorphism for $k\SD$ itself. 
In particular, we obtain precise socle scalars. We provide a proof for the convenience of the reader.

\begin{lemma}
\label{lem:kSD}
Let $\Lambda_{\SD} = k\langle a,b\rangle /I_{\SD}$, where
\begin{equation}
\label{eq:ideal}
I_{\SD}=\left((ab)^{2^{d-2}} - (ba)^{2^{d-2}} , a^2 - b(ab)^{2^{d-2}-1}-(ab)^{2^{d-2}} , b^2, (ab)^{2^{d-2}} a\right).
\end{equation}
Define the following elements in the radical of $k\SD$:
\begin{eqnarray}
\label{eq:a}
r_a&=& (z+yx) + (x+x^{-1}) + \sum_{i=1}^{2^{d-4}-1} \left(x^{4i+1}+x^{-(4i+1)}\right) (1+zy)\,,\\
\label{eq:b}
r_b&=&1+y\,.
\end{eqnarray}
Then the map
\begin{equation}
\label{eq:f}
f_{\SD}:\Lambda_{\SD}\to k\SD\qquad\mbox{defined by $f_{\SD}(a)=r_a$, $f_{\SD}(b)=r_b$}
\end{equation}
induces a $k$-algebra isomorphism between $\Lambda_{\SD}$ and $k\SD$.
\end{lemma}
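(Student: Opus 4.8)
The plan is to verify that $f_{\SD}$ is a well-defined $k$-algebra homomorphism and then that it is an isomorphism, by a dimension count together with surjectivity. First I would check that the images $r_a, r_b$ satisfy the four defining relations generating $I_{\SD}$; this is the technical heart of the argument. Since $r_b = 1+y$ and $y^2 = 1$, we get $r_b^2 = 1 + 2y + y^2 = 0$ in characteristic $2$ (using $y^2=1$), which handles the relation $b^2 = 0$. The remaining three relations — namely $(ab)^{2^{d-2}} = (ba)^{2^{d-2}}$, $a^2 = b(ab)^{2^{d-2}-1} + (ab)^{2^{d-2}}$, and $(ab)^{2^{d-2}} a = 0$ — require computing powers of $r_a r_b$ and $r_b r_a$ inside $k\SD$. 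The natural way to organize this is to express $r_a r_b$ and $r_b r_a$ in the group basis and identify a pattern: one expects $(ab)^{2^{d-2}}$ and $(ba)^{2^{d-2}}$ to both land on a scalar multiple of the socle element $\Tr_{\SD} = \sum_{g\in\SD} g$ (or a closely related central element), so that the socle scalar can be read off explicitly, matching \cite[Thm. III.1]{erd}.

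Concretely, I would set $X = \langle x\rangle$ and decompose $k\SD = kX \oplus kXy$ as a $kX$-bimodule; then $r_a$ lies in the "symmetric" part and $r_b = 1+y$ interchanges the two pieces up to the twist $yxy^{-1} = x^{2^{d-2}-1}$. Writing $r_a = \xi + \eta(1+zy)$ where $\xi = (z+yx) + (x+x^{-1})$ and $\eta = \sum_{i=1}^{2^{d-4}-1}(x^{4i+1}+x^{-(4i+1)})$, one computes $r_a r_b$ and then its powers inductively. The key algebraic fact to extract is that the "word length" filtration on $\Lambda_{\SD}$ matches the radical filtration on $k\SD$, so that $f_{\SD}$ respects the associated graded structure; once I know $r_a, r_b \in \mathrm{rad}(k\SD)$ and that their images generate $\mathrm{rad}(k\SD)/\mathrm{rad}^2(k\SD)$ (a $2$-dimensional space, since $k\SD$ is local with a $2$-generated radical), Nakayama's lemma gives surjectivity of $f_{\SD}$.

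For injectivity, I would compare dimensions: $\dim_k k\SD = 2^d$, so it suffices to show $\dim_k \Lambda_{\SD} \le 2^d$. This is a purely combinatorial claim about the monoid of words in $a, b$ modulo $I_{\SD}$: using the relations $b^2 = 0$, $a^2 = b(ab)^{2^{d-2}-1} + (ab)^{2^{d-2}}$, and $(ab)^{2^{d-2}}a = 0$ (plus the consequence $(ba)^{2^{d-2}}b = b(ab)^{2^{d-2}} = 0$ and $(ab)^{2^{d-2}} = (ba)^{2^{d-2}}$ being central), one reduces every word to an alternating word in $a$ and $b$, and these alternating words $1, a, b, ab, ba, aba, \dots$ up to length $2^{d-1}$ together with the top socle element span $\Lambda_{\SD}$, giving the bound $2^d$. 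Combining surjectivity with the dimension inequality forces equality of dimensions and hence that $f_{\SD}$ is an isomorphism.

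The main obstacle I anticipate is the explicit power computation showing $(ab)^{2^{d-2}}$ maps to the correct socle scalar: the expression for $r_a$ was manifestly reverse-engineered (the coefficients $x^{4i+1}+x^{-(4i+1)}$ and the factor $1+zy$ are delicate), so I would need to carry out the induction on powers of $r_a r_b$ carefully, tracking cancellations in characteristic $2$ and using $z = x^{2^{d-2}}$ central with $z^2 = 1$. It is plausible that the cleanest route is to first establish the isomorphism $k\SD/\mathrm{soc}(k\SD) \cong \Lambda_{\SD}/\mathrm{soc}$ via \cite{bondarenkodrozd}, and then only check the one extra socle relation by hand — this is the "slight variation" the lemma advertises — so the real work is confined to computing the single socle scalar and confirming it matches \cite[Thm. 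III.1]{erd}. I would also double-check that $d \ge 4$ is genuinely needed (for $d=4$ the sum defining $r_a$ has range $1\le i \le 2^0 - 1$, i.e. is empty, so $r_a$ degenerates and one should verify the relations still hold in that boundary case).
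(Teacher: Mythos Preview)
Your plan matches the paper's proof: verify that $r_a,r_b$ satisfy the four relations of $I_{\SD}$, deduce surjectivity from the fact that their images generate $\mathrm{rad}(k\SD)/\mathrm{rad}^2(k\SD)$, and conclude by a dimension comparison (the paper simply asserts $\dim_k\Lambda_{\SD}=2^d$ rather than bounding it by your word-reduction argument, and it also invokes \cite{bondarenkodrozd} exactly as you suggest in your final paragraph). For the power computation you flag as the main obstacle, the paper's device is to first compute $r_a^2=\sum_{i=0}^{2^{d-2}-1}x^{2i}(1+y)$ directly, then use $\mathrm{rad}^{2^{d-1}+1}(k\SD)=0$ to replace $r_a$ by $(1+yx)+(x+x^{-1})$ modulo $\mathrm{rad}^3$ and establish the closed-form identity $(1+y)\bigl[(1+yx+x+x^{-1})(1+y)\bigr]^i=(x^{-1}+xz)^i(1+y)$ by induction, from which $(r_ar_b)^{2^{d-2}}=(r_br_a)^{2^{d-2}}=\sum_{i=0}^{2^{d-1}-1}x^i(1+y)$ and the remaining relations drop out.
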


\begin{proof}
Define 
$$u=z+\sum_{i=0}^{2^{d-2}-1}x^{2i}.$$
The elements $r_a$ and $(u\,r_b)$ are the same elements that were given in \cite[Sect. 3]{bondarenkodrozd} to provide an explicit isomorphism between
\begin{equation}
\label{eq:Lambdasoc}
\overline{\Lambda}_{\SD}=\Lambda_{\SD}/\mathrm{soc}(\Lambda_{\SD}) \cong  k\langle a,b\rangle / 
\left(a^2 - b(ab)^{2^{d-2}-1}, a^3, b^2\right)
\end{equation}
and $k\SD/\mathrm{soc}(k\SD)$. 
Note that $u$ lies in the center of $k\SD$ and $u^2=1$. Since in all the relations for $\Lambda_{\SD}$ 
(and $\overline{\Lambda}_{\SD}$), $b$
occurs an even number of times, the factor $u$ does not play a role. 
Since
$$(z+yx) + (x+x^{-1}) 
=(1+yx) + x^{-1}(1+x)^2+ (1+x)^{2^{d-2}}$$
and since, for $i\ge 1$,
$$\left(x^{4i+1}+x^{-(4i+1)}\right) (1+zy) = x^{-(4i+1)}\left(1+x^{4i+1}\right)^2(1+zy)$$
lies in $\mathrm{rad}^3(k\SD)$, we obtain
\begin{eqnarray}
\label{eq:arel1}
r_a&\equiv& (1+yx) + (x+x^{-1}) \mod \mathrm{rad}^3(k\SD)\\
&\equiv& 1+yx \mod \mathrm{rad}^2(k\SD)\nonumber
\end{eqnarray}
A straightforward  calculation shows that
$$r_a^2 = \sum_{i=0}^{2^{d-2}-1} x^{2i}(1+y).$$
Using (\ref{eq:arel1}) and 
$\mathrm{rad}^{2^{d-1}+1}(k\SD)=0$,
it follows that
\begin{equation}
\label{eq:abrel1}
r_b(r_ar_b)^{2^{d-2}-1} = (1+y)\left[(1+yx + x+x^{-1})(1+y)\right]^{2^{d-2}-1}.
\end{equation}
We have
$$(1+y)(1+yx + x+x^{-1})(1+y)=(x^{-1}+xz)(1+y),$$
and hence, inductively, for all $i\ge 1$,
\begin{equation}
\label{eq:abrel2}
(1+y)\left[(1+yx + x+x^{-1})(1+y)\right]^i=(x^{-1}+xz)^i(1+y).
\end{equation}
Notice that for all $0\le i\le 2^{d-2}-1$, the binomial coefficient ${2^{d-2}-1}\choose i$ is an odd integer. Hence,
$$(x^{-1}+xz)^{2^{d-2}-1} =xz\,(1+x^2z)^{2^{d-2}-1}
=\sum_{i=0}^{2^{d-2}-1}x^{2i+1}.$$
Therefore, we obtain
$$r_b(r_ar_b)^{2^{d-2}-1} = \sum_{i=0}^{2^{d-2}-1}x^{2i+1}(1+y).$$
A straightforward calculation now shows that 
$$(r_ar_b)^{2^{d-2}} =\sum_{i=0}^{2^{d-1}-1}x^i(1+y)= (r_br_a)^{2^{d-2}},$$
and hence $r_a^2=r_b(r_ar_b)^{2^{d-2}-1} +(r_ar_b)^{2^{d-2}}$.
Since $r_b^2=0=(r_ar_b)^{2^{d-2}}r_a$, the map $f_{\SD}$ from (\ref{eq:f}) defines a $k$-algebra homomorphism.
Since $\Lambda_{\SD}$ and $k\SD$ are Artinian local $k$-algebras with residue field $k$ and since the images of 
$r_a$ and $r_b$ generate $\mathrm{rad}(k\SD)/\mathrm{rad}^2(k\SD)$ by (\ref{eq:b}) and (\ref{eq:arel1}), 
we see that $f_{\SD}$ is surjective. Since the $k$-dimensions of $\Lambda_{\SD}$ and $k\SD$ are both 
$2^d$, it follows that $f_{\SD}$ is a $k$-algebra isomorphism.
\end{proof}

The following lemma realizes the $k\SD$-modules $Y$ and $L$ from Remark \ref{rem:endotrivSD}
as $\Lambda_{\SD}$-modules. Since all indecomposable endo-trivial $k\SD$-modules are in the
$\Omega$-orbit of either $L$ or of the trivial simple $k\SD$-module $k$, this provides descriptions
of all $\Lambda_{\SD}$-modules corresponding to indecomposable endo-trivial $k\SD$-modules.

\begin{lemma}
\label{lem:YLSD}
Write $\Lambda=\Lambda_{\SD}$, and let $f_{\SD}:\Lambda\to k\SD$ 
be the $k$-algebra isomorphism from Lemma $\ref{lem:kSD}$. 
Define the following $\Lambda$-modules:
\begin{equation}
\label{eq:LaSD}
Y_\Lambda=\Lambda b \quad\mbox{and}\quad L_a=\Lambda ab.
\end{equation}
Then $Y_\Lambda\cong \Lambda/\Lambda b$ and $L_a\cong \Lambda a/\Lambda a^2\cong\Lambda/\Lambda a$. Moreover,
$Y_\Lambda$ and $L_a$ are uniserial $\Lambda$-modules of length $2^{d-1}$ and $2^{d-1}-1$, respectively.
If $Y$ and $L$ are the $k\SD$-modules from $(\ref{eq:YL})$, then
$Y$ corresponds under $f_{\SD}$ to $Y_\Lambda$ and $L$ corresponds to $L_a$.
\end{lemma}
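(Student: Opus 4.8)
The plan is to transport everything through the $k$-algebra isomorphism $f_{\SD}:\Lambda\to k\SD$ of Lemma~\ref{lem:kSD}, and to control the left ideals $\Lambda b$, $\Lambda ab$, $\Lambda a$ and $\Lambda a^2$ of $\Lambda$ by means of a monomial basis. Since $f_{\SD}(b)=r_b=1+y=\sum_{h\in\langle y\rangle}h$, we get $f_{\SD}(\Lambda b)=k\SD\cdot\bigl(\sum_{h\in\langle y\rangle}h\bigr)$, and the standard isomorphism $kG\cdot\bigl(\sum_{h\in H}h\bigr)\cong\mathrm{Ind}_H^G k$ of left $kG$-modules (here $g\mapsto g\langle y\rangle$) shows that this left ideal is isomorphic to the permutation module $\mathrm{Ind}_{\langle y\rangle}^{\SD}k=Y$ of $(\ref{eq:YL})$; thus $Y$ corresponds to $Y_\Lambda=\Lambda b$ under $f_{\SD}$. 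Since $\mathrm{char}\,k=2$ we also have $k\SD(1+y)=k\SD(1-y)$, so $\Lambda/\Lambda b\cong k\SD/k\SD(1-y)=k\SD\otimes_{k\langle y\rangle}k\cong Y$ as well, which gives $Y_\Lambda\cong\Lambda/\Lambda b$. Moreover $\Lambda$ is local with $\mathrm{rad}(\Lambda)=\Lambda a+\Lambda b$, so using $b^2=0$ we obtain $\mathrm{rad}_\Lambda(Y_\Lambda)=\mathrm{rad}(\Lambda)\,b=\Lambda ab+\Lambda b^2=\Lambda ab=L_a$; hence $L_a=\mathrm{rad}(Y_\Lambda)$ corresponds under $f_{\SD}$ to $\mathrm{rad}(Y)=L$, as claimed.

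Next I would dispose of the two remaining presentations of $L_a$. From the relation $a^2=b(ab)^{2^{d-2}-1}+(ab)^{2^{d-2}}$ and $b^2=0$ one checks that $a^2b=0$. Consequently right multiplication by $b$ defines surjective $\Lambda$-module homomorphisms $\Lambda\to\Lambda ab$, $\lambda\mapsto\lambda ab$, and $\Lambda a\to\Lambda ab$, $\mu\mapsto\mu b$, which vanish on $\Lambda a$ and $\Lambda a^2$ respectively and therefore induce surjections $\Lambda/\Lambda a\to\Lambda ab$ and $\Lambda a/\Lambda a^2\to\Lambda ab$. It then suffices to verify the dimension equalities $\dim_k(\Lambda/\Lambda a)=\dim_k(\Lambda a/\Lambda a^2)=\dim_k\Lambda ab$, for then both of these surjections are isomorphisms and $L_a\cong\Lambda a/\Lambda a^2\cong\Lambda/\Lambda a$.

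The dimension bookkeeping rests on a monomial basis of $\Lambda$. The relations $b^2=0$ and $a^2=b(ab)^{2^{d-2}-1}+(ab)^{2^{d-2}}$ let one rewrite every word in $a,b$ as a $k$-linear combination of \emph{alternating} words (containing neither $aa$ nor $bb$), and $(ab)^{2^{d-2}}a=0$ together with $(ab)^{2^{d-2}}=(ba)^{2^{d-2}}$ reduce these to the finite list consisting of $1$, of the words $(ab)^i$ and $(ba)^j$ for $1\le i\le 2^{d-2}$ and $1\le j\le 2^{d-2}-1$, of the words $(ab)^i a$ for $0\le i\le 2^{d-2}-1$, and of the words $b(ab)^i$ for $0\le i\le 2^{d-2}-1$; there are exactly $2^d$ of these, so since $\dim_k\Lambda=\dim_k k\SD=2^d$ by Lemma~\ref{lem:kSD} they form a $k$-basis of $\Lambda$ (with $(ab)^{2^{d-2}}=(ba)^{2^{d-2}}$ spanning $\mathrm{soc}(\Lambda)$). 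Reading off which of these basis words lie in $\Lambda b$, $\Lambda ab$, $\Lambda a$ and $\Lambda a^2$ then gives $\dim_k\Lambda b=2^{d-1}$, $\dim_k\Lambda ab=2^{d-1}-1$, $\dim_k\Lambda a=2^{d-1}+1$ and $\dim_k\Lambda a^2=2$, completing the previous step and confirming that $L$ (which has $k$-dimension $2^{d-1}-1$) corresponds to $L_a$. Finally, for each length $\ell$ there is exactly one alternating word ending in $b$, namely $(ab)^{\ell/2}$ if $\ell$ is even and $b(ab)^{(\ell-1)/2}$ if $\ell$ is odd, and it is nonzero precisely for $1\le\ell\le 2^{d-1}$; hence $\mathrm{rad}^i(Y_\Lambda)$ is the span of the words ending in $b$ of length at least $i+1$, so every radical layer of $Y_\Lambda=\Lambda b$ is one-dimensional, $Y_\Lambda$ is uniserial of length $2^{d-1}$, and thus $L_a=\mathrm{rad}(Y_\Lambda)$ is uniserial of length $2^{d-1}-1$.

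The only genuine work is this monomial bookkeeping, and I expect its delicate point to be the computation of $\Lambda a$: because $a^2=b(ab)^{2^{d-2}-1}+(ab)^{2^{d-2}}$ lies in $\Lambda a$, the left ideal $\Lambda a$ also picks up the word $b(ab)^{2^{d-2}-1}$ from the ``$b$-branch'' of the basis, and it is precisely this that makes $\dim_k\Lambda a=2^{d-1}+1$ rather than $2^{d-1}$, hence $\dim_k(\Lambda/\Lambda a)=2^{d-1}-1=\dim_k L$. Equivalently, this amounts to the fact that $\Lambda a\cap\Lambda b$ is two-dimensional, spanned by $(ab)^{2^{d-2}}$ and $b(ab)^{2^{d-2}-1}$; everything else is a routine check against the relations $(\ref{eq:ideal})$.
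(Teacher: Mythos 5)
Your proof is correct, and the overall strategy is the same as the paper's: transport everything through $f_{\SD}$ using $f_{\SD}(b)=1+y$, identify $Y$ with $Y_\Lambda$ and hence $L=\mathrm{rad}(Y)$ with $L_a=\mathrm{rad}(Y_\Lambda)$, and then establish the auxiliary isomorphisms of left ideals inside $\Lambda$. Where you differ is in how the $\Lambda$-internal statements are verified. The paper argues structurally and tersely: $\Lambda/\Lambda b\cong\Lambda b$ and $\Lambda/\Lambda a\cong\Lambda ab$ follow from the obvious surjections ``by comparing $k$-dimensions'' (the needed dimensions being implicitly available from $\dim_kY=2^{d-1}$), and $L_a\cong\Lambda a/\Lambda a^2$ comes from the isomorphism $\mathrm{rad}(\Lambda/\Lambda b)=(\Lambda a+\Lambda b)/\Lambda b\cong\Lambda a/(\Lambda a\cap\Lambda b)$ together with the equality $\Lambda a\cap\Lambda b=\Lambda a^2$. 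You instead construct an explicit monomial basis of $\Lambda$ by alternating words and read off all the dimensions $\dim_k\Lambda b=2^{d-1}$, $\dim_k\Lambda ab=2^{d-1}-1$, $\dim_k\Lambda a=2^{d-1}+1$, $\dim_k\Lambda a^2=2$, and the uniseriality, by hand. This is more verbose but also makes the dimension counts that the paper invokes completely explicit, and it isolates exactly the subtle point you flag at the end (that $a^2\in\Lambda a\cap\Lambda b$ contributes the extra word $b(ab)^{2^{d-2}-1}$, so that $\dim_k\Lambda a=2^{d-1}+1$ rather than $2^{d-1}$). The only cosmetic slip is the phrase ``right multiplication by $b$'' for the map $\lambda\mapsto\lambda ab$, which should of course be right multiplication by $ab$; the map itself and its kernel $\supseteq\Lambda a$ (using $a^2b=0$) are handled correctly.
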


\begin{proof}
We first prove the statements about the $\Lambda$-modules $Y_\Lambda$ and $L_a$. Because $b^2=0$, it follows that
$\Lambda b$ is a uniserial $\Lambda$-module and that $\mathrm{rad}(\Lambda b)=\Lambda ab$. In other words,
$L_a=\mathrm{rad}(Y_\Lambda)$, and hence $L_a$ is also a uniserial $\Lambda$-module. Considering the surjective 
$\Lambda$-module homomorphism $\Lambda \to \Lambda b$, which sends $1$ in $\Lambda$ to $b$, we see that 
$\Lambda b$ lies in the kernel. Comparing $k$-dimensions, it follows that  $\Lambda/\Lambda b\cong \Lambda b$.
Therefore,
$$L_a\cong\mathrm{rad}(\Lambda/\Lambda b)=\mathrm{rad}(\Lambda)/\Lambda b = (\Lambda a+\Lambda b)/\Lambda b
\cong \Lambda a/\left(\Lambda a\cap \Lambda b\right)=\Lambda a/ \Lambda a^2.$$ 
Considering the surjective $\Lambda$-module homomorphism $\Lambda \to \Lambda ab$, which sends 
$1$ in $\Lambda$ to $ab$, we see that $\Lambda a$ lies in the kernel since $a^2b=0$ in $\Lambda$. Comparing 
$k$-dimensions, it follows that  $\Lambda/\Lambda a\cong \Lambda ab$.

To realize the $k\SD$-modules $Y$ and $L$ as $\Lambda$-modules, we use that 
$$Y=k\,\SD/\langle y\rangle\cong k\SD(1+y)$$ 
and that there is a short exact sequence of $k\SD$-modules
$$0\to k\SD(1+y) \to k\SD \to k\,\SD/\langle y\rangle \to 0\;.$$
Since $f_{\SD}(b)=1+y$ by Lemma \ref{lem:kSD}, this implies that
$Y$ corresponds under $f_{\SD}$ to the $\Lambda$-module $\Lambda/\Lambda b\cong\Lambda b$, i.e. to $Y_\Lambda$.
Since $L=\mathrm{rad}(Y)$, this implies that $L$ corresponds to $L_a$.
\end{proof}

To identify the components of the stable Auslander-Reiten quiver of $k\SD$ to which
the indecomposable endo-trivial $k\SD$-modules belong, we use that the stable Auslander-Reiten quiver of $\Lambda_{\SD}$ is the same
as the Auslander-Reiten quiver of $\overline{\Lambda}_{\SD}=\Lambda_{\SD}/\mathrm{soc}(\Lambda_{\SD})$ in (\ref{eq:Lambdasoc}). 
Using the results from \cite[Sects. II.9.5 and II.10]{erd} together with Lemma \ref{lem:YLSD}, we obtain the following result.

\begin{lemma}
\label{cor:ARSD}
Let $\Theta$ be the component of the stable Auslander-Reiten quiver of $k\SD$ containing $[\Omega^1_{\SD}(k)]$. Then
$\Theta$ is a non-periodic component of type $\mathbb{Z}D_\infty$ such that $[L]$ belongs to $\Theta$.
Moreover, both $[\Omega^{-1}_{\SD}(k)]$ and $[L]$ each have exactly one predecessor in $\Theta$, given in
both cases by $[\mathrm{rad}(k\SD)/\mathrm{soc}(k\SD)]$. In other words, all isomorphism classes
of indecomposable endo-trivial $k\SD$-modules lie in the two components $\Theta$ and $\Omega(\Theta)$, and they lie in four 
$\Omega^2$-orbits of vertices at the ends of these components. 
\end{lemma}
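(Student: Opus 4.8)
The plan is to leverage Erdmann's classification of the stable Auslander-Reiten quiver of tame symmetric algebras of semidihedral type, combined with the explicit module-theoretic descriptions from Lemmas \ref{lem:kSD} and \ref{lem:YLSD}. First I would invoke \cite[Thm. III.1 and Sect. III.13]{erd}, and more specifically the detailed description of the stable AR quiver in \cite[Sects. II.9.5 and II.10]{erd}, to identify the shape of the component $\Theta$ containing $[\Omega^1_{\SD}(k)]$: since $k\SD$ has tame representation type and its non-periodic components are known to be of type $\mathbb{Z}D_\infty$, and since the trivial module (and hence all its syzygies) is non-periodic, $\Theta$ must be one of these $\mathbb{Z}D_\infty$ components. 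The key point is then to locate $[L]$ and the various endo-trivial modules \emph{within} $\Theta$ and $\Omega(\Theta)$.

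The main step is a direct computation of the almost split sequences ending at $\Omega^{-1}_{\SD}(k)$ and at $L$. Using Lemma \ref{lem:YLSD}, $L$ corresponds to the uniserial $\Lambda_{\SD}$-module $L_a = \Lambda/\Lambda a$ of length $2^{d-1}-1$, and I would compute the middle term of the almost split sequence terminating at $L_a$ by passing to $\overline{\Lambda}_{\SD} = \Lambda_{\SD}/\mathrm{soc}(\Lambda_{\SD})$, whose AR quiver coincides with the stable AR quiver of $k\SD$. The claim that both $\Omega^{-1}_{\SD}(k)$ and $L$ have a \emph{unique} predecessor, namely $[\mathrm{rad}(k\SD)/\mathrm{soc}(k\SD)]$, is exactly the statement that these modules sit at the two ``exceptional'' end vertices of a $\mathbb{Z}D_\infty$ component (the two vertices of the fork at the $D_\infty$-end), since a vertex with a single arrow into it in a $\mathbb{Z}D_\infty$ component must be one of these two end vertices. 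I would verify that $\mathrm{rad}(k\SD)/\mathrm{soc}(k\SD)$ is indecomposable (it is the ``heart'' of the projective indecomposable, non-uniserial in the semidihedral case, and Erdmann's tables record it precisely) and that the almost split sequences have the asserted form; Erdmann's explicit list in \cite[Sect. II.10]{erd} of the modules at the ends of the $\mathbb{Z}D_\infty$ components should give this directly once the socle scalars are matched via Lemma \ref{lem:kSD}.

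Having placed $[L]$ and $[\Omega^{-1}_{\SD}(k)]$ at the two end vertices of $\Theta$, the remaining endo-trivial modules are accounted for by applying $\Omega$: by Remark \ref{rem:endotrivSD}, every indecomposable endo-trivial $k\SD$-module is of the form $\Omega^i_{\SD}(k)$ or $\Omega^i_{\SD}(L)$ for some $i\in\mathbb{Z}$, and since $\Omega^2$ acts on the stable AR quiver, the $\Omega^2$-orbit of $k$ (resp.\ of $\Omega^{-1}(k)$, of $L$, of $\Omega(L)$) is one of the four orbits of end vertices, two in $\Theta$ and two in $\Omega(\Theta)$. Here I would use that $\Omega$ permutes the end vertices of $\mathbb{Z}D_\infty$ components and, since $[\Omega^1_{\SD}(L)]$ has order $2$ in $T(\SD)$ while $[\Omega^1_{\SD}(k)]$ has infinite order, $k$ and $L$ lie in distinct $\Omega^2$-orbits; combined with the fact that a $\mathbb{Z}D_\infty$ component has exactly two $\Omega$-orbits of end vertices, one checks that $\Theta$ and $\Omega(\Theta)$ are genuinely distinct components and together contain all four orbits.

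The hard part will be pinning down that the unique predecessor in each case is \emph{literally} $[\mathrm{rad}(k\SD)/\mathrm{soc}(k\SD)]$ rather than merely some module at the appropriate position in the quiver — this requires matching Erdmann's abstract combinatorial data with the concrete module via the isomorphism $f_{\SD}$, in particular checking that the socle scalars in \cite[Thm. III.1]{erd} are the ones realized by $r_a, r_b$, and then reading off from \cite[Sect. II.10]{erd} that the middle term of the relevant almost split sequences is (up to the heart of the projective) exactly this module. I expect the bookkeeping of socle scalars and the precise identification of the $D_\infty$-end structure to be the only genuinely delicate point; the rest follows formally from the structure of $\mathbb{Z}D_\infty$ components and the syzygy action on $T(\SD)$ established in Remark \ref{rem:endotrivSD}.
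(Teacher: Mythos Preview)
Your approach is essentially the same as the paper's: the paper also reduces to the Auslander--Reiten quiver of $\overline{\Lambda}_{\SD}=\Lambda_{\SD}/\mathrm{soc}(\Lambda_{\SD})$ and then invokes \cite[Sects.~II.9.5 and II.10]{erd} together with Lemma~\ref{lem:YLSD} (the identification $L\leftrightarrow L_a$) to read off the component type and the end-vertex placement. Your write-up simply unpacks in more detail what the paper records as a one-line citation.
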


\subsection{Location of the endo-trivial modules in the stable Auslander-Reiten quiver of $k\Q$}
\label{ss:Qchar2}

Let $d\ge 3$ be a fixed integer. Let $\Q$ be the quaternion group of order 8 given by (\ref{eq:quat8}) when $d=3$,
and let $\Q$ be the generalized quaternion group of order $2^d$ given by (\ref{eq:quat}) when $d\ge 4$. 
In \cite[Sect. 1]{Dade1}, Dade gave an explicit isomorphism between $k\Q$ and a certain quotient algebra of 
$k\langle a, b\rangle$, which in particular provides precise socle scalars. 
Note that if $d=3$ then Dade needed to assume that $k$ contains a primitive
cube root of unity for his isomorphism to work. 
Using Dade's explicit isomorphisms together with ideas from \cite[III.10]{erd}, 
we obtain the following lemma, which provides
precise socle scalars that match the descriptions in \cite[Thm. III.1 and Sect. III.13]{erd}.

\begin{lemma} 
\label{lem:kQ}
Let $\Lambda_{\Q} = k\langle a,b\rangle /I_{\Q}$, where
\begin{eqnarray}
\label{eq:idealQ}
I_{\Q}&=&\left((ab)^{2^{d-2}} - (ba)^{2^{d-2}} , a^2 - b(ab)^{2^{d-2}-1} - \delta\,(ab)^{2^{d-2}} , 
 \right.\\
&& \nonumber
\left.b^2- a(ba)^{2^{d-2}-1} - \delta\,(ab)^{2^{d-2}} , (ab)^{2^{d-2}} a\right)
\qquad\mbox{ and }\\
\nonumber
\delta&=&\left\{\begin{array}{c@{\quad\mbox{if}\quad}l}0&d=3\\1&d\ge 4\end{array}\right. .
\end{eqnarray}
If $d=3$, assume $k$ contains a primitive cube root of unity $\omega$, and define the following elements in the radical of $k\Q$:
\begin{eqnarray}
\label{eq:aQ8}
r_a&=& (1+x) + \omega\,(1+yx) + \omega^2\,(1+y),\\
\label{eq:bQ8}
r_b&=& (1+x) + \omega^2\,(1+yx) + \omega\,(1+y).
\end{eqnarray}
If $d\ge 4$, define the following elements in the radical of $k\Q$:
\begin{eqnarray}
\label{eq:r}
r&=&(yx+y)^{2^{d-1}-3} +\sum_{i=1}^{d-3} \left(yx+y\right)^{2^{d-2}-2^i},\\
\label{eq:aQ}
r_a&=& (1+yx + r)+ \left[(1+yx+r)(1+y+r)\right]^{2^{d-2}-1},\\
\label{eq:bQ}
r_b&=&\;(1+y \;+ r)+ \left[(1+yx+r)(1+y+r)\right]^{2^{d-2}-1}.
\end{eqnarray}
Then the map
\begin{equation}
\label{eq:fQ}
f_{\Q}:\Lambda_{\Q}\to k\Q\qquad\mbox{defined by $f_{\Q}(a)=r_a$, $f_{\Q}(b)=r_b$}
\end{equation}
induces a $k$-algebra isomorphism between $\Lambda_{\Q}$ and $k\Q$.
\end{lemma}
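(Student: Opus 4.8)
The plan is to mirror the proof of Lemma \ref{lem:kSD}: in each case I would show that $f_{\Q}$ is a well-defined surjective homomorphism of local Artinian $k$-algebras of common $k$-dimension $2^d$, which then forces $f_{\Q}$ to be an isomorphism. Surjectivity will be easy in both cases, since the images of $r_a$ and $r_b$ reduce to a spanning set of $\mathrm{rad}(k\Q)/\mathrm{rad}^2(k\Q)$ and $k\Q$ is local; likewise the dimension equality $\dim_k\Lambda_{\Q}=2^d=\dim_k k\Q$ follows from the defining ideal $I_{\Q}$ exactly as in the semidihedral case. So the real work is to check that $r_a$ and $r_b$ satisfy the four relations generating $I_{\Q}$.

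For $d=3$ (with $\omega\in k$ a primitive cube root of unity), I would first identify $r_a$ and $r_b$ with the radical generators used by Dade in \cite[Sect. 1]{Dade1}, so that his isomorphism onto $kQ_8$ is recovered. It then remains to verify, using the presentation $(\ref{eq:quat8})$ and $1+\omega+\omega^2=0$, that the relations for $a^2$ and $b^2$ hold with $\delta=0$, that $(r_ar_b)^2=(r_br_a)^2$ equals the trace element $\sum_{g\in Q_8}g$ spanning $\mathrm{soc}(kQ_8)$, and that $(r_ar_b)^2r_a=0$ because multiplying the socle by the radical is zero. These are short finite computations inside the $8$-dimensional algebra $kQ_8$.

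For $d\ge 4$, I would first record that $\mathrm{rad}^{2^{d-1}+1}(k\Q)=0$ and that $1+yx$, $1+y$ generate $\mathrm{rad}(k\Q)/\mathrm{rad}^2(k\Q)$. Since the smallest exponent occurring in the correction term $r$ of $(\ref{eq:r})$ is $2^{d-2}-2^{d-3}=2^{d-3}\ge 2$, we have $r\in\mathrm{rad}^2(k\Q)$ and hence $r_a\equiv 1+yx$, $r_b\equiv 1+y$ modulo $\mathrm{rad}^2(k\Q)$, which yields surjectivity as in Lemma \ref{lem:kSD}. The core step is then the explicit verification of the defining relations: I would compute the alternating products $(r_ar_b)^i$ inductively, using identities for $(1+y+r)(1+yx+r)(1+y+r)$ adapted to the quaternion relations $yxy^{-1}=x^{-1}$ and $y^2=x^{2^{d-2}}$, and then expand $r_a^2$ and $r_b^2$ as $\mathbb{F}_2$-linear combinations of group elements. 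Because $y^2=z\ne 1$ in $\Q$, the squares $r_a^2$ and $r_b^2$ are nonzero already in low radical degree, which is exactly why the relations for $a^2$ and $b^2$ pick up the extra socle term $\delta\,(ab)^{2^{d-2}}=(ab)^{2^{d-2}}$; the purpose of $r$ is to cancel the unwanted contributions in intermediate radical layers so that, after reduction, $r_a^2=r_b(r_ar_b)^{2^{d-2}-1}+(r_ar_b)^{2^{d-2}}$ and symmetrically $r_b^2=r_a(r_br_a)^{2^{d-2}-1}+(r_ar_b)^{2^{d-2}}$, with coefficients matching the socle scalars of \cite[Thm. III.1 and Sect. III.13]{erd}. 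As in the case $d=3$, $(r_ar_b)^{2^{d-2}}$ will turn out to be the trace element $\sum_{g\in\Q}g$ spanning $\mathrm{soc}(k\Q)$, so $(r_ar_b)^{2^{d-2}}r_a=0$, and the dimension count closes the argument.

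The main obstacle will be the bookkeeping in the $d\ge 4$ case: expanding $r_a^2$, $r_b^2$ and the products $(r_ar_b)^i$ explicitly and confirming that $r$ is chosen precisely so that every monomial outside the prescribed basis of the relevant radical layers cancels. Modulo $\mathrm{soc}(k\Q)$ this reduces to Dade's isomorphism in \cite[Sect. 1]{Dade1} together with the normalization in \cite[III.10]{erd}, so the genuinely new content — and the delicate part — is controlling the top radical layer in order to pin down the socle scalar $\delta$.
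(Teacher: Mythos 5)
The paper in fact gives no proof of this lemma at all: it simply states that the isomorphism is obtained by combining Dade's explicit isomorphism from \cite[Sect. 1]{Dade1} with the socle-scalar normalization of \cite[III.10]{erd}, in parallel to the Bondarenko--Drozd citation used before Lemma \ref{lem:kSD}. Your plan — check surjectivity via the images of $r_a$, $r_b$ in $\mathrm{rad}(k\Q)/\mathrm{rad}^2(k\Q)$, match dimensions $\dim_k\Lambda_{\Q}=2^d=\dim_k k\Q$, and verify the four relations by explicit expansion, treating Dade's isomorphism as what you get modulo the socle and the ``new'' content as the socle scalar — is therefore exactly a fleshed-out version of what the authors are implicitly doing, modeled on their own proof of Lemma \ref{lem:kSD}. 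So this is the same approach, just written out.

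One small imprecision in your heuristic is worth correcting, since it affects where you would look for the socle correction. You write that $y^2=z\ne 1$ ``is exactly why'' the relations for $a^2$ and $b^2$ pick up the socle term $\delta\,(ab)^{2^{d-2}}$. But $y^2=z\ne 1$ holds for every generalized quaternion group, including $Q_8$ where $\delta=0$; what $y^2=z$ actually explains is the contrast with the semidihedral case, namely that in $I_{\Q}$ both $a^2$ and $b^2$ have a nonzero leading term $b(ab)^{2^{d-2}-1}$, resp. $a(ba)^{2^{d-2}-1}$, whereas in $I_{\SD}$ one has $b^2=0$ because $(1+y)^2=0$ there. The value of $\delta$ is a finer, $d$-dependent effect in the one-dimensional top layer $\mathrm{rad}^{2^{d-1}}(k\Q)$ that cannot be read off from $y^2$ alone; this is precisely what you must compute, and the role of the correction term $r$ (and of Erdmann's normalization) is to control it. Also note that $(1+y)^2=1+z=(1+x)^{2^{d-2}}$ sits in the \emph{middle} radical layer $\mathrm{rad}^{2^{d-2}}$, not in ``low radical degree,'' and the intended value $(r_ar_b)^{2^{d-2}}=\sum_{g\in \Q}g$ is a nonzero-scalar claim you would still need to verify rather than assume. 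None of this changes the viability of the plan — you already flag pinning down $\delta$ as the delicate step — but it should reorient where the genuine bookkeeping lives.
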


The following lemma realizes the $k\Q$-modules in the $\Omega$-orbit of $L$ from Remarks
\ref{rem:endotrivQ8}(b) and \ref{rem:endotrivQ} as $\Lambda_{\Q}$-modules.
Since all indecomposable endo-trivial $k\Q$-modules are in the
$\Omega$-orbit of either $L$ or of the trivial simple $k\Q$-module $k$, this provides descriptions
of all $\Lambda_{\Q}$-modules corresponding to indecomposable endo-trivial $k\Q$-modules.

\begin{lemma}
\label{lem:LQ}
Write $\Lambda=\Lambda_{\Q}$, and let $f_{\Q}:\Lambda\to k\Q$ be the $k$-algebra isomorphism from Lemma $\ref{lem:kQ}$, where we assume that $k$ contains a primitive cube root of unity $\omega$ when $d=3$. 
Define the following $\Lambda$-modules:
\begin{equation}
\label{eq:LabQ}
L_a=\Lambda ab\quad\mbox{and}\quad L_b=\Lambda ba.
\end{equation}
Then $L_a\cong \Lambda/\Lambda a$ and $L_b\cong \Lambda/\Lambda b$
and they are both uniserial $\Lambda$-modules of length $2^{d-1}-1$ whose stable endomorphism rings are isomorphic to $k$. 
The $\Omega$-orbit of $L_a$ is as follows:
\begin{equation}
\label{eq:OmegaLa}
\Omega^1_{\Lambda}(L_a) \cong \Lambda a\cong\Lambda/\Lambda ba;\quad 
\Omega^2_{\Lambda}(L_a)\cong L_b;
\quad
\Omega^3_{\Lambda}(L_a) \cong \Lambda b\cong\Lambda/\Lambda ab;\quad 
\Omega^4_{\Lambda}(L_a)\cong L_a.
\end{equation}
Moreover, $L_a$ and $L_b$ lie at the end of a $2$-tube in the stable Auslander-Reiten quiver of $\Lambda$.
The endo-trivial $k\Q$-module $L$ from Remarks $\ref{rem:endotrivQ8}(b)$ and $\ref{rem:endotrivQ}$
corresponds under $f_{\Q}$ to either $L_a$ or $L_b$, and the $\Omega$-orbit of $L$ corresponds to the
$\Omega$-orbit of $L_a$.
\end{lemma}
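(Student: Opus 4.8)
The plan is to argue entirely inside $\Lambda=\Lambda_{\Q}=k\langle a,b\rangle/I_{\Q}$ and then transport conclusions to $k\Q$ via the isomorphism $f_{\Q}$ of Lemma \ref{lem:kQ}. First observe that the ideal $I_{\Q}$ is invariant under exchanging $a$ and $b$: the first and last generators in \eqref{eq:idealQ} are visibly symmetric, and the second and third are interchanged once the relation $(ab)^{2^{d-2}}=(ba)^{2^{d-2}}$ is imposed. Hence $\Lambda$ admits a $k$-algebra automorphism $\iota$ with $\iota(a)=b$ and $\iota(b)=a$; it carries $\Lambda a\leftrightarrow\Lambda b$, $\Lambda ab\leftrightarrow\Lambda ba$, $\Lambda ba\leftrightarrow\Lambda ab$, so throughout it suffices to treat the $a$-side and then apply $\iota$. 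By Lemma \ref{lem:kQ} the algebra $\Lambda$ is symmetric and local of dimension $2^d$, with Loewy length $2^{d-1}+1$, with $k$-basis the alternating words in $a,b$ of length at most $2^{d-1}$ (where $(ab)^{2^{d-2}}=(ba)^{2^{d-2}}$ spans $\mathrm{soc}(\Lambda)$), and with every word of length $>2^{d-1}$ equal to $0$. The key computational fact is that $a^2b=ba^2=ab^2=b^2a=0$: substituting the relation for $a^2$ or $b^2$ turns each of these into a $k$-linear combination of words of length $>2^{d-1}$, hence $0$.

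Using these relations, an induction on the radical series shows that $\mathrm{rad}^i(\Lambda ab)$ is cyclic, generated by a single alternating word of length $i+2$ for $0\le i\le 2^{d-1}-2$ (at each step left multiplication by one of $a,b$ extends the generating word, while the other yields a word of length $>2^{d-1}$ or a product of $\mathrm{soc}(\Lambda)$ with $\mathrm{rad}(\Lambda)$); so $\Lambda ab$, and likewise $\Lambda ba$, is uniserial of composition length $2^{d-1}-1$. A direct enumeration of the words $wa$ (for $w$ an alternating word) gives $\dim_k\Lambda a=2^{d-1}+1$: the words $w$ not ending in $a$ produce all alternating words ending in $a$ together with $\mathrm{soc}(\Lambda)$, and those ending in $a$ contribute only the one extra word $b(ab)^{2^{d-2}-1}$ arising from $a^2$; by $\iota$, $\dim_k\Lambda b=2^{d-1}+1$ as well. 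Now the surjection $\Lambda\to\Lambda ab$, $\lambda\mapsto\lambda ab$, annihilates $\Lambda a$ because $a^2b=0$, and comparing dimensions forces its kernel to be exactly $\Lambda a$; thus $L_a=\Lambda ab\cong\Lambda/\Lambda a$ and, by $\iota$, $L_b\cong\Lambda/\Lambda b$. Similarly $\lambda\mapsto\lambda a$ has kernel $\mathrm{ann}_{\Lambda}(a)\supseteq\Lambda ba$ (because $ba^2=0$), and dimensions again force equality, so $\Lambda a\cong\Lambda/\Lambda ba$ and $\Lambda b\cong\Lambda/\Lambda ab$. Since $\Lambda$ is local it is the projective cover of each cyclic module, so \eqref{eq:OmegaLa} is immediate: $\Omega^1_{\Lambda}(L_a)=\Omega^1_{\Lambda}(\Lambda/\Lambda a)=\Lambda a$, $\Omega^2_{\Lambda}(L_a)=\Omega^1_{\Lambda}(\Lambda/\Lambda ba)=\Lambda ba=L_b$, then $\Lambda b$, then $\Lambda ab=L_a$. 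Finally, $L_a$ is uniserial with all composition factors simple, so $\mathrm{End}_{\Lambda}(L_a)$ is local with residue field $k$ and its radical is generated by the endomorphism $\theta$ given by multiplication by the image of $ab$ in $L_a=\Lambda/\Lambda a$; since $a^2b=0$, the assignment $1+\Lambda a\mapsto ab$ defines a $\Lambda$-map $L_a\to\Lambda$ whose composite with the projection $\Lambda\to\Lambda/\Lambda a=L_a$ equals $\theta$, so $\theta$, and hence every non-isomorphism in $\mathrm{End}_{\Lambda}(L_a)$, factors through a projective. Therefore $\underline{\mathrm{End}}_{\Lambda}(L_a)\cong k$, and by $\iota$ the same holds for $L_b$.

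For the Auslander--Reiten statement I would use that $f_{\Q}$ identifies the stable AR quiver of $\Lambda$ with that of $k\Q$, normalized so that the socle scalars match \cite[Thm.\ III.1 and Sect.\ III.13]{erd}. Since $\Lambda$ is symmetric, $\tau\cong\Omega^2_{\Lambda}$, so $\tau L_a=L_b$ and $\tau L_b=L_a$; moreover $L_a\not\cong L_b$, since otherwise, viewing $L_a$ as the endo-trivial $k\Q$-module identified below, one would get $[L_a]=[\Omega^2_{\Q}(L_a)]$, i.e.\ $2[\Omega^1_{\Q}(k)]=0$ in $T(\Q)$, contradicting Remark \ref{rem:endotrivQ}. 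Hence $L_a$ and $L_b$ form a $\tau$-orbit of size exactly $2$ and therefore lie at the mouth of a tube of rank $2$, the rank and the mouth position being read off from Erdmann's classification of the stable AR quivers of quaternion-type algebras (cf.\ \cite[Sect.\ III.13]{erd} and the proof of Lemma \ref{cor:ARSD}). To identify $L$: regarded as a $k\Q$-module via $f_{\Q}$, $L_a$ is indecomposable of $k$-dimension $2^{d-1}-1$ with stable endomorphism ring $k$, hence endo-trivial by \cite{carl1} (as $\Q$ is a $2$-group); by Remark \ref{rem:endotrivQ}, comparing $k$-dimensions with the members of the $\Omega$-orbit of the trivial module $k$, it lies in the $\Omega$-orbit of $L$, so the $\Omega$-orbit of $L$ corresponds under $f_{\Q}$ to the $\Omega$-orbit of $L_a$; since $L$ has $k$-dimension $2^{d-1}-1$ while $\Lambda a$ and $\Lambda b$ have $k$-dimension $2^{d-1}+1$, $L$ corresponds to $L_a$ or to $L_b$. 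The main obstacle will be the bookkeeping with alternating words needed to establish the uniserial structure of $\Lambda ab$ and the equality $\dim_k\Lambda a=2^{d-1}+1$ uniformly in $d$, together with matching Erdmann's normalization of the stable AR components precisely enough to place $L_a$ and $L_b$ at a mouth.
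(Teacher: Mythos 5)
Your overall structure closely parallels the paper's: use the $a\leftrightarrow b$ symmetry, deduce $a^2b=ba^2=0$, establish the uniserial structure, compare dimensions to get the isomorphisms $L_a\cong\Lambda/\Lambda a$, $\Lambda a\cong\Lambda/\Lambda ba$, read off the $\Omega$-orbit from the cyclic presentations, show $\underline{\mathrm{End}}_\Lambda(L_a)\cong k$ by exhibiting that the radical of $\mathrm{End}$ factors through $\Lambda$, and match $L$ with $L_a$ or $L_b$ by dimension. All of that is sound.

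The gap is in the claim that $L_a$ and $L_b$ lie \emph{at the mouth} of a $2$-tube. You argue that $L_a\not\cong L_b$ (via $T(\Q)$, which is a nice touch the paper leaves implicit) and then assert that a $\tau$-orbit of size exactly $2$ ``therefore lies at the mouth of a tube of rank $2$, ... read off from Erdmann's classification.'' This inference is not valid: in a tube $\mathbb{Z}A_\infty/\langle\tau^2\rangle$, \emph{every} vertex has $\tau$-orbit of size $2$, so knowing the orbit size fixes the rank of the tube but says nothing about which row $L_a$ sits in. The appeal to Erdmann's classification and ``the proof of Lemma \ref{cor:ARSD}'' is not developed enough to do this work (and that lemma concerns $k\SD$, whose components are of type $\mathbb{Z}D_\infty$, a different situation). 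What is actually needed is an argument that the almost split sequence terminating at $L_a$ has indecomposable middle term. This is exactly what the paper supplies: it computes $\mathrm{Ext}^1_\Lambda(L_a,L_b)\cong\underline{\mathrm{Hom}}_\Lambda(\Lambda a,\Lambda ba)\cong k$ by exhibiting an explicit basis of $\mathrm{Hom}_\Lambda(\Lambda a,\Lambda ba)$ and showing only one basis element survives in the stable category, and then writes down the unique nontrivial extension $M_{ab}$ of $L_a$ by $L_b$ and verifies it is indecomposable. Since the AR sequence is a nonzero element of the one-dimensional $\mathrm{Ext}^1$, its middle term must be $M_{ab}$, which forces $L_a$ to have a unique predecessor, i.e.\ to be at the mouth. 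You would need to supply this $\mathrm{Ext}^1$ computation (or an equivalent identification of the AR sequence) to close the argument; the rest of your proof can stand as written.
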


\begin{proof}
We first prove the statements about the $\Lambda$-modules $L_a$ and $L_b$. Because of the symmetry of the relations
in $\Lambda=\Lambda_{\Q}$ in $a$ and $b$, it suffices to consider $L_a=\Lambda ab$. Since $a^2b=0$ in $\Lambda$, it follows
that $\Lambda ab$ is a uniserial $\Lambda$-module of length $2^{d-1}-1$. Considering the surjective $\Lambda$-module 
homomorphism $\Lambda \to \Lambda ab$, which sends $1$ in $\Lambda$ to $ab$, we see that $\Lambda a$ lies in the kernel.
Comparing $k$-dimensions, it follows that  $\Lambda/\Lambda a\cong \Lambda ab$. On the other hand, considering the
surjective $\Lambda$-module homomorphism $\Lambda \to \Lambda a$, which sends $1$ in $\Lambda$ to $a$, we see that 
$\Lambda ba$ lies in the kernel since $ba^2=0$ in $\Lambda$. Comparing $k$-dimensions, it follows that  
$\Lambda/\Lambda ba\cong \Lambda a$. This proves in particular (\ref{eq:OmegaLa}). 

Because of the $\Lambda$-module structure of $L_a=\Lambda ab$, we see that its endomorphism ring $\mathrm{End}_\Lambda(L_a)$ 
has $k$-dimension $2^{d-2}$, with a $k$-basis given by the endomorphisms 
$g_{a,i}:L_a\to L_a$, with $g_{a,i}(ab) =(ab)^{i}$, for  $1\le i\le 2^{d-2}$.
Since $g_{a,1}$ is the identity map and $g_{a,i}$
factors through $\Lambda$ for $2\le i \le 2^{d-2}$, it follows that the stable endomorphism ring 
$\underline{\mathrm{End}}_\Lambda(L_a)$ is isomorphic to $k$. 

By (\ref{eq:OmegaLa}), $L_a$ and $L_b$ lie in a 2-tube of the stable Auslander-Reiten quiver of $\Lambda$.
Note that 
$$\mathrm{Ext}^1_\Lambda(L_a,L_b)\cong
\underline{\mathrm{Hom}}_\Lambda(\Omega^1_\Lambda(L_a),L_b)\cong
\underline{\mathrm{Hom}}_\Lambda(\Lambda a,\Lambda ba)\cong k.$$
This can be seen by noticing that a $k$-basis of $\mathrm{Hom}_\Lambda(\Lambda a,\Lambda ba)$ is given by the homomorphisms
$g_{ab,i}:\Lambda a\to \Lambda ba$, with $g_{ab,i}(a) =a(ba)^{i}$, for $1\le i \le 2^{d-2}-1$,
together with $g_{ab,0}:\Lambda a\to \Lambda ba$, with $g_{ab,0}(a)= (ba)^{2^{d-2}}$. Since $g_{ab,0}$ does not
factor through a projective $\Lambda$-module and  $g_{ab,i}$ factors through $\Lambda$ for $1\le i\le 2^{d-2}-1$, it follows
that $\mathrm{Ext}^1_\Lambda(L_a,L_b)\cong k$. One non-zero extension of $L_a$ by $L_b$ is given by 
the indecomposable $\Lambda$-module
$$M_{ab}=\frac{\left(\Lambda/(\Lambda ba+\Lambda a^2)\right)\oplus \left(\Lambda/\Lambda b\right)}{k\,(a+(\Lambda ba+\Lambda a^2),(ba)^{2^{d-2}-1}+\Lambda b)}\;.$$
Therefore, there must exist an almost split sequence of the form
$$0\to L_b \to M_{ab}\to L_a\to 0.$$
Since $M_{ab}$ is indecomposable, it follows that $L_a$ and $L_b$ lie at the end of a 2-tube.

To realize the endo-trivial $k\Q$-module $L$ as a $\Lambda$-module, we use (\ref{eq:OmegaLa}) and that we know the
$k$-dimensions of the indecomposable endo-trivial $k\Q$-modules by Remarks \ref{rem:endotrivQ8}(b) and \ref{rem:endotrivQ}. Namely, the
$k$-dimensions of the $k\Q$-modules in the $\Omega$-orbit of the trivial $k\Q$-module $k$ are
$$\mathrm{dim}_k(k)=1;\quad \mathrm{dim}_k \,\Omega^1_{\Q}(k)=2^d-1;\quad
\mathrm{dim}_k \,\Omega^2_{\Q}(k)=2^d+1; \quad\mathrm{dim}_k \,\Omega^3_{\Q}(k)=2^d-1.$$
On the other hand, the $k$-dimensions of the $\Lambda$-modules in the $\Omega$-orbit of $L_a$ are by (\ref{eq:OmegaLa})
$$\mathrm{dim}_k(L_a)=2^{d-1}-1=\mathrm{dim}_k \,\Omega^2_\Lambda(L_a);\quad 
\mathrm{dim}_k \,\Omega^1_\Lambda(L_a)=2^{d-1}+1=\mathrm{dim}_k \,\Omega^3_\Lambda(L_a).$$
Therefore, $L$ corresponds under $f_{\Q}$ to either $L_a$ and $L_b$, and hence the $\Omega$-orbit of $L$ corresponds to the
$\Omega$-orbit of $L_a$.
\end{proof}

Finally, we identify the components of the stable Auslander-Reiten quiver of $k\Q$ to which  the
indecomposable endo-trivial $k\Q$-modules belong. Using Remark \ref{rem:endotrivQ8}(a),
\cite[Lemma IV.1.10]{erd} and Lemma \ref{lem:LQ}, we obtain the following result.

\begin{lemma}
\label{cor:ARQ}
If $d=3$ and $k$ does not contain a primitive cube root of unity, then the 4 isomorphism classes of 
indecomposable endo-trivial $k\Q$-modules lie at the ends of two 2-tubes of
the stable Auslander-Reiten quiver of $k\Q$, given by
$$\{[k],[\Omega^2_{\Q}(k)]\};\quad\{[\Omega^{-1}_{\Q}(k)],[\Omega^1_{\Q}(k)]\}.$$

Otherwise, there are 8 isomorphism classes of indecomposable
endo-trivial $k\Q$-modules, and they lie at the ends of four 2-tubes of
the stable Auslander-Reiten quiver of $k\Q$, given by
$$\{[k],[\Omega^2_{\Q}(k)]\};\quad\{[\Omega^{-1}_{\Q}(k)],[\Omega^1_{\Q}(k)]\};\quad 
\{[L],[\Omega^2_{\Q}(L)]\};\quad\{[\Omega^{-1}_{\Q}(L)],[\Omega^1_{\Q}(L)]\}.$$
\end{lemma}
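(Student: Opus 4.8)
The plan is to combine the group-theoretic description of $T(\Q)$ from Remarks~\ref{rem:endotrivQ8} and~\ref{rem:endotrivQ} with the module-theoretic facts about rank-$2$ tubes supplied by Lemma~\ref{lem:LQ} and \cite[Lemma IV.1.10]{erd}, keeping in mind that $k\Q$ is a symmetric $k$-algebra, so that its stable Auslander--Reiten translate is $\tau=\Omega^2$, and that $\Q$ has periodic cohomology of period $4$, so that $\Omega^4$ is the identity on the stable category, $\tau^2=\mathrm{id}$, and every component of the stable Auslander--Reiten quiver of $k\Q$ is a tube of rank $1$ or $2$.

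First I would list the indecomposable endo-trivial $k\Q$-modules. If $d=3$ and $k$ has no primitive cube root of unity, Remark~\ref{rem:endotrivQ8}(a) gives $T(\Q)=\langle[\Omega^1_\Q(k)]\rangle\cong\mathbb{Z}/4$, so these are exactly $k,\Omega^1_\Q(k),\Omega^2_\Q(k),\Omega^{-1}_\Q(k)$, pairwise non-isomorphic since $[\Omega^1_\Q(k)]$ has order $4$. In every other case, Remark~\ref{rem:endotrivQ8}(b) (for $d=3$) or Remark~\ref{rem:endotrivQ} (for $d\ge4$) gives $T(\Q)\cong\mathbb{Z}/4\oplus\mathbb{Z}/2$ generated by $[\Omega^1_\Q(k)]$ of order $4$ and $[\Omega^1_\Q(L)]$ of order $2$, the latter not lying in $\langle[\Omega^1_\Q(k)]\rangle$; hence the indecomposable endo-trivial modules are precisely the eight modules $\Omega^i_\Q(k)$ and $\Omega^i_\Q(L)$ with $i\in\{0,1,2,3\}$, which are pairwise non-isomorphic and, more precisely, have pairwise distinct $k$-dimensions within each $\Omega$-orbit and disjoint sets of $k$-dimensions across the two orbits, since $\dim_k\Omega^i_\Q(k)\in\{1,\,2^d-1,\,2^d+1,\,2^d-1\}$ while $\dim_k\Omega^i_\Q(L)\in\{2^{d-1}-1,\,2^{d-1}+1,\,2^{d-1}-1,\,2^{d-1}+1\}$ for $i=0,1,2,3$.

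Next I would locate these modules in the stable Auslander--Reiten quiver. Since $\Omega$ is a self-equivalence of the stable module category of $k\Q$, it induces an automorphism of the stable Auslander--Reiten quiver; in particular it permutes the components, sends a $2$-tube to a $2$-tube, and carries the $\tau$-orbit at the mouth of a $2$-tube onto the $\tau$-orbit at the mouth of its image. By \cite[Lemma IV.1.10]{erd}, the modules $k$ and $\tau k=\Omega^2_\Q(k)$ form the $\tau$-orbit at the end of a $2$-tube $\Theta_1$; applying $\Omega$, the modules $\Omega^1_\Q(k)$ and $\Omega^{-1}_\Q(k)$ form the $\tau$-orbit at the end of the $2$-tube $\Theta_2=\Omega(\Theta_1)$, and $\Theta_1\ne\Theta_2$ because a rank-$2$ tube has a single mouth $\tau$-orbit and $k\not\cong\Omega^{\pm1}_\Q(k)$. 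When $L$ exists, Lemma~\ref{lem:LQ} (via the isomorphism $f_\Q$) gives in exactly the same fashion a $2$-tube $\Theta_3$ whose mouth $\tau$-orbit is $\{L,\Omega^2_\Q(L)\}$ and a $2$-tube $\Theta_4=\Omega(\Theta_3)\ne\Theta_3$ whose mouth $\tau$-orbit is $\{\Omega^1_\Q(L),\Omega^{-1}_\Q(L)\}$. Finally $\{\Theta_1,\Theta_2\}$ and $\{\Theta_3,\Theta_4\}$ are disjoint because the mouths of $\Theta_1,\Theta_2$ consist of modules in the $\Omega$-orbit of $k$ whereas those of $\Theta_3,\Theta_4$ consist of modules in the $\Omega$-orbit of $L$, and these orbits are disjoint by the dimension count above. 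This gives the two $2$-tubes in the first case and the four $2$-tubes in the second, which is the assertion.

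The one place I expect to need care is the input from \cite[Lemma IV.1.10]{erd}: one must make sure the cited result genuinely places the trivial simple module $k$ (equivalently, under $f_\Q$, the simple $\Lambda_\Q$-module) and $\Omega^2_\Q(k)$ at the mouth of a rank-$2$ tube. If one prefers a self-contained argument, this is the exact analogue for $k$ of the computation done for $L_a,L_b$ in Lemma~\ref{lem:LQ}: realize $k$ as $\Lambda_\Q/\mathrm{rad}(\Lambda_\Q)$, identify $\Omega^1_{\Lambda_\Q}(k)=\mathrm{rad}(\Lambda_\Q)$ and the almost split sequence ending in $k$, and verify that its middle term is indecomposable. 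The remaining steps are bookkeeping with the group $T(\Q)$ and with the elementary facts that $\tau=\Omega^2$ and $\tau^2=\mathrm{id}$ for the symmetric algebra $k\Q$.
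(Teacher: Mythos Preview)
Your proposal is correct and follows the same route as the paper, which merely cites Remark~\ref{rem:endotrivQ8}(a), \cite[Lemma IV.1.10]{erd} and Lemma~\ref{lem:LQ} without further detail; you have simply fleshed out how these ingredients combine. One small slip: your parenthetical claim that the eight modules ``have pairwise distinct $k$-dimensions within each $\Omega$-orbit'' is false, since $\dim_k\Omega^1_{\Q}(k)=\dim_k\Omega^{-1}_{\Q}(k)=2^d-1$ and $\dim_kL=\dim_k\Omega^2_{\Q}(L)=2^{d-1}-1$. This does not affect your argument, because pairwise non-isomorphism already follows from $|T(\Q)|=8$, and the separation of the four tubes only uses that the \emph{sets} of dimensions in the two $\Omega$-orbits are disjoint, which is true.
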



\end{document}